\newcommand{\QQ}{\mathbb{Q}}
\newcommand{\ZZ}{\mathbb{Z}}
\renewcommand{\AA}{\mathbb{A}}
\newcommand{\PP}{\mathbb{P}}
\newcommand{\FF}{\mathbb{F}}
\newcommand{\GG}{\mathbb{G}}
\newcommand{\OO}{\mathcal{O}}
\newcommand{\C}{\mathcal{C}}
\newcommand{\A}{\mathcal{A}}
\newcommand{\G}{\mathcal{G}}
\newcommand{\B}{\mathcal{B}}
\newcommand{\cA}{\mathcal{A}}
\newcommand{\cY}{\mathcal{Y}}
\newcommand{\cX}{\mathcal{X}}
\newcommand{\cZ}{\mathcal{Z}}
\newcommand{\cD}{\mathcal{D}}
\newcommand{\cO}{\mathcal{O}}
\newcommand{\cW}{\mathcal{W}}
\newcommand{\fm}{\mathfrak{m}}
\newcommand{\sA}{\mathscr{A}}
\newcommand{\sY}{\mathscr{Y}}
\newcommand{\sX}{\mathscr{X}}
\newcommand{\mN}{\mathrm{N}}
\newtheorem{thm}{Theorem}[section]
\newtheorem{cor}[thm]{Corollary}
\newtheorem{conj}[thm]{Conjecture}
\newtheorem{lem}[thm]{Lemma}
\newtheorem{prop}[thm]{Proposition}
\theoremstyle{definition}
\newtheorem{define}[thm]{Definition}
\newtheorem{const}[thm]{Construction}
\theoremstyle{remark}
\newtheorem{rem}[thm]{Remark}
\newtheorem{example}[thm]{Example}
\DeclareMathOperator{\Ker}{Ker}
\DeclareMathOperator{\Gal}{Gal}
\DeclareMathOperator{\Pic}{Pic}
\DeclareMathOperator{\Br}{Br}
\DeclareMathOperator{\loc}{loc}
\DeclareMathOperator{\spec}{spec}
\DeclareMathOperator{\divv}{div}
\DeclareMathOperator{\ev}{ev}
\DeclareMathOperator{\Hom}{Hom}
\DeclareMathOperator{\un}{un}
\DeclareMathOperator{\inv}{inv}
\DeclareMathOperator{\Iso}{Iso}
\DeclareMathOperator{\val}{val}
\DeclareMathOperator{\Sel}{Sel}
\DeclareMathOperator{\Frob}{Frob}
\DeclareMathOperator{\Kum}{Kum}
\DeclareMathOperator{\CT}{CT}
\DeclareMathOperator{\res}{res}
\DeclareMathOperator{\disc}{disc}
\DeclareMathOperator{\tor}{tor}
\DeclareMathOperator{\Aff}{Aff}
\DeclareMathOperator{\Mod}{mod}
\def\alp{{\alpha}}
\def\bet{{\beta}}
\def\gam{{\gamma}}
\def\del{{\delta}}
\def\eps{{\varepsilon}}
\def\lam{{\lambda}}
\def\sig{{\sigma}}
\def\vphi{{\varphi}}
\def\Om{{\Omega}}
\def\Gam{{\Gamma}}
\def\vphi{{\varphi}}
\def\div{\divv}
\def\lrar{\longrightarrow}
\def\hrar{\hookrightarrow}
\def\x{\stackrel}
\def\ovl{\overline}
\def\wtl{\widetilde}
\def\bksl{\;\backslash\;}
\DeclareTextFontCommand{\textcyr}{\fontencoding{OT2}\fontfamily{wncyr}\fontseries{m}\fontshape{n}\selectfont}
\newcommand{\Sha}{\textcyr{Sh}}
\DeclareFontFamily{OT1}{mathc}{}
\DeclareFontShape{OT1}{mathc}{m}{n}{ <-> mathc10 }{}
\DeclareRobustCommand\xmcal[1]{\text{\usefont{OT1}{mathc}{m}{n}#1}}
\title{Second descent and rational points on Kummer varieties}
\author{Yonatan Harpaz }
\begin{document}
\maketitle

\begin{abstract}
A powerful method pioneered by Swinnerton-Dyer allows one to study rational points on pencils of curves of genus 1 by combining the fibration method with a sophisticated form of descent. A variant of this method, first used by Skorobogatov and Swinnerton-Dyer in 2005, can be applied to study rational points on Kummer varieties. In this paper we extend the method to include an additional step of second descent. Assuming finiteness of the relevant Tate-Shafarevich groups, we use the extended method to show that the Brauer-Manin obstruction is the only obstruction to the Hasse principle on Kummer varieties associated to abelian varieties with all rational 2-torsion, under mild additional hypotheses. 
\end{abstract}

\tableofcontents

\section{Introduction}\label{s:intro}

Let $k$ be a number field. A fundamental problem in Diophantine geometry is to determine for which geometric classes of smooth, proper and simply connected varieties over $k$, the Brauer-Manin obstruction is the only obstruction to the existence of a rational point. A geometric class which is expected to exhibit an extremely favorable behavior with respect to this question is the class of \textbf{rationally connected varieties}. Such varieties are always simply connected, and a conjecture of Colliot-Th\'el\`ene (\cite{CT01}) predicts that the set of rational points of a smooth, proper, rationally connected variety $X$ over $k$ is dense in the Brauer set of $X$. While this conjecture is still open, it has been established in many special cases. On the other extreme are simply-connected varieties of general type. For this class Lang's conjecture asserts that rational points are not Zariski dense, and their existence is not expected to be controlled at all by the Brauer-Manin obstruction (see \cite{SW95} and \cite{Sm14} for two kinds of conditional counter-examples). 

An intermediate class whose arithmetic is still quite mysterious is the class of simply connected Calabi-Yau varieties. In dimension $2$, these varieties are also known as \textbf{K3 surfacecs}. A conjecture far less documented than the two conjectures above predicts that the Brauer-Manin
obstruction is the only obstruction to the existence of rational points on K3 surfaces (see \cite[p. 77]{Sk09} and \cite[p. 484]{SZ}). The only evidence towards this conjecture is conditional, and relies on a method invented by Swinnerton-Dyer in~\cite{SD95}. In the realm of K3 surfaces there are two cases in which this method has been applied. The first case is when the K3 surface in question admits a fibration into curves of genus $1$ (see \cite{CTSSD98},\cite{SD00},\cite{CT01},\cite{Wi07}). In this case Swinnerton-Dyer's method depends on two big conjectures: the finiteness of Tate-Shafarevich groups of elliptic curves, and Schinzel's hypothesis. The second case is that of \textbf{Kummer surfaces} (\cite{SSD},\cite{HS15}). In this case the method does not require Schinzel's hypothesis (using, in effect, the only known case of the hypothesis, which is covered by Dirichlet's theorem), but only the Tate-Shafarevich conjecture.

Recall that a Kummer surface over $k$ is a K3 surface which is associated to a \textbf{$2$-covering} $Y$ of an abelian surface $A$, by which we mean a torsor under $A$ equipped with a map of torsors $p:Y \lrar A$ which covers the multiplication-by-$2$ map $A \lrar A$ (and so, in particular, $p$ is finite étale of degree $2^g$). The data of such a map is equivalent to the data to a lift of the class $[Y] \in H^1(k,A)$ to a class $\alp \in H^1(k,A[2])$. The antipodal involution $\iota_A=[-1]:A\lrar A$ then induces an involution $\iota_Y: Y \lrar Y$ and one defines the \textbf{Kummer surface} $X = \Kum(Y)$ as the minimal desingularisation of $Y/\iota_Y$. We note that the Kummer surface $X$ does not determine $A$ and $Y$ up to isomorphism over $k$. Indeed, for a quadratic extension $F/k$ one may consider the \textbf{quadratic twists} $A^F$ and $Y^F$ with respect to the $\ZZ/2$-actions given by $\iota_A$ and $\iota_Y$. We may then consider $Y^F$ as a torsor for $A^F$ determined by the same class $\alpha\in H^1(k,A^F[2])=H^1(k,A[2])$ and for every such $F/k$ we have a natural isomorphism $\Kum(Y^F) \cong \Kum(Y)$. The collection of quadratic twists $A^F$ can be organized into a fibration $\sA := (A \times \GG_m)/\mu_2 \lrar \GG_m/\mu_2 \cong \GG_m$, where $\mu_2$ acts diagonally by $(\iota_A,-1)$. In particular, for a point $t \in k^* = \GG_m(k)$, the fiber $\sA_t$ is naturally isomorphic to the quadratic twist $A^{k(\sqrt{t})}$. Similarly, we may organize the quadratic twists of $Y$ into a pencil $\sY \lrar \GG_m$ with $\sY_t \cong  Y^{F(\sqrt{t})}$. We may then consider the \textbf{entire family} $\sA_t$ as the family of abelian surfaces associated to $X$, and similarly the family $\sY_t$ as the family of $2$-coverings associated to $X$. 

When applying Swinnerton-Dyer method to a Kummer surface $X$, one typically assumes the finiteness of the $2$-primary part of the Tate-Shafarevich groups for all abelian surfaces associated to $X$ (in the above sense). We note that the finiteness of the $2$-primary part of $\sA_t$ is equivalent to the statement that the Brauer-Manin obstruction to the Hasse principle is the only one for any $2$-covering of $\sA_t$. In fact, to make the method work it is actually sufficient to assume that the Brauer-Manin obstruction is the only one for all the $\sY_t$ (as apposed to all $2$-coverings of all the $\sA_t$). Equivalently, one just needs to assume that $[\sY_t] \in H^1(k,\sA_t)$ is not a non-trivial divisible element of $\Sha(\sA_t)$ for any $t$. We may consequently consider a successful application of Swinnerton-Dyer's method to Kummer surfaces as establishing, \textbf{unconditionally}, cases of the following conjecture:
\begin{conj}\label{c:kummer}
Let $X$ be a Kummer surface over $k$. If the ($2$-primary part of) Brauer-Manin obstruction to the Hasse principle is the only one for all $2$-coverings associated to $X$, then the same holds for $X$.
\end{conj}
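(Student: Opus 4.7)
The plan is to prove this conjecture under the paper's additional hypotheses (all $2$-torsion of $A$ rational over $k$, finiteness of $\Sha(\sA_t)[2^\infty]$, and the mild conditions alluded to in the abstract) by refining the Swinnerton-Dyer method of Skorobogatov and Swinnerton-Dyer \cite{SSD,HS15} with an additional second descent step. Start from a Brauer--Manin adelic point $(x_v) \in X(\AA_k)^{\Br}$ and aim to produce a twist parameter $t \in k^* = \GG_m(k)$ such that the associated $2$-covering $\sY_t$ of $\sA_t$ admits a rational point; since $\Kum(\sY_t) \cong X$, any rational point of $\sY_t$ descends to a rational point of $X$, which suffices.

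First, translate the Brauer--Manin condition on $X$ into restrictions on admissible $t$. Elements of $\Br(X)$ can be analyzed via their behavior on the family $\sY \to \GG_m$ and their descent through the Kummer quotient; local pairings with the $(x_v)$ give Selmer-theoretic constraints. Combined with Dirichlet's theorem in arithmetic progressions, which suffices in place of Schinzel's hypothesis since the fibration is over $\GG_m$, one chooses $t$ with prescribed local behavior at a controlled finite set $S$ of places. Under the hypothesis that $A[2]$ is $k$-rational one has $H^1(k,A[2]) = H^1(k,\sA_t[2]) = (k^*/k^{*2})^{2g}$, independent of $t$; in particular $\alpha \in H^1(k,A[2])$ can be arranged to lie in the $2$-Selmer group $\Sel_2(\sA_t)$, so that $\sY_t$ is everywhere locally soluble, and the first-descent argument further forces the $2$-Selmer group of $\sA_t$ to be nearly minimal, so that the class $[\sY_t] \in \Sha(\sA_t)[2]$ is pinned down by essentially combinatorial data.

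The new input is the second descent step. After the first descent one may still have $[\sY_t] \neq 0$ in $\Sha(\sA_t)[2]$, in which case $\sY_t$ may fail to have a rational point. To remedy this, one studies the Cassels--Tate pairing on $\Sha(\sA_t)[2]$: if $[\sY_t]$ paired nontrivially with some class $\beta$, then a further local modification of $t$ at an auxiliary prime, chosen by Chebotarev, alters the pairing value in a prescribed way. By running this iteratively against a spanning set of $\Sha(\sA_t)[2]/(2\cdot\Sha(\sA_t)[4])$, one arranges that $[\sY_t]$ pairs trivially with all of $\Sha(\sA_t)[2]$, hence lies in $2\cdot\Sha(\sA_t)[4]$; combined with the finiteness hypothesis on $\Sha(\sA_t)[2^\infty]$, a standard divisibility argument then forces $[\sY_t] = 0$, yielding the desired rational point.

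The main technical obstacle I expect is the simultaneous bookkeeping of first- and second-descent conditions. One must ensure that the local modifications introduced to kill the Cassels--Tate pairings neither destroy the Selmer constraints already imposed in the first descent nor violate Brauer--Manin compatibility coming from the original point $(x_v) \in X(\AA_k)^{\Br}$. This demands a precise comparison between evaluations of elements of $\Br(X)$ and of $\Br(\sY_t)$ as $t$ varies, together with a Chebotarev-based choice of auxiliary primes respecting all constraints at once. The descent computations themselves are relatively standard; the real difficulty lies in this interlocking of local conditions and in verifying that the full rationality of $A[2]$ suffices to make the whole cohomological apparatus linear enough to close the argument.
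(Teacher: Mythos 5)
First, note that the statement you are addressing is a \emph{conjecture}: the paper does not prove it in general, but only for the Kummer varieties of Theorem~\ref{t:main} (equivalently Theorem~\ref{t:main-intro}), under the hypotheses that the $A_i$ have rational $2$-torsion, that $\alp$ is non-degenerate, and that each $A_i$ carries an extended $2$-structure. Your proposal correctly identifies the paper's three-step architecture (fibration to get $\alp\in\Sel_2(A^F)$, first descent in families to shrink the Selmer group, second descent via the behavior of the Cassels--Tate pairing under quadratic twists), so the overall route is the same.

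There is, however, a genuine gap in your endgame. You arrange that $[\sY_t]$ pairs trivially with all of $\Sha(\sA_t)[2]$, conclude that $[\sY_t]\in 2\cdot\Sha(\sA_t)[4]$, and then invoke ``a standard divisibility argument'' to get $[\sY_t]=0$. No such argument exists: if, say, $\Sha\{2\}\cong\ZZ/4\times\ZZ/4$, then $2\cdot\Sha[4]\cong\ZZ/2\times\ZZ/2$ is nonzero and $[\sY_t]$ could be a nonzero element of it, so membership in $2\cdot\Sha[4]$ together with finiteness proves nothing. The paper's second descent does something stronger and in a sense opposite in direction: besides keeping $\alp$ in $\Sel_2^{\circ}(A^F)$ (the kernel of the induced Cassels--Tate pairing on the Selmer group), it iteratively \emph{shrinks} $\Sel_2^{\circ}$ by producing, for each unwanted $\bet\in\Sel_2^{\circ}$ not in the span of the $\alp_i$ and $\del(A[2])$, a twist that introduces a new Selmer element $\gam$ with $\langle\bet,\gam\rangle^{\CT}\neq 0$ while $\langle\alp,\gam\rangle^{\CT}=0$ (Propositions~\ref{p:red-sel-2} and~\ref{p:red-sel-3}, which rest on the delicate analysis of how the pairing changes under twisting in Proposition~\ref{p:CT-twist}). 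Only once $\Sha^{\circ}(A_i^{F''})=\Sha[2]\cap\Sha[2]^{\perp}$ is generated by the single class $[Y_i^{F''}]$ does the conclusion follow, and it follows not by divisibility but by parity: the Cassels--Tate pairing on the finite $2$-primary part is alternating and non-degenerate (Poonen--Stoll, using that $\lam$ comes from a symmetric line bundle), so the $2$-rank of $\Sha^{\circ}\cong 2\cdot\Sha[4]$ is even, and a cyclic group of even $2$-rank is trivial. Your sketch omits both the need to control the full orthogonal complement and the parity step, and this is precisely where the ``interlocking of local conditions'' you flag as the difficulty actually lives; it is also why the paper needs \emph{extended} $2$-structures (an extra special place per factor) to carry out the second descent.
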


Conjecture~\ref{c:kummer} combined with the Tate-Shafarevich conjecture together imply that the Brauer-Manin obstruction controls the existence of rational points on Kummer surfaces. We may therefore consider any instance of Conjecture~\ref{c:kummer} as giving support for this latter claim, or more generally, support for the conjecture that the Brauer-Manin obstruction controls the existence of rational points on K3 surfaces. 

Let us now recall the strategy behind Swinnerton-Dyer's method. Let $Y$ be a $2$-covering of $A$ with associated class $\alpha\in H^1(k,A[2])$. To find a rational point on the Kummer variety $X=\Kum(Y)$, 
it is enough to find a rational point on a quadratic twist $Y^F$ for some $F/k$. At the first step of the proof,
using a fibration argument, one produces
a quadratic extension $F$ such that $Y^F$ is everywhere locally soluble.
Equivalently, $\alpha\in H^1(k,A^F[2])$ is in the \textbf{2-Selmer group} of $A^F$.
At the second step one modifies $F$ so that the 2-Selmer group of $A^F$
is spanned by $\alpha$ and the image of $A^F[2](k)$ under the Kummer 
map. This implies that $\Sha(A^F)[2]$ is spanned by the class $[Y^F]$, and is hence either $\ZZ/2$ or $0$. Now in all current applications of the method, as well as in the current paper, the abelian varieties under consideration are equipped with a principal polarization which is induced by a symmetric line bundle on $A$ (see Remark~\ref{r:alternating}). In that case it is known (see \cite{PS99}) that the induced Cassels--Tate pairing on $\Sha(A^F)$ is \textbf{alternating}. If one assumes in addition that the $2$-primary part of $\Sha(A^F)$ is finite then the $2$-part of the Cassels-Tate pairing is non-degenerate and hence the $2$-rank of $\Sha(A^F)[2]$ is even. The above alternative now implies that $\Sha(A^F)[2]$ is trivial and $[Y^F] = 0$, i.e., $Y^F$ has a rational point. Alternatively, instead of assuming that the $2$-primary part of $\Sha(A^F)$ is finite one may assume that $[Y^F]$ is not a non-trivial divisible element of $\Sha(A^F)$ (as is effectively assumed in Conjecture~\ref{c:kummer}). Indeed, the latter implies the former when $\Sha(A^F)[2]$ is generated by $[Y^F]$.
 
The process of controlling the $2$-Selmer group of $A^F$ while modifying $F$ can be considered as a type of \textbf{$2$-descent procedure} done ``in families''. In his paper~\cite{SD00}, Swinnerton-Dyer remarks that in some situations one may also take into account considerations of \textbf{second descent}. This idea is exploited in~\cite{SD00} to show a default of weak approximation on a particular family of quartic surfaces, but is not included systematically as an argument for the \textbf{existence} of rational points. As a main novelty of this paper, we introduce a form of Swinnerton-Dyer's method which includes a built-in step of ``second $2$-descent in families''. This involves a somewhat delicate analysis of the way the Cassels-Tate pairing changes in families of quadratic twists. It is this step that allows us to obtain Theorem~\ref{t:main-intro} below under reasonably simple assumptions, which resemble the type of assumptions used in~\cite{SSD}, and does not require an analogue of~\cite{SSD}'s Condition (E). Beyond this particular application, our motivation for introducing second descent into Swinnerton-Dyer's method is part of a long term goal to obtain a unified method which can be applied to an as general as possible Kummer surface. 
In principle, we expect the method as described in this paper and the method as appearing in~\cite{HS15} to admit a common generalization, which can be applied, say, to certain cases where the Galois module $A[2]$ is \textbf{semi-simple}, specializing in particular to the cases of~\cite{HS15} (where the action is simple) and the cases of~\cite{SSD} and the current paper (where the action is trivial).

With this motivation in mind, our goal in this paper is to prove Conjecture~\ref{c:kummer} for a certain class of Kummer surfaces. Let $f(x) = \prod_{i=0}^5 (x-a_i) \in k[x]$ be a polynomial of degree $6$ which splits completely in $k$ and such that $d := \prod_{i < j}(a_j - a_i) = \sqrt{\disc(f)} \neq 0$. Let $b_0,...,b_5 \in k^*$ be elements such that $\prod_i b_i \in (k^*)^2$ and consider the surface $X \subseteq \PP^5$ given by the smooth complete intersection
\begin{equation}\label{e:kummer}
\sum_i \frac{b_ix^2}{f'(a_i)} = \sum_i\frac{a_ib_ix^2}{f'(a_i)} = \sum_i\frac{a_i^2b_ix^2}{f'(a_i)} = 0 .
\end{equation}
The surface $X$ is an example of a K3 surface of degree $8$, and is also a Kummer surface whose associated family of abelian surfaces is the family of quadratic twists of the Jacobian $A$ of the hyperelliptic curve $y^2 = f(x)$. If we denote by $W$ the set of roots of $f$ then we may identify $A[2]$ with the submodule of $\mu_2^W/(-1,-1,...,-1)$ spanned by those vectors $(\eps_0,...,\eps_5) \in \mu_2^W$ such that $\prod_i\eps_i = 1$. The classes $[b_i] \in H^1(k,\mu_2)$ then naturally determine a class $\alp \in H^1(k,A[2])$, and the family of $2$-coverings associated to $X$ is exactly the family of $2$-coverings determined by this class. Our main result is then the following:
\begin{thm}\label{t:main-intro}
Assume that the classes of $\frac{b_1}{b_0},...,\frac{b_4}{b_0}$ are linearly independent in $k^*/(k^*)^2$ and that there exist finite odd places $w_1,...,w_5$ such that for every $i=1,...,5$ we have:
\begin{enumerate}[(1)]
\item
The elements $\{a_0,...,a_5\}$ are $w_i$-integral and $\val_{w_i}(a_i-a_0) = \val_{w_i}d = 1$.
\item 
The elements $\frac{b_1}{b_0},...,\frac{b_4}{b_0}$ are all units at $w_i$ but are not all squares at $w_i$. 
\end{enumerate}
Then Conjecture~\ref{c:kummer} holds for the Kummer suface $X$ given by~\eqref{e:kummer}. In particular, if the $2$-primary Tate-Shafarevich conjecture holds for every quadratic twist of $A$ then the $2$-primary Brauer-Manin obstruction is the only obstruction to the Hasse principle on $X$.
\end{thm}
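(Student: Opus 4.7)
My plan is to adapt the Swinnerton-Dyer descent-fibration strategy as outlined in the introduction, now incorporating a genuine second-descent step. Start with an adelic point $(x_v) \in X(\AA_k)^{\Br}$ surviving the Brauer-Manin obstruction. Since any rational point on a quadratic twist $Y^F$ lifts to a rational point on $X = \Kum(Y) \cong \Kum(Y^F)$, it suffices to find a quadratic extension $F/k$ with $Y^F(k) \neq \emptyset$. The parameter $t \in k^*$ such that $F = k(\sqrt{t})$ is constrained in three successive stages, and at each stage the adelic point $(x_v)$ is used to guide the choice of $t$ via the pencil $\sY \to \GG_m$.

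In the first stage, apply a fibration argument to the pencil $\sY \to \GG_m$ of $2$-coverings to produce $t$ with $\sY_t \cong Y^F$ everywhere locally soluble, equivalently $\alp \in \Sel_2(A^F) \subseteq H^1(k, A^F[2])$. Since $A[2]$ is split (all $2$-torsion being rational), one uses Dirichlet's theorem on primes in arithmetic progressions in lieu of Schinzel's hypothesis. In the second stage, further modify $t$ to trim the $2$-Selmer group to
\[
\Sel_2(A^F) \;=\; \langle \alp \rangle + \img\!\bigl(A^F[2](k) \to \Sel_2(A^F)\bigr),
\]
which by the discussion preceding the theorem implies that $\Sha(A^F)[2]$ is generated by $[Y^F]$ and thus has order at most $2$. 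Both stages proceed by choosing auxiliary primes at which $t$ is adjusted by units so as to kill unwanted Selmer classes while preserving compatibility with the Brauer-Manin data; the explicit primes $w_1,\ldots,w_5$ together with the $\FF_2$-independence of $b_1/b_0,\ldots,b_4/b_0$ provide the necessary flexibility for this twisting process, the former serving to control the relevant Galois symbols via the multiplicative reduction of $A$ at $w_i$ (which is enforced by the valuation conditions on the $a_i - a_0$ and on $d$).

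The third stage is where the second descent enters, and is the key novelty. Under the weaker hypothesis of Conjecture~\ref{c:kummer}, one cannot simply invoke finiteness of $\Sha(A^F)$ plus non-degeneracy of the alternating Cassels-Tate pairing to conclude $\Sha(A^F)[2] = 0$. Instead, I would further modify $t$ so that, assuming $\Sha(A^F)[2] \neq 0$, the class $[Y^F]$ becomes $2$-divisible in $\Sha(A^F)$; the hypothesis that $[\sY_t]$ is not a non-trivial divisible element of $\Sha(\sA_t)$ then forces $[Y^F] = 0$. Concretely, this requires analyzing the obstruction to lifting $[Y^F]$ from $\Sha(A^F)[2]$ to $\Sha(A^F)[4]$, which is controlled by the value of the Cassels-Tate pairing $\langle[Y^F], \cdot\rangle$, and tracking how this obstruction transforms under additional quadratic twists of $F$. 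The alternating property (Poonen-Stoll), combined with the fact that $\Sha(A^F)[2]$ is generated by $[Y^F]$ itself, should make this bookkeeping tractable when expressed in terms of local symbols at the auxiliary primes.

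The main obstacle will be Stage 3: the adjustments for second descent must be compatible with the Selmer conditions already fixed in Stages 1 and 2, so that the final $t$ simultaneously realizes all three sets of constraints. This is where the precise count of five auxiliary primes (as opposed to four, which would merely match the dimension of $\langle b_1/b_0, \ldots, b_4/b_0 \rangle$ in $k^*/(k^*)^2$) is expected to play a decisive role, the extra degree of freedom being exactly what is needed to absorb the second-descent modification without breaking the earlier arrangements. Setting up the combinatorial and cohomological framework for how the Cassels-Tate pairing varies across the family of quadratic twists, and cleanly isolating a single scalar obstruction that can be eliminated by one more twist, is the technical heart of the proof.
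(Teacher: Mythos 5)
Your outline reproduces the high-level architecture of the method (fibration, first descent, second descent via the Cassels--Tate pairing), but as a proof of Theorem~\ref{t:main-intro} it omits the step that the paper's own proof of this statement actually consists of. The paper deduces Theorem~\ref{t:main-intro} from the general Theorem~\ref{t:main} by verifying its hypotheses, and the only non-formal verification is that $M=\{w_1,\dots,w_5\}$ is an \emph{extended $2$-structure} for the Jacobian $A$: one must show, from the conditions $\val_{w_i}(a_i-a_0)=\val_{w_i}(d)=1$, that $A$ has multiplicative reduction at each $w_i$ with component group $C_{w_i}\cong\ZZ/2$ (via a regular model of $y^2=f(x)$ and \cite[Theorem 9.6.1]{BLR}), and that the reduction map $A[2]\to\prod_i C_{w_i}/2C_{w_i}$ is injective with image exactly the vectors having an even number of non-zero coordinates. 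This structure (five $=2g+1$ places, one more than a $2$-structure of size $2g=4$) is what makes both descent stages run; your heuristic that five exceeds by one the dimension of $\langle b_1/b_0,\dots,b_4/b_0\rangle$ is not the right accounting --- the independence of the $b_i/b_0$ is the separate non-degeneracy hypothesis on the class $\alpha$, while the fifth place is the extra place an extended $2$-structure has over a $2$-structure, consumed by the second descent.

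There is also a genuine logical gap at the end of your Stage 3: from ``$[Y^F]$ is $2$-divisible in $\Sha(A^F)$'' you cannot invoke the hypothesis that $[\sY_t]$ is not a non-trivial \emph{divisible} element, since lying in $2\Sha(A^F)$ is far weaker than lying in the divisible subgroup. The paper instead arranges that the subgroup $\Sha^{\circ}=2\Sha(A^{F''})[4]$ (the Cassels--Tate kernel inside $\Sha[2]$) is generated by $[Y^{F''}]$ alone; the non-divisibility hypothesis then forces $\Sha(A^{F''})\{2\}$ to be finite, and the non-degenerate \emph{alternating} pairing (Poonen--Stoll) gives even $2$-rank for $\Sha^{\circ}$, which, being generated by a single element, must vanish. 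Relatedly, the intermediate target of your Stage 2, namely $\Sel_2(A^F)=\langle\alpha\rangle+\partial(A[2])$, is not what the paper achieves at that point: the first descent only reduces $\Sel_2$ to its $L_{M,\alpha}$-restricted part plus $\partial(A[2])$, and it is the second descent --- driven by the explicit formula for how the Cassels--Tate pairing varies under quadratic twist, computed through Brauer classes on the Kummer variety itself --- that cuts the remaining Cassels--Tate kernel down to $\langle\alpha_1,\dots,\alpha_n\rangle+\partial(A[2])$. These two computations (the extended $2$-structure and the twist-variation of the pairing) are the technical heart you defer, and without them the plan does not yet constitute a proof.
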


\medskip

The first known case of Conjecture~\ref{c:kummer} was established in~\cite{SSD}, which is also the first application of Swinnerton-Dyer's method to Kummer surfaces. In that paper, Skorobogatov and Swinnerton-Dyer consider K3 surfaces which are smooth and proper models of the affine surface
\begin{equation}\label{e:ssd}
y^2 = g_0(x)g_1(z)
\end{equation}
where $g_0,g_1$ are separable polynomials of degree $4$. These are in fact Kummer surfaces whose associated family of $2$-coverings is the family of quadratic twists of the surface $D_0 \times D_1$, where $D_i$ is the genus $1$ curve given by
$$ D_i: y^2 = g_i(x) .$$
The associated family of abelian surfaces is the family of quadratic twists of $E_0 \times E_1$, where $E_i$ is the Jacobian of $D_i$ and is given by 
$E_i: y^2 = f_i(x)$,
where $f_i$ is the cubic resolvant of $f_i$. Three types of conditions are required in~\cite{SSD}:
\begin{enumerate}[(1)]
\item
The curves $E_0,E_1$ have all their $2$-torsion defined over $k$, i.e., $f_0$ and $f_1$ split completely in $k$. Equivalently, the discriminants of $g_0$ and $g_1$ are squares and their splitting fields are at most biquadratic.
\item
Condition (Z). This condition asserts the existence, for each $i = 0,1$, of suitable multiplicative places $v_i,w_i$ for $E_i$, at which, in particular, $E_{1-i}$ has good reduction and the classes $\alp_0,\alp_1$ are non-ramified. It also implies that the $2$-primary part of the Brauer group of $X$ is algebraic.
\item
Condition (E). This condition, which is to some extent analogous to Condition (D) in applications of the method to pencil of genus 1 curves, implies, in particular, that there is no algebraic Brauer-Manin obstruction to the existence of rational points on $X$.
\end{enumerate}

Given a Kummer surface $X$ of the form~\eqref{e:ssd} satisfying the above conditions, the main result of~\cite{SSD} asset that Conjecture~\ref{c:kummer} holds for $X$. Even more, under conditions (1)-(3) above there is no $2$-primary Brauer-Manin obstruction on $X$. It then follows, and this is how the main theorem of~\cite{SSD} is actually stated, that under the Tate-Shafarevich conjecture for the quadratic twists of $E_0,E_1$, the \textbf{Hasse principle} holds for $X$.

The second case of conjecture~\ref{c:kummer} established in the literature appears in~\cite{HS15}, where the authors consider also \textbf{Kummer varieties}, i.e., varieties obtained by applying the Kummer construction to abelian varieties of arbitrary dimension. When restricted to surfaces, the results of~\cite{HS15} cover two cases:
\begin{enumerate}[(1)]
\item
The case where $X$ is of the form~\eqref{e:ssd} where now $g_0,g_1$ are polynomials whose Galois group is $S_4$. The only other assumption required in~\cite{HS15} is that there exist odd places $w_0,w_1$ such that $g_0$ and $g_1$ are $w_i$-integral and such that $\val_{w_i}(\disc(g_j)) = \del_{i,j}$ for $i,j=0,1$.
\item
The case where $X = \Kum(Y)$ and $Y$ is a $2$-covering of the Jacobian $A$ of a hyperelliptic curve $y^2 = f(x)$, with $f$ is an irreducible polynomial of degree $5$. In this case $X$ can be realized as an explicit complete intersection of three quadrics in $\PP^5$. It is then required there exists an odd place $w$ such that $f$ is $w$-integral and $\val_w(\disc(f)) = 1$, and such that the class $\alp \in H^1(k,A[2])$ associated to $Y$ is unramified at $w$. 
\end{enumerate}

\begin{rem}
While the main theorem of~\cite{HS15} can be considered as establishing Conjecture~\ref{c:kummer} for the Kummer surfaces of type (1) and (2), what it actually states is that under the $2$-primary Tate-Shafarevich conjecture (for the relevant abelian varieties) the Kummer surfaces of type (1) and (2) satisfy the Hasse principle. The gap between these two claims can be explained by a recent paper of Skorobogatov and Zarhin~\cite{SZ16}, which shows, in particular, that there is no $2$-primary Brauer-Manin obstruction for Kummer surfaces of type (1) and (2).
\end{rem}

\section{Main results}

While our main motivation in this paper comes from Kummer surfaces, it is often natural to work in the more general context of \textbf{Kummer varieties}. These are the higher dimensional analogues of Kummer surfaces which are obtained by applying the same construction to a $2$-covering $Y$ of an abelian variety $A$ of dimension $g \geq 2$. A detailed discussion of such varieties occupies the majority of~\S\ref{s:kummer}. For now, we will focus on formulating the main theorem of this paper in the setting of Kummer varieties and show how Theorem~\ref{t:main-intro} is implied by it. We begin with some terminology which will be used throughout this paper.

Let $k$ be a number field and let $A$ be a principally polarized abelian variety of dimension $g$ over $k$. Assume that $A[2](k) \cong (\ZZ/2)^g$, i.e., that $A$ has all of its $2$-torsion points defined over $k$. Let $\A$ be a Neron model for $A$. We will denote by $C_v$ the component group of the special fiber of $\A$ over $v$. Generalizing the ideas of~\cite{SSD}, we will need to equip $A$ with a collection of ``special places''. We suggest the following terminology:
\begin{define}\label{d:2-struct}
Let $A$ be an abelian variety whose $2$-torsion points are all rational. A \textbf{$2$-structure} on $A$ is a set $M \subseteq \Om_k$ of size $2g$ containing odd multiplicative places and such and the natural map
\begin{equation}\label{e:red-map}
A[2] \lrar \prod_{w \in M} C_w/2C_w
\end{equation}
is an \textbf{isomorphism}.
\end{define}

\begin{rem}\label{r:odd}
Since $A$ has all its $2$-torsion defined over $k$ the $2$-part of $C_w$ is non-trivial for any multiplicative odd place $w$. It follows that if $M \subseteq \Om_k$ is a $2$-structure then for each $w \in M$ the $2$-part of $C_w$ must be cyclic of order $2$. 
\end{rem}

To formulate our main result we will also need the following extension of the notion of a $2$-structure:
\begin{define}\label{d:2-struct-ext}
Let $A$ be an abelian variety as above. An \textbf{extended $2$-structure} on $A$ is a set $M \subseteq \Om_k$ of size $2g+1$ containing odd multiplicative places such that the natural map
\begin{equation}\label{e:red-map-ext}
A[2] \lrar \prod_{v \in M} C_w/2C_w
\end{equation}
is injective and its image consists of those vectors $(c_w)_{w \in M} \in \prod_{w \in M} C_w/2C_w$ for which $c_w \neq 0$ at an even number of $w \in M$. 
\end{define}

\begin{example}\label{e:curve}
Let $E$ be an elliptic curve given by $y^2 = (x-c_1)(x-c_2)(x-c_3)$. If $w_1,w_2,w_3$ are three places such that $\val_{w_i}(c_j-c_k) = 1$ for any permutation $i,j,k$ of $1,2,3$, and such that $\val_{w_i}(c_i-c_j) = 0$ for any two $i \neq j$, then $\{w_1,w_2,w_3\}$ constitutes an extended $2$-structure for $E$.
\end{example}

\begin{rem}\label{r:ext}
If $M \subseteq \Om_k$ is an extended $2$-structure for $A$ then for every $w \in M$ the subset $M \setminus \{w\}$ is a $2$-structure for $A$. Conversely, if a set $M$ of $2g+1$ places satisfies the property that $M \setminus \{w\}$ is a $2$-structure for every $w \in M$ then $M$ is an extended $2$-structure. 
\end{rem}

\begin{rem}\label{r:simple}
If $A$ carries an extended $2$-structure $M$ then $A$ is necessarily \textbf{simple} (over $k$). Indeed, if $A = A_1 \times A_2$ then for every place $w$ we have $C_w = C_{w,1} \times C_{w,2}$, where $C_{w,1},C_{w,2}$ are the corresponding groups of components for $A_1,A_2$ respectively. Since the $2$-part of $C_w$ for $w \in M$ is cyclic of order $2$ (Remark~\ref{r:odd}) we see that the $2$-part of $C_{w,i}$ must be cyclic of order $2$ for one $i \in \{1,2\}$ and trivial for the other. We may then divide $M$ into two disjoint subsets $M = M_1 \cup M_2$ such that for $w \in M_i$ we have $C_{w,j}/2C_{w,j} \cong \ZZ/2^{\del_{i,j}}$. By Remark~\ref{r:ext} $M \setminus \{w\}$ is a $2$-structure for every $w \in M$. It then follows that $|M_i \setminus \{w\}| = 2\dim(A_i)$ for every $i=1,2$ and every $w \in M$, which is of course not possible.
\end{rem}

\begin{define}[{cf. \cite[Definition 3.4]{HS15}}]\label{d:nondeg}
Let $M$ be a semi-simple Galois module and let $R$ be the endomorphism algebra of $M$ (in which case $R$ naturally acts on $H^1(k,M)$). We will say that $\alp \in H^1(k,M)$ is \textbf{non-degenerate} if the $R$-submodule generated by $\alp$ in $H^1(k,M)$ is free.
\end{define}

Definition~\ref{d:nondeg} will be applied to the Galois module $M = A[2]$, which in our case is a constant Galois module isomorphic to $\ZZ/2^n$, and so $R$ is the $n\times n$ matrix ring over $\ZZ/2$. In particular, if $\alp = (\alp_1,...,\alp_n) \in H^1(k,\ZZ/2^n) \cong H^1(k,\ZZ/2)^n$ is an element then $\alp$ is non-degenerate if and only if the classes $\alp_1,...,\alp_n \in H^1(k,\ZZ/2)$ are linearly independent. 

We are now ready to state our main result.
\begin{thm}\label{t:main}
Let $k$ be a number field and let $A_1,...,A_n$ be principally polarized abelian varieties over $k$ such that each $A_i$ has all its $2$-torsion defined over $k$. For each $i$, let $M_i \subseteq \Om_k$ be an extended $2$-structure on $A_i$ such that $A_j$ has good reduction over $M_i$ whenever $j \neq i$. Let $A = \prod_i A_i$ and let $\alp \in H^1(k,A[2])$ be a non-degenerate element which is unramified over $M = \cup_i M_i$ but has non-trivial image in $H^1(k_w,A[2])$ for each $w \in M$. Let $X_{\alp} = \Kum(Y_{\alp})$ where $Y_{\alp}$ is the $2$-covering of $A$ determined by $\alp$. Then Conjecture~\ref{c:kummer} holds for $X_{\alp}$. In particular, under the Tate-Shafarevich conjecture the $2$-primary Brauer-Manin obstruction is the only one for the Hasse principle on $X_{\alp}$.
\end{thm}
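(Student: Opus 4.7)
The plan is to implement the Swinnerton-Dyer fibration-descent strategy outlined in the introduction, augmented with the ``second $2$-descent in families'' that is the principal novelty of the paper. Assuming the hypothesis of Conjecture~\ref{c:kummer} for $X_\alp$ and given an adelic point of $X_\alp$ orthogonal to $\Br(X_\alp)$, the goal is to produce a quadratic extension $F/k$ for which $Y_\alp^F$ has a $k$-rational point; its image in $X_\alp\cong \Kum(Y_\alp^F)$ then supplies the desired rational point on $X_\alp$.

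First, I would organise the quadratic twists of $A=\prod A_i$ and of $Y_\alp$ into the pencils $\sA\to \GG_m$ and $\sY\to \GG_m$ of the introduction. A fibration-plus-Dirichlet argument in the style of~\cite{SSD} and~\cite{HS15}, fed by the Brauer-Manin-orthogonal adelic point, produces a first candidate $t\in k^*$, with $F=k(\sqrt{t})$, such that $Y_\alp^F$ is everywhere locally soluble---equivalently $\alp\in\Sel_2(A^F)$---and such that the ramification of $F$ is confined to a carefully chosen set of places. Here the extended $2$-structure plays its first role: by Remark~\ref{r:odd}, for each $w\in M$ the $2$-part of $C_w$ is cyclic of order $2$, so twisting by a uniformiser at $w$ toggles the local reduction type of $A^F$; the near-surjectivity encoded in~\eqref{e:red-map-ext}, combined with the non-degeneracy of $\alp$ and the assumption that $\alp$ is non-trivial at each $w\in M$, translates these choices into genuine control over the contributions of $\alp$ to the local Selmer data at $M$, modulo the image of the Kummer map on $A^F[2](k)$.

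A first-descent Selmer pinning step then adjoins Chebotarev conditions at auxiliary inert places as in~\cite{SSD},\cite{HS15}, cutting $\Sel_2(A^F)$ down as far as the local conditions allow. Because the extended $2$-structure furnishes one more place per factor than a bare $2$-structure, there remains, after the first descent, an auxiliary quotient $V=\Sel_2(A^F)/\langle\alp, A^F[2](k)\rangle$ of controlled dimension that the first descent alone cannot eliminate; this is exactly the gap that an analogue of Condition~(E) of~\cite{SSD} would be used to close. Instead, I run a second $2$-descent on $V$: following the suggestion of Swinnerton-Dyer in~\cite{SD00}, I would write down explicit local-symbol formulas expressing the Cassels-Tate pairings $\langle [Y_\alp^F],v\rangle_{\CT}$, for $v\in V$, as functions of the twist parameter $t$ and of the Chebotarev places, and show that varying these parameters inside the family of admissible twists realises every $\FF_2$-linear functional on $V$. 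A final Chebotarev selection then yields a twist $F$ for which $\langle [Y_\alp^F],-\rangle_{\CT}$ is injective on $V$, forcing $V$ to vanish inside $\Sha(A^F)[2]$ and leaving $\Sha(A^F)[2]=\langle [Y_\alp^F]\rangle$.

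At this final twist, the standing hypothesis of Conjecture~\ref{c:kummer} for $Y_\alp^F$---namely, that $[Y_\alp^F]$ is not a non-trivial divisible element of $\Sha(A^F)$---combined with $\Sha(A^F)[2]=\langle[Y_\alp^F]\rangle$, forces the $2$-primary part of $\Sha(A^F)$ to be finite. Since the principal polarisation of $A$ is induced by a symmetric line bundle, the Cassels-Tate pairing is alternating~\cite{PS99}; on a finite group it is then non-degenerate, so the $\FF_2$-rank of $\Sha(A^F)[2]$ is even. Combined with $\Sha(A^F)[2]=\langle[Y_\alp^F]\rangle$, this forces $[Y_\alp^F]=0$, hence $Y_\alp^F(k)\neq\emptyset$, concluding the proof. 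I expect the main obstacle to be the explicit local computation of the Cassels-Tate pairing and its dependence on $t$, together with the simultaneous juggling of three sets of Chebotarev/local conditions---those of the first descent, those of the second descent, and those at $M$---which must all be compatible. The extra place per factor provided by the extended $2$-structure is precisely the degree of freedom that makes this second descent executable and obviates any analogue of Condition~(E) of~\cite{SSD}.
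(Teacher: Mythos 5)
The overall architecture of your plan --- fibration to get $\alp\in\Sel_2(A^F)$, a first descent shrinking the Selmer group, a second descent exploiting the variation of the Cassels--Tate pairing in families of quadratic twists, and a final parity argument using the alternating property --- is the same as the paper's. But the logical endgame of your second descent is wrong, in two ways. First, arranging that the single linear functional $\left<[Y_\alp^F],-\right>^{\CT}$ be injective on $V=\Sel_2(A^F)/\langle\alp,\del(A^F[2])\rangle$ is impossible unless $\dim_2 V\leq 1$, and, more seriously, it is self-defeating: the Cassels--Tate pairing factors through $\Sha(A^F)$, so if $\left<[Y_\alp^F],v\right>^{\CT}\neq 0$ for some $v$ then necessarily $[Y_\alp^F]\neq 0$, i.e.\ $Y_\alp^F$ has no rational point --- the opposite of what you are trying to prove. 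Second, the inference ``$v$ pairs non-trivially under CT, hence $v$ vanishes in $\Sha(A^F)[2]$'' is false: the radical of the pairing is the divisible part (equivalently, under finiteness, $2\Sha(A^F)[4]$), so an element pairing non-trivially with something is a \emph{non-zero, non-divisible} element of $\Sha(A^F)[2]$. The second descent therefore cannot deliver $\Sha(A^F)[2]=\langle[Y_\alp^F]\rangle$.

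The paper runs the pairing argument in exactly the opposite direction. Throughout the descent one \emph{preserves} the condition that $\alp$ is orthogonal to the whole Selmer group (Conditions (A2) and (B4)), and one twists so that every element of $\Sel_2(A^F)$ outside $\langle\alp_1,\dots,\alp_n\rangle+\del(A[2])$ pairs non-trivially with \emph{some} element of $\Sel_2(A^F)$: Proposition~\ref{p:red-sel-3} first raises the Selmer rank by one to create an auxiliary class $\gam$, and Proposition~\ref{p:red-sel-2} then adjusts $\left<\gam,-\right>^{\CT}$ by a prescribed antisymmetric form $\rho_\sig\wedge\rho_\tau$ so that $\gam$ stays orthogonal to $\alp$ but detects the offending class $\bet$. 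The conclusion is that the radical $\Sha^{\circ}(A_i^{F''})\cong\Sha(A_i^{F''})[4]/\Sha(A_i^{F''})[2]$ --- not $\Sha(A_i^{F''})[2]$ itself --- is generated by $[Y_i^{F''}]$; the non-divisibility hypothesis then gives finiteness, the alternating property gives that this radical has even $2$-rank, hence it vanishes and $[Y_i^{F''}]=0$. You should restructure your second descent so that $\alp$ remains in the radical while the unexplained Selmer classes are pushed out of it, and apply the parity argument to $\Sha^{\circ}$ rather than to $\Sha[2]$.
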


\begin{rem}
The proof of Theorem~\ref{t:main} actually yields a slightly stronger result: under the Tate-Shafarevich conjecture the $2$-primary \textbf{algebraic} Brauer-Manin obstruction is the only one for the Hasse principle on $X$ (see Remark~\ref{r:2-pri-alg}). In fact, one can isolate an explicit finite subgroup $\C(X_\alp) \subseteq \Br(X_\alp)$ (see Definition~\ref{d:C-alpha}) whose associated obstruction is, in this case, the only one for the Hasse principle.
\end{rem}

\begin{rem}
When $A$ is a product of two elliptic curves with rational $2$-torsion points one obtains the same type of Kummer surfaces as the ones studied in~\cite{SSD}. However, the conditions required in Theorem~\ref{t:main} are not directly comparable to those of~\cite{SSD}. On the one hand, Theorem~\ref{t:main} does not require any analogue of Condition (E). On the other hand, Theorem~\ref{t:main} requires each elliptic curve to come equipped with an extended $2$-structure (consisting, therefore, of three special places for each curve, see Example~\ref{e:curve}), while the main theorem of~\cite{SSD} only requires each elliptic curve to have a $2$-structure (consisting, therefore, of two special places for each curve). Modifying the argument slightly, one can actually make Theorem~\ref{t:main} work with only a $2$-structure for each $A_i$, at the expense of assuming some variant of Condition (E). In the case of a product of elliptic curves, this variant is slightly weaker than the Condition (E) which appears in~\cite{SSD}. This can be attributed to the existence of a phase of second descent, which does not appear in~\cite{SSD}. 
\end{rem}

We finish this section by showing how Theorem~\ref{t:main-intro} can be deduced from Theorem~\ref{t:main}. Let $f(x) = \prod_{i=0}^5 (x-a_i) \in k[x]$ be a polynomial of degree $6$ which splits completely in $k$ and such that $d := \prod_{i < j}(a_j - a_i) = \sqrt{\disc(f)} \neq 0$. Let $C$ be the hyperelliptic curve given by $y^2 = f(x)$ and let $A$ be the Jacobian of $C$. If we denote by $W = \{a_0,...,a_5\}$ the set of roots of $f$ then we may identify $A[2]$ with the submodule of $\mu_2^W/(-1,-1,...,-1)$ spanned by those vectors $(\eps_0,...,\eps_5) \in \mu_2^W$ such that $\prod_i\eps_i = 1$. Consequently, if we denote by $\G = k^*/(k^*)^2 = H^1(k,\mu_2)$ then we may identify 
$$ H^1(k,A[2]) \cong \left\{(\bet_0,...,\bet_5) \in \G^W \right|\left. \prod \bet_i = 1\right\} / \G .$$ 
In particular, we may represent element of $H^1(k,\mu_2)$ be vectors $\ovl{b} = (b_0,...,b_5) \in (k^*)^W$ such that $\prod_i b_i \in (k^*)^2$, defined up to squares and up to multiplication by a constant $b \in k^*$. In fact, we may always choose a representative such that $\prod_i b_i = 1$. We note that the corresponding element $\bet = [\ovl{b}] \in H^1(k,A[2])$ is non-degenerate (in the sense of Definition~\ref{d:nondeg} if and only if the classes $[b_1/b_0],...,[b_4/b_0]$ are linearly independent in $k^*/(k^*)^2$.

\begin{proof}[Proof of Theorem~\ref{t:main-intro} assuming Theorem~\ref{t:main}]
Let $Y$ be the $2$-covering of $A$ determined by the class $\bet = [\ovl{b}] \in H^1(k,A[2])$. Then the Kummer surface $X = \Kum(Y)$ is isomorphic to the smooth complete intersection~\eqref{e:kummer} by~\cite[Theorem 3.1]{Sk10}. 
The assumptions of Theorem~\ref{t:main-intro} now imply that $\bet$ is unramified over $M = \{w_1,...,w_5\}$ but is non-trivial at each $H^1(k_{w_i},A[w])$, and so the assumptions of Theorem~\ref{t:main} will hold as soon as we show that $M$ forms an extended $2$-structure for $A$. Now for each $i$ the polynomial $f$ is $w_i$-integral and the place $w_i$ satisfies $\val_{w_i}(a_i-a_0) = 1$ and $\val_{w_i}(a_j-a_{j'}) = 0$ whenever $j \neq j'$ and $\{j,j'\} \neq \{0,i\}$. Since $f$ is $w_i$-integral it determines an $w_i$-integral model $\C$ for $C$, and a local analysis shows that the reduction of $\C$ mod $w$ is a curve of geometric genus $1$ and a unique singular point $P$, which is also a rational singular point of the model $\C$. Blowing up at $P$ one obtains a regular model for $C$ at $w$ whose special fiber has two components (of genus $1$ and $0$ respectively) which intersect at two points. Using~\cite[Theorem 9.6.1]{BLR} we may compute that the group of components $C_{w_i}$ of a N\'eron model for $A$ is isomorphic to $\ZZ/2$. Now for each $i=1,...,4$ let $P_i \in A$ be the point corresponding to the formal sum $(a_i,0) - (a_5,0)$ of points of $C$. Then for $i,j = 1,...,4$ we have that the image of $P_i$ in $C_{w_j}$ is nontrivial if and only if $i=j$. On the other hand, all the four points $P_1,...,P_4$ map to the non-zero component of $w_5$. It then follows that the map 
$$ A[2] \lrar \prod_{i=1}^{5} C_{w_i} = \prod_{i=1}^{5} C_{w_i}/2C_{w_i} $$
is injective and its image consists of exactly those vectors $(c_1,...,c_5) \in \prod_{i=1}^{5} C_{w_i}$ in which an even number of the $c_v$'s are non-trivial.
\end{proof}

\section{Preliminaries}\label{s:prelim}

In this section we will established some preliminary machinery that will be used in \S\ref{s:proof} to prove Theorem~\ref{t:main}. We will begin in \S\ref{s:weil} by recalling the \textbf{Weil pairing} and establish some useful lemmas in the situation where all the $2$-torsion points of $A$ are defined over $k$. In \S\ref{s:CT} we simply recall a definition of the Cassels-Tate pairing via evaluation of Brauer elements. In \S\ref{s:kummer} we give a short introduction to Kummer varieties and consider cases where the Brauer elements appearing in \S\ref{s:CT} descent to the corresponding Kummer varieties. Finally, in \S\ref{s:MR} we recall the approach of Mazur and Rubin to the analysis of the change of Selmer groups in families of quadratic twists. While mostly relying on ideas from~\cite{MR10}, this section is essentially self contained, and we give detailed proofs of all the results we need. We then complement the discussion of Selmer groups in families of quadratic twist by considering the change of Cassels-Tate pairing under quadratic twist, using results of \S\ref{s:kummer}.

\subsection{The Weil pairing}\label{s:weil}

Let $A$ be an abelian variety over a number field $k$ and let $\hat{A}$ be its dual abelian variety. Recall the \textbf{Weil pairing}
$$ \left<,\right>: A[2] \times \hat{A}[2] \lrar \mu_2 ,$$
which is a perfect pairing of finite Galois modules. If $A$ is equipped with a principal polarization, i.e., a self dual isomorphism $\lam: A \x{\cong}{\lrar} \hat{A}$, then we obtain an isomorphism $A[2] \cong \hat{A}[2]$ and an induced alternating self pairing
\begin{equation}\label{e:weil}
\left<,\right>_{\lam}: A[2] \times A[2] \lrar \mu_2
\end{equation}
which is known to be alternating. We note that a principal polarization $\lambda$ induces a principal polarization $A^F \x{\cong}{\lrar} \hat{A}^F$ after quadratic twist by any quadratic extension $F/k$. To keep the notation simple we will use the same letter $\lam$ to denote all these principal polarizations. 

\begin{rem}\label{r:abuse}
The $2$-torsion modules $A[2]$ and $A^F[2]$ are \textbf{canonically} isomorphic for any $F/k$. We will consequently often abuse notation and denote by $A[2]$ the $2$-torsion module of any given quadratic twist of $A$.
\end{rem}

\begin{rem}\label{r:invariant}
The Weil pairing~\eqref{e:weil} depends only on the base change of $A$ to $\ovl{k}$. In particular, the Weil pairings induced on $A[2] \cong A^F[2]$ by all quadratic twists of $\lambda$ are the same. 
\end{rem}

From now until the end of this section we shall \textbf{fix the assumption} that $A$ has all of its $2$-torsion points defined over $k$. In this case we have an induced pairing 
\begin{equation}\label{e:weil-2}
\left<,\right>_{\lam}: H^1(k,A[2]) \times A[2] \lrar H^1(k,\mu_2)
\end{equation}
which by abuse of notation we shall denote by the same name. 
Let $M$ be a $2$-structure on $A$ (see Definition~\ref{d:2-struct}). For each $w \in M$ let $P_w \in A[2]$ be such that $\left<Q,P_w\right>_{\lam} = -1$ if and only if the image of $Q$ in $C_w/2C_w$ is non-trivial. It then follows from Definition~\ref{d:2-struct} that $\{P_w\}_{w \in M}$ forms a basis for $A[2]$. We will denote by $\{Q_w\}$ the dual basis of $\{P_w\}$ with respect to the Weil pairing. We note that by construction the image of $Q_w$ in $C_{w'}/2C_{w'}$ is non-trivial if and only if $w = w'$.

\begin{rem}
Since the Weil pairing is non-degenerate it follows that the association
$$ \bet \mapsto \left(\left<\bet,P_w\right>_{\lam}\right)_{w \in M} $$
determines an \textbf{isomorphism}
$$ H^1(k,A[2]) \x{\cong}{\lrar} H^1(k,\mu_2)^M $$
We may think of this isomorphism as a set of ``canonical coordinates'' on $H^1(k,A[2])$.
\end{rem}

\begin{rem}
While our notation for the group structure on $A[2]$ is additive, i.e., we write $P+Q$ for the sum of two points $P,Q \in A[2]$, our notation for the group structure on $\mu_2 = \{-1,1\}$ is \textbf{multiplicative}. For example, the linearity of $\left<,\right>_{\lam}$ in its left entry will be written as $\left<P+Q,R\right>_\lambda = \left<P,R\right>_{\lambda}\left<Q,R\right>_{\lambda}$. Similarly, the group operation of $H^1(k,A[2])$ will be written additively, while that of $H^1(k,\mu_2)$ multiplicatively.
\end{rem}

\begin{define}
We will denote by $\partial: A(k) \lrar H^1(k,A[2])$ the boundary map induced by the Kummer sequence of $A$. Similarly, for a quadratic extension $F/k$ we will denote by $\partial_{F}: A^F(k) \lrar H^1(k,A[2])$ the boundary map associated to the Kummer sequence of $A^F$, where we have implicitly identified $A^F[2]$ with $A[2]$ (see Remark~\ref{r:abuse}).
\end{define}

\begin{rem}
The bilinear map $(P,Q) \mapsto \left<\del(P),Q\right>_{\lam} \in H^1(k,\mu_2)$ is \textbf{not} symmetric in general. While this fact will not be used in this paper we note more precisely that
$$ \left<\del(P),Q\right>_{\lam}\left<\del(Q),P\right>_{\lam} = [\left<P,Q\right>_{\lam}], $$
where $[\left<P,Q\right>_{\lam}]$ denotes the image of $\left<P,Q\right>_{\lam}$ under the composed map $\mu_2 \lrar k^* \lrar k^*/(k^*)^2 \cong H^1(k,\mu_2)$. 
\end{rem}


For the purpose of the arguments in \S\ref{s:proof} we will need to establish some preliminary lemmas. The first one concerns the question of how the bilinear map $(P,Q) \mapsto \left<\del(P),Q\right>_{\lam}$ changes under quadratic twists.

\begin{lem}\label{l:twist}
Let $A$ be an abelian variety as above and let $P,Q \in A[2]$ be two $2$-torsion points. Let $F = k(\sqrt{a})$ be a quadratic extension. Then
$$  \left<\del_F(Q),P\right>_{\lam}\left<\del(Q),P\right>_{\lam}^{-1} =  \left\{\begin{matrix} 1 & \left<Q,P\right>_{\lam} = 1 \\ [a] & \left<Q,P\right>_{\lam} = -1 \\ \end{matrix}\right. $$
where $[a] \in k^*/(k^*)^2 \cong H^1(k,\mu_2)$ denotes the class of $a$ mod squares.
\end{lem}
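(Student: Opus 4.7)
My plan is to compute both boundary classes $\partial(Q)$ and $\partial_F(Q)$ explicitly on the level of cocycles, using the fact that $A$ and $A^F$ share the same underlying variety over $\bar k$, and then take the Weil pairing with $P$. Concretely, I would use the description of the quadratic twist in which $A^F(\bar k) = A(\bar k)$ as a set, but the Galois action is twisted: for $\sigma \in \Gal(\bar k/k)$ and $x \in A(\bar k)$ one has $\sigma \star x = c(\sigma)\,\sigma(x)$, where $c(\sigma)\in \{\pm 1\}$ is $+1$ if $\sigma$ fixes $\sqrt{a}$ and $-1$ otherwise. This is consistent with the canonical identification $A^F[2] = A[2]$ of Remark~\ref{r:abuse}, since $\pm 1$ act trivially on $2$-torsion.

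Next I would pick a single $R \in A(\bar k) = A^F(\bar k)$ with $2R = Q$, noting that the doubling map on $A^F$ agrees with that on $A$ as a set-theoretic map (being a $k$-morphism on both). Then $\partial(Q)$ is represented by $\sigma \mapsto \sigma(R) - R$ while $\partial_F(Q)$ is represented by $\sigma \mapsto c(\sigma)\sigma(R) - R$. Subtracting in the additive group $A[2]$ gives
\[
\partial_F(Q)(\sigma) - \partial(Q)(\sigma) \;=\; (c(\sigma)-1)\sigma(R),
\]
which vanishes when $c(\sigma) = 1$, and equals $-2\sigma(R) = -\sigma(Q) = Q$ when $c(\sigma) = -1$ (using that $Q$ is $k$-rational and $2$-torsion, so $-Q = Q$). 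Hence $\partial_F(Q) - \partial(Q) \in H^1(k,A[2])$ is represented by the cocycle $\sigma \mapsto \chi_F(\sigma)\cdot Q$, where $\chi_F\colon \Gal(\bar k/k) \to \ZZ/2$ is the quadratic character cutting out $F$.

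Finally, I would pair with $P$ via the Weil pairing~\eqref{e:weil-2}, which is evaluated on cocycles coordinatewise and is $\ZZ/2$-linear in the first argument. The resulting class in $H^1(k,\mu_2)$ is represented by
\[
\sigma \longmapsto \left<\chi_F(\sigma)\,Q,\,P\right>_{\lam} \;=\; \left<Q,P\right>_{\lam}^{\chi_F(\sigma)}.
\]
When $\left<Q,P\right>_{\lam} = 1$ this cocycle is trivial; when $\left<Q,P\right>_{\lam} = -1$ it is precisely the Kummer cocycle $\sigma \mapsto \sigma(\sqrt a)/\sqrt a$ representing the class $[a] \in k^*/(k^*)^2 \cong H^1(k,\mu_2)$. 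This gives exactly the dichotomy in the lemma. The only delicate point I anticipate is pinning down the convention for the quadratic twist (so that $\sigma \star x = c(\sigma)\sigma(x)$ is the correct twisted action, rather than an inverse or conjugate convention); once fixed, everything reduces to the cocycle arithmetic above.
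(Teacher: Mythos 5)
Your proposal is correct and follows essentially the same route as the paper: representing $\del(Q)$ and $\del_F(Q)$ by the cocycles $\sig\mapsto\sig(x)-x$ and $\sig\mapsto\chi(\sig)\sig(x)-x$ for a fixed $x$ with $2x=Q$, computing the difference $(\chi(\sig)-1)\sig(x)=Q$ when $\chi(\sig)=-1$, and pairing with $P$. The only cosmetic difference is that the paper pairs with $P$ before taking the difference of cocycles, whereas you subtract first and then pair.
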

\begin{proof}
Let $Z_Q \subseteq A$ be the finite subscheme determined by the condition $2x = Q$. Then $Z_Q$ carries a natural structure of an $A[2]$-torsor whose classifying element in $H^1(k,A[2])$ is given by $\del(Q)$. Given a point $x \in Z_Q(\ovl{k})$ we may represent $\del(Q)$ by the $1$-cocycle $\sig \mapsto \sig(x) - x$, and consequently represent $\left<\del(Q),P\right>_{\lam}$ by the $1$-cocycle $\sig \mapsto \left<\sig(x) - x,P\right>_{\lam} \in \mu_2$. Let $\Gam_k$ be the absolute Galois group of $k$ and 
let $\chi: \Gam_k \lrar \mu_2$ be the quadratic character associated with $F/k$. In light of Remark~\ref{r:invariant} we see that the class $\left<\del_F(Q),P \right>_{\lam}$ can be represented by the $1$-cocycle
$$ \sig \mapsto \left<\chi(\sig)\sig(x) - x,P\right>_{\lam} \in \mu_2 .$$
We may hence compute that
$$ \left<\chi(\sig)\sig(x) - x,P\right>_{\lam} \left<\sig(x) - x,P\right>_{\lam}^{-1} = \left<(\chi(\sig)-1)\sig(x),P\right>_{\lam} = \left\{\begin{matrix} 1 & \chi(\sig) = 1 \\ \left<Q,P\right>_{\lam} & \chi(\sig) = -1 \\ \end{matrix}\right. $$
This means that when $\left<P,Q\right>_{\lam} = 1$ the class $\left<\del_F(Q),P\right>_{\lam}\left<\del(Q),P\right>_{\lam}^{-1}$ vanishes, and when $\left<P,Q\right>_{\lam} = -1$ the class $\left<\del_F(Q),P\right>_{\lam}\left<\del(Q),P\right>_{\lam}^{-1}$ coincides with $[a]$, as desired.
\end{proof}

For a place $w \in \Om_k$ we will denote by $k^{\un}_w/k_w$ the maximal unramified extension of $k_w$. The following lemma concerns the Galois action on certain $4$-torsion points which are defined over extensions ramified at $w$.
\begin{lem}\label{l:p_w-1}
Let $w \in M$ be a place in the $2$-structure $M$ of $A$ and let $Q \in A[2]$ be a point whose image in $C_w/2C_w$ is non-trivial. Let $x \in A(\ovl{k})$ be a point such that $2x = Q$ and let $L_Q/k^{\un}_w$ be the minimal Galois extension such that all the points of $Z_Q$ are defined over $L_Q$. Then $\Gal(L_Q/k) \cong \ZZ/2$ and if $\sig \in \Gal(L_Q/k)$ is the non-trivial element then $\sig(x) = x + P_w$.
\end{lem}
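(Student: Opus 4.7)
The statement is local at $w$, so the plan is to work entirely over $k^{\un}_w$ and exploit the rigid analytic (Mumford) uniformization of $A$ at the multiplicative place $w$. Since $w$ is odd and multiplicative and $A[2]$ is rational, over $k^{\un}_w$ the abelian variety $A$ fits into an exact sequence $0 \to L \to T \to A \to 0$ of rigid analytic groups, with $T$ a split torus of dimension $g$ and $L$ a full-rank lattice of periods. The component group is then identified with $C_w = N/\ell(L)$, where $N = X_*(T)$ and $\ell : L \to N$ is induced by the valuation, and by Remark~\ref{r:odd} its $2$-primary part is cyclic of order $2$.

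To prove the first assertion, I would lift $Q$ to $\tilde Q \in T(k^{\un}_w)$ with $2\tilde Q \in L$, so that the class of $\ell(\tilde Q) \in \tfrac{1}{2}N$ modulo $\ell(L)$ is the image of $Q$ in $C_w$. The hypothesis that $Q$ has nontrivial image in $C_w/2C_w$ then translates to $\ell(\tilde Q) \notin 2N + \ell(L)$. Consequently, any lift $\tilde x \in T(\ovl{k_w})$ of a solution $x$ to $2x = Q$ satisfies $2\tilde x = \tilde Q + l$ for some $l \in L$, and has valuation $\ell(\tilde x) = \tfrac{1}{2}(\ell(\tilde Q) + \ell(l)) \in \tfrac{1}{2}N \setminus N$. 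Hence $\tilde x$ cannot be defined over $k^{\un}_w$, but becomes defined over the unique ramified quadratic extension $L_Q = k^{\un}_w(\sqrt{\pi})$ (whose value group $\tfrac{1}{2}\ZZ$ accommodates $\ell(\tilde x)$), yielding $\Gal(L_Q/k^{\un}_w) \cong \ZZ/2$.

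For the identification $\sigma(x) = x + P_w$, let $\sigma$ be the nontrivial element of $\Gal(L_Q/k^{\un}_w)$, so that $\sigma(\sqrt{\pi}) = -\sqrt{\pi}$. Since $(\sigma(\tilde x)/\tilde x)^2 = \sigma(\tilde Q + l)/(\tilde Q + l) = 1$ (as $\tilde Q + l \in T(k^{\un}_w)$), the ratio $\sigma(\tilde x)/\tilde x$ lies in $T[2] \subset A[2]$ and represents $\sigma(x) - x$. Explicitly, for each character $\chi \in X^*(T)$, the value $\chi(\sigma(\tilde x)/\tilde x) \in \mu_2$ is $(-1)$ to the power of the valuation of $\chi(\tilde x)^2 = \chi(\tilde Q + l)$ in $k^{\un}_w$, which equals $\langle \chi, \ell(\tilde Q) + \ell(l)\rangle$. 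Thus $\sigma(x) - x \in T[2] \cong \Hom(X^*(T), \mu_2)$ is the element sending each character $\chi$ to $(-1)^{\langle\chi, \ell(\tilde Q)+\ell(l)\rangle}$, a datum depending (after a careful check of independence from the choice of $l$, using $\ell(L) \subset N$) only on the image $\bar Q$ of $Q$ in $C_w$.

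The final and main step is to match this element with $P_w$. By the defining property of $P_w$, this amounts to showing $\left<\sigma(x) - x, R\right>_{\lam} = -1$ precisely when $R \in A[2]$ has nontrivial image in $C_w/2C_w$. The principal obstacle will be the careful comparison between the Weil pairing on $A[2]$, coming from the polarization $\lam$, and the natural evaluation pairing $T[2] \times (A[2]/T[2]) \to \mu_2$ arising from the identifications $T[2] \cong \Hom(X^*(T), \mu_2)$ and $A[2]/T[2] \cong L/2L$. Once this classical compatibility (for polarized Mumford uniformization) is in place, the identification $\sigma(x) - x = P_w$ follows by reducing to the perfect duality between $X^*(T)$ and $N = X_*(T)$, under which the condition that $R$ has nontrivial image in $C_w/2C_w$ matches precisely the condition $\left<\sigma(x)-x, R\right>_{\lam} = -1$.
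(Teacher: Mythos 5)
Your strategy of uniformizing $A$ at $w$ is viable in principle, but as written it rests on an assumption that fails in the very cases the lemma is used for: you take the uniformization $0 \to L \to T \to A \to 0$ with $T$ a split torus of dimension $g$ and $L$ a lattice of full rank. That is the \emph{totally degenerate} case. In this paper a ``multiplicative'' place only means that the toric rank of the reduction is positive; in the application to Theorem~\ref{t:main-intro}, $A$ is the Jacobian of a genus-$2$ curve whose reduction at $w_i$ is a genus-$1$ curve with a single node, so the identity component of the special fibre of the N\'eron model is an extension of an elliptic curve by a one-dimensional torus, and the period lattice has rank $1$ while $g = 2$. Consequently $C_w$ is not $N/\ell(L)$ with $N$ of rank $g$, the sequence $0 \to T[2] \to A[2] \to L/2L \to 0$ must be replaced by its semiabelian analogue built from $0 \to T \to E \to B \to 0$ and $A = E/L$, and your valuation bookkeeping (e.g.\ ``$\ell(\tilde x) \in \tfrac{1}{2}N \setminus N$ forces ramification'') does not make sense as stated, since most coordinates of $A$ have no toric valuation. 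Reworking the argument with the general Raynaud--Mumford uniformization is a substantial change, not a cosmetic one. In addition, even in the totally degenerate case, the step you yourself flag as ``the principal obstacle'' --- matching the polarized Weil pairing on $A[2]$ with the evaluation pairing between $T[2] = \Hom(X^*(T),\mu_2)$ and $L/2L$ --- is precisely where the content of the lemma lies, and you leave it as an unproved classical compatibility.

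For comparison, the paper's proof sidesteps uniformization entirely. Tameness at the odd place $w$ forces $\Gal(L_Q/k^{\un}_w)$ to be cyclic and $2$-elementary, hence of order at most $2$, and nontrivial because $Q$ does not reduce into $2C_w$; this gives the first assertion. For the second, if $P \in A[2]$ reduces into the identity component then Hensel's lemma produces $y \in A(k^{\un}_w)$ with $2y = P$, and the compatibility of the Weil pairings on $A[2]$ and $A[4]$ together with $\sig(y) = y$ gives $\left<\sig(x)-x,P\right>_{\lam} = 1$ directly. Since the orthogonal complement of the kernel of $A[2] \lrar C_w/2C_w$ is $\{0,P_w\}$ and $\sig(x) \neq x$, this pins down $\sig(x) - x = P_w$ with no input from the theory of degenerating abelian varieties. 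You should either adopt an argument of this soft kind or carry out the semiabelian uniformization and the pairing comparison in full.
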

\begin{proof}
After restricting $\del(Q)$ to $k^{\un}_w$ we obtain an injective homomorphism $\Gal(L_Q/k^{\un}_w) \lrar A[2]$. In particular, $L_Q$ is a finite abelian $2$-elementary extension of $k^{\un}_w$. Since $w$ is odd $L_w/k^{\un}_w$ is tamely ramified and hence cyclic, which means that $\Gal(L_Q/k^{\un}_w)$ is either $\ZZ/2$ or trivial. Since the image of $Q$ in $C_w/2C_w$ is non-trivial the latter is not possible and we may hence conclude that $\Gal(L_Q/k^{\un}_w)$. Let $\sig \in \Gal(L_Q/k^{\un}_w)$ be the non-trivial element.

Let $P \in A[2]$ be any $2$-torsion point whose image in $C_w/2C_w$ is trivial. It then follows from Hensel's lemma that there exists a $y \in A(k^{\un}_w)$ such that $2y = P$. Let us denote by
$$ \left<,\right>^4_{\lambda}: A[4] \times A[4] \lrar \GG_m[4] $$
the Weil pairing on the $4$-torsion. Then by the compatibility property of the Weil pairings we have
$$ \left<\sig(x)-x,P\right>_{\lambda} = \left<\sig(x)-x,y\right>^4_{\lambda} = \left<\sig(x),y\right>^4_{\lambda}\left[\left<x,y\right>^4_{\lambda}\right]^{-1} = 1 $$
where the last equality holds since $\sig(y) = y$, the Weil pairing is Galois invariant, and $\GG_m[4]$ is fixed by $\sig$. It follows that the $2$-torsion point $\sig(x)-x$ is orthogonal to every $2$-torsion point whose $w$-reduction lies on the identity component. The only two $2$-torsion points which have this orthogonality property are $0$ and $P_w$. The former option is not possible since $x$ is not defined over $k$ and hence we may conclude that $\sig(x) - x = P_w$, as desired.
\end{proof}

We now explore two corollaries of Lemma~\ref{l:p_w-1}.
\begin{cor}\label{c:p_w-2}
Let $w \in M$ be a place in the $2$-structure $M$ of $A$, let $L/k$ be a non-trivial quadratic extension which is ramified at $w$ and let $w'$ be the unique place of $L$ lying above $w$. Let $A_L = A \otimes_{k} L$ be the base change of $A$ to $L$ and let $C_{w'}$ be the group of components of the reduction of the Néron model of $A_L$ at $w'$. Then $2$-part of $C_{w'}$ is cyclic of order $4$ and the induced action of $\Gal(L/k)$ on $C_{w'}/4C_{w'} \cong \ZZ/4$ is trivial.
\end{cor}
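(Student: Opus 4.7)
The plan is to use Lemma~\ref{l:p_w-1} to exhibit a generator of order $4$ in the $2$-part of $C_{w'}$, and then to compute the $\Gal(L/k)$-action on it via the same lemma.

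Applying Lemma~\ref{l:p_w-1} to the distinguished point $Q_w \in A[2]$, whose reduction generates $C_w[2^\infty] = \ZZ/2$, produces $x \in A(\bar k_w)$ with $2x = Q_w$, defined over the unique ramified quadratic extension $L_{Q_w} = k^{\un}_w(\sqrt{\pi})$ of $k^{\un}_w$. Since $w$ is odd, any ramified quadratic $L_w/k_w$ satisfies $L_w \cdot k^{\un}_w = L_{Q_w}$ (units of $k_w$ become squares in $k^{\un}_w$), so $x \in A_L(L^{\un}_w)$ and its reduction $\bar x \in C_{w'}$ satisfies $2\bar x = \bar Q_w$, the image of $Q_w$ under the base-change map $C_w \to C_{w'}$. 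To pin down the structure of the $2$-part of $C_{w'}$, I first show that the toric rank $t$ of $A$ at $w$ equals $1$: Hensel applied to the étale group scheme $\A[2]$ embeds $A[2](k) \cong (\ZZ/2)^{2g}$ into $\A(\overline{\FF_w})$, while the $\bar\FF$-points of $\A^0[2]$ have $\FF_2$-dimension $t + 2(g-t) = 2g - t$, so the cokernel, which injects into $C_w[2]$, forces $\dim_{\FF_2} C_w[2] \geq t$; the $2$-structure gives $\dim_{\FF_2} C_w[2] = 1$, so $t \leq 1$, while multiplicativity at $w$ forces $t \geq 1$. The standard analysis of semistable Néron models under tame ramified base change (yielding $|C_{w'}| = e \cdot |C_w|$ when the toric rank is $1$) then shows that $C_w$ and $C_{w'}$ are cyclic with $|C_{w'}| = 2|C_w|$; in particular $C_w \hookrightarrow C_{w'}$, so $\bar Q_w \ne 0$, $\bar x$ has order exactly $4$, and $C_{w'}[2^\infty] \cong \ZZ/4$ is generated by $\bar x$.

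For the Galois action, the tame product decomposition $\Gal(L^{\un}_w/k_w) \cong \Gal(L_w/k_w) \times \Gal(k^{\un}_w/k_w)$ identifies $\sig \in \Gal(L_{Q_w}/k^{\un}_w)$ with a lift of the nontrivial $\tau \in \Gal(L_w/k_w)$, and since the $\Gal(L^{\un}_w/L_w)$-action on $C_{w'}$ factors through the residual quotient $\Gal(\bar\FF/\FF)$ on which $\sig$ acts trivially, the $\sig$-action and the $\tau$-action on $C_{w'}$ coincide. Reducing $\sig(x) = x + P_w$ modulo $w'$ yields $\tau(\bar x) = \bar x + \bar P_w$, so it remains to show $\bar P_w = 0$. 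The alternating property $\left<P_w, P_w\right>_\lam = 1$, combined with the defining property of $P_w$, forces the image of $P_w$ in $C_w/2C_w$ to be trivial; since $P_w$ is $2$-torsion and $C_w[2] \cap 2C_w = 0$ (as the $2$-part of $C_w$ is $\ZZ/2$), $P_w$ already reduces to zero in $C_w$, and therefore also in $C_{w'}$ because identity components are preserved under the base-change map $\A \otimes_{\OO_k} \OO_L \to \A_L$. Hence $\tau$ fixes the generator $\bar x$ of $C_{w'}/4C_{w'} \cong \ZZ/4$, completing the proof.

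The main subtlety I anticipate is the local-Galois bookkeeping identifying the $\sig$ of Lemma~\ref{l:p_w-1} (constructed over $k^{\un}_w$) with the nontrivial element of $\Gal(L/k)$ acting on $C_{w'}$; once the tame product decomposition of $\Gal(L^{\un}_w/k_w)$ is invoked, everything reduces directly to the relation $\sig(x) = x + P_w$ and the observation that $P_w$ lies in the identity component of the Néron model by the alternating Weil pairing.
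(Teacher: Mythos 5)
Your proof is correct, but it reaches the first assertion --- that the $2$-part of $C_{w'}$ is exactly $\ZZ/4$ --- by a genuinely different route from the paper. The paper keeps this step self-contained: the lower bound comes, as in your argument, from the order-$4$ component carrying the reduction of a point $x$ with $2x=Q$, and the upper bound comes from a contradiction --- if a component $X'$ with $2X'=X$ existed, Hensel's lemma would produce $y\in A(L')$ with $2y=x$, and then $z=\sig(y)-y$ would be a point of exact order $4$ with $2z=P_w$ and $\sig(z)=-z$, contradicting the fact (again by Hensel, since $P_w$ reduces into the identity component) that every such $z$ is already defined over $k^{\un}_w$. You instead first pin down the toric rank at $w$ as $1$ by counting $2$-torsion in the special fiber against the constraint $C_w[2]\cong\ZZ/2$ --- a computation the paper never makes explicit but which is valid --- and then quote the behavior of component groups of semistable abelian varieties under tamely ramified quadratic base change ($|C_{w'}|=2|C_w|$, both cyclic, with $C_w$ embedding). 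This is cleaner and shorter, at the price of importing the monodromy-pairing machinery of SGA~7 that the paper deliberately avoids; it also relies on reading ``multiplicative place'' as semiabelian reduction with positive toric rank, which is indeed what the paper intends (its own appeal to the injection $C_w\hookrightarrow C_{w'}$ presupposes the same). For the second assertion your argument and the paper's coincide in substance: both reduce the relation $\sig(x)=x+P_w$ from Lemma~\ref{l:p_w-1} and use $\left<P_w,P_w\right>_{\lam}=1$ to see that $P_w$ dies in $C_{w'}$; the paper identifies the action of $\sig$ with that of $\Gal(L/k)$ via the description of $C_{w'}/4C_{w'}$ as $A(L')$ modulo its maximal $2$-divisible subgroup, whereas you use the tame product decomposition of $\Gal(L^{\un}_w/k_w)$ --- equivalent bookkeeping.
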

\begin{proof}
Since the reduction of $A$ at $w$ is multiplicative the reduction of $A_L$ at $w'$ is multiplicative and $C_w$ naturally embeds in $C_{w'}$. Since $w$ is part of a $2$-structure the $2$-part $C_w$ is cyclic of order $2$. Since all the $2$-torsion points of $A$ are defined over $k$ the group $C_{w'}$ cannot have $2$-torsion elements which do not come from $C_w$, and hence $C_{w'}$ is cyclic as well. Let us now show that the $2$-part of $C_{w'}$ is of order $4$. Let $L' = L\cdot k^{\un}_w$ be the compositum of $L$ and the maximal unramified extension $k^{\un}_w$. Since $L$ is ramified we see that $L'/k^{\un}_w$ is a non-trivial quadratic extension. Since $L'/L$ is unramified we may identify the Néron model $\cA_{L'}$ of $A_{L'}$ with the base change of the Néron model $\cA_L$ from $\OO_L$ to $\OO_{L'}$. In particular, we may identify the special fiber of $\cA_{L'}$ with the base change of the reduction of $\cA_L$ at $w$ from $\FF_w$ to $\ovl{\FF}_w$, and consequently identify the group of components of $\cA_{L'}$ with $C_{w'}$.

Since $w$ is odd the field $k^{\un}_w$ has a unique quadratic extension. In particular, if $Q \in A[2]$ is a point whose image in $C_w/2C_w$ is non-trivial then $L'$ must coincide with the splitting field $L_Q$ appearing in Lemma~\ref{l:p_w-1}. In particular, there exists an $x \in A(L')$ such that $2x = Q$ and $\sig(x) - x = P_w$, where $\sig$ is the non-trivial element of $\Gal(L'/k^{\un}_w)$. Now the reduction of $Q \in A(L')$ lies on a component of $C_{w'}$ of exact order $2$ (since this is the case for $C_w$ and the map $C_w \hrar C_{w'}$ induced by base change is injective), and hence the reduction of $x$ lies on a component $X \in C_{w'}$ of order exactly $4$. In particular, the order of the $2$-part of the cyclic group $C_{w'}$ is at least $4$. To show that it is exactly $4$, let by assume by contradiction that there exists a component $X' \in C_{w'}$ such that $2X' = X$. Multiplication by $2$ then induces a map of $\FF_w$-schemes $X'\lrar X$ which is an étale covering and a torsor under $A[2]$. It then follows that there exists a $\ovl{y} \in X'(\ovl{F}_w)$ such that $2\ovl{y} = \ovl{x}$, where $\ovl{x} \in X(\ovl{\FF}_w)$ is the reduction of $x$. By Hensel's lemma there exists a $y \in A(L')$ such that $2y = x$ and the reduction of $y$ is $\ovl{y}$. By Lemma~\ref{l:p_w-1} we now get that $2(\sig(y) - y) = \sig(x) - x = P_w$ and hence $z = \sig(y) - y$ is a torsion point of exact order $4$. Since $\sig(z) = -z$ and $z$ is of exact order $4$ it follows that $z$ is not defined over $k$. But this is now a contradiction since $P_w$ maps to the trivial element of $C_w/2C_w$ (since $\left<P_w,P_w\right>_{\lam} = 1 \in \mu_2$) and hence by Hensel's Lemma all the $4$-torsion points $z$ such that $2z = P_w$ are defined over $k^{\un}_w$. We may hence conclude that the $2$-part of $C_{w'}$ is a cyclic group of order $4$, as desired.

Since $w$ is odd, one consequence of the fact that $2$-part of $C_{w'}$ is cyclic of order $4$ is that the points of $A(L')$ whose image in $C_{w'}/4C_{w'}$ is trivial can be characterized as those points which are infinitely $2$-divisible in $A(L')$. One may hence identify $C_{w'}/4C_{w'}$ with the quotient of $A(L')$ by its maximal $2$-divisible subgroup, and use this identification to read off the action of $\Gal(L'/k^{\un}_w)$ on $C_{w'}/4C_{w'}$. In particular, since the reduction of the point $x \in A(L')$ from the discussion above lies on a component of exact order $4$ and the reduction of $\sig(x) = x + P_w$ lies on the same component this action must be trivial.
\end{proof}

\begin{cor}\label{c:p_w-3}
Let $Q,Q' \in A[2]$ be two points. Then $\alp := \left<\del(Q),Q'\right>_{\lam}$ is ramified at $w$ if and only if the images of both $Q$ and $Q'$ in $C_w/2C_w$ are non-trivial. 
\end{cor}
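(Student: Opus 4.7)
The plan is to evaluate the restriction of $\alpha$ to $H^1(k^{\un}_w,\mu_2)$ using the explicit cocycle description employed in the proof of Lemma~\ref{l:twist}: if $x \in A(\ovl{k})$ is any point with $2x = Q$, then $\alpha = \left<\partial(Q),Q'\right>_\lam$ is represented by the $1$-cocycle $\sig \mapsto \left<\sig(x)-x, Q'\right>_\lam \in \mu_2$. Since $\alpha$ is ramified at $w$ precisely when its restriction to the absolute Galois group of $k^{\un}_w$ is a non-trivial homomorphism to $\mu_2$, the whole statement reduces to analyzing this cocycle on $\Gal(\ovl{k}/k^{\un}_w)$.

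First I would dispose of the case in which the image of $Q$ in $C_w/2C_w$ is trivial. In this situation, Hensel's lemma (applied to the étale-at-$w$ multiplication-by-$2$ map on the identity component of the Néron model) produces a point $x \in A(k^{\un}_w)$ with $2x = Q$. Then $\sig(x) = x$ for every $\sig \in \Gal(\ovl{k}/k^{\un}_w)$, so the cocycle is identically trivial and $\alpha$ is unramified at $w$, regardless of $Q'$.

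Next, assume the image of $Q$ in $C_w/2C_w$ is non-trivial. Lemma~\ref{l:p_w-1} then applies: the minimal extension $L_Q/k^{\un}_w$ over which a point $x$ with $2x = Q$ is defined is the unique quadratic (tamely ramified) extension of $k^{\un}_w$, and for the non-trivial $\sig \in \Gal(L_Q/k^{\un}_w)$ one has $\sig(x) - x = P_w$. Consequently, the restriction of the cocycle to $\Gal(\ovl{k}/k^{\un}_w)$ factors through $\Gal(L_Q/k^{\un}_w) \cong \ZZ/2$ and sends its generator to $\left<P_w,Q'\right>_\lam$. This restriction is non-trivial, and therefore $\alpha$ is ramified at $w$, if and only if $\left<Q',P_w\right>_\lam = -1$, which by the defining property of $P_w$ is equivalent to the image of $Q'$ in $C_w/2C_w$ being non-trivial.

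Combining the two cases yields the stated equivalence. There is no real obstacle here: the statement is essentially a direct consequence of Lemma~\ref{l:p_w-1} together with the cocycle formula for $\partial(Q)$, and the only mild subtlety is to argue the Hensel/triviality step in the easy direction, which uses that $w$ is odd and multiplicative so that Hensel's lemma applies to lift a $2$-divisible component class in $C_w/2C_w$ to an actual $2$-divisible point in $A(k^{\un}_w)$.
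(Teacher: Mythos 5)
Your proof is correct and follows essentially the same route as the paper's: dispose of the case where $Q$ reduces trivially (the paper notes $\del(Q)$ itself is then unramified, which is the same Hensel's lemma argument you make explicit), and otherwise invoke Lemma~\ref{l:p_w-1} to compute the restriction of the cocycle to $\Gal(\ovl{k}/k^{\un}_w)$ as the homomorphism sending the generator of $\Gal(L_Q/k^{\un}_w)$ to $\left<P_w,Q'\right>_{\lam}$. No gaps.
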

\begin{proof}
If $Q$ reduces to the identity of $C_w/2C_w$ then the entire class $\del(Q) \in H^1(k,A[2])$ is unramified. We may hence assume that $Q$ reduces to the non-trivial element of $C_w/2C_w$. Let $Z_Q \subseteq A$ be the finite subscheme determined by the condition $2x = Q$ and let $L/k^{\un}_w$ be the minimal Galois extension such that all the points of $Z_Q$ are defined over $L$. Let $x \in Z_Q(L)$ be any point. By Lemma~\ref{l:p_w-1} we know that $G := \Gal(L/k^{\un}_w)$ is isomorphic to $\ZZ/2$ and that if $\sig \in G$ denotes the non-trivial element then $\sig(x) -x = P_w$.
Since $\del(Q)$ vanishes when restricted to $L$ the same holds for $\alp$ and by the inflation-restriction exact sequence the element $\alp|_{k^{\un}_w}$ comes from an element $\ovl{\alp} \in H^1(\Gal(L/k^{\un}_w,\mu_2) = H^1(G,\mu_2)$, which in turn can be written as a homomorphism
$$ \ovl{\alp}: G \lrar \mu_2 .$$
Furthermore, as in the proof of~\ref{l:p_w-1} the value $\ovl{\alp}(\sig)$ is given by the explicit formula
$$ \ovl{\alp}(\sig) = \left<\sig(x)-x,Q'\right>_{\lam} = \left<P_w,Q'\right>.$$
By the definition of $P_w$ we now get that $\alp|_{k^{\un}_w}$ is trivial if and only if $Q'$ reduces to the identity of $C_w/2C_w$, as desired.
\end{proof}

\subsection{The Cassels-Tate pairing}\label{s:CT}

Let $A$ be an abelian variety over a number field $k$ with dual abelian variety $\hat{A}$. Recall the \textbf{Cassels-Tate pairing}
$$ \langle \alp,\bet\rangle^{\CT}: \Sha(A) \times \Sha(\hat{A}) \lrar \QQ/\ZZ $$
whose kernel on either side is the corresponding group of divisible elements (which is believed to be trivial by the Tate-Shafarevich conjecture).

There are many equivalent ways of defining the Cassels-Tate pairing. In this paper it will be useful to have an explicit description of it via evaluation of Brauer element. We will hence recall the following definition, which is essentially the ``homogeneous space definition'' appearing in~\cite{PS99}. Let $\alp \in \Sha(A), \bet \in \Sha(\hat{A})$ be elements, we may describe the Cassels-Tate pairing $\langle \alp,\bet\rangle^{\CT}$ as follows. Let $Y_{\alp}$ be the torsor under $A$ classified by $\alp$. Since $\alp$ belongs to $\Sha(A)$ we have that $Y_\alp(\AA_k) \neq \emptyset$. The Galois module $\Pic^0(\ovl{Y}_{\alp})$ is canonically isomorphic to $\hat{A}(\ovl{k})$. 
Let $\Br_1(Y_{\alp}) = \Ker[\Br(Y_{\alp}) \lrar \Br(\ovl{Y}_{\alp})]$ be the algebraic Brauer group of $Y_{\alp}$. The Hochschild-Serre spectral sequence yields an isomorphism
$$ \Br_1(Y_{\alp})/\Br(k) \x{\cong}{\lrar} H^1(k,\Pic(\ovl{Y}_{\alp})) .$$
Let
$$ B_{\alp}:H^1(k,\hat{A}) \x{\cong}{\lrar} H^1(k,\Pic^0(\ovl{Y}_{\alp})) \lrar H^1(k,\Pic(\ovl{Y}_{\alp})) \cong \Br_1(Y_{\alp})/\Br(k) $$
denote the composed map. 
\begin{define}\label{d:CT}
Let $B \in \Br(Y_\alp)$ be an element whose class in $\Br(Y_\alp)/\Br(k)$ is $B_{\alp}(\bet)$ and let $(x_v) \in Y_\alp(\AA_k)$ be an adelic point. The the Cassels-Tate pairing of $\alp$ and $\bet$ is given by
$$ \langle \alp,\bet \rangle^{\CT} := \sum_{v \in \Om_k} B(x_v) \in \QQ/\ZZ $$
\end{define}

Given a principal polarization $\lam: A \x{\cong}{\lrar} \hat{A}$ we obtain an isomorphism $\Sha(A) \cong \Sha(\hat{A})$ and hence a self pairing 
$$ \left<,\right>^{\CT}_{\lambda}: \Sha(A) \times \Sha(A) \lrar \QQ/\ZZ $$

\begin{rem}\label{r:alternating}
The pairing $\left<,\right>^{\CT}_{\lambda}$ is not alternating in general. However, as is shown in~\cite{PR}, it is the case that $\left<,\right>^{\CT}_{\lambda}$ is alternating when $\lam$ is induced by a symmetric line bundle on $A$. The obstruction to realizing $\lam$ as a symmetric line bundle is an element $c_{\lam} \in H^1(k,A[2])$, which vanishes, for example, when the Galois action on $A[2]$ is trivial, see~\cite[Lemma 5.1]{HS15}. In particular, in all the cases considered in this paper the Cassels-Tate pairing associated to a principal polarization will always be alternating. 
\end{rem}

\subsection{Kummer varieties}\label{s:kummer}

In this section we will review some basic notions and constructions concerning Kummer varieties. Let $A$ be an abelian variety over $k$ (not necessarily principally polarized) of dimension $g \geq 2$. Let $\alp \in H^1(k,A[2])$ be a class and let $Y_{\alp}$ be the associated $2$-covering of $A$. Then $Y_\alp$ is equipped with a natural action of $A[2]$ and the base change of $Y_\alp$ to the algebraic closure of $k$ is $A[2]$-equivariantly isomorphic to the base change of $A$. More precisely, the class $\alp$ determines a distinguished Galois invariant subset of $A[2]$-equivariant isomorphisms $\Psi_\alp \subseteq \Iso_{A[2]}(\ovl{Y}_\alp,\ovl{A})$ which is a torsor under $A[2]$ with class $\alp$ (where $A[2]$ acts on $\Iso_{A[2]}(\ovl{Y}_\alp,\ovl{A})$ via post-composition). Using any one of the isomorphisms $\psi \in \Psi_\alp$ we may transport the antipodal involution $\iota_A: A \lrar A$ to an involution $\iota_{\psi}: \ovl{Y} \lrar \ovl{Y}$. Since $\iota_A$ commutes with translations by $A[2]$ it follows that $\iota_{\psi}$ is independent of $\psi$, and hence determines a Galois invariant automorphism $\ovl{Y}_\alp \lrar \ovl{Y}_\alp$. By classical Galois descent we may realize this Galois invariant automorphism uniquely as an automorphism $\iota_{Y_\alp}: Y_{\alp} \lrar Y_{\alp}$ defined over $k$. 

Let $Z_\alp \subseteq Y_\alp$ denote the fixed locus of $\iota_{Y_\alp}$ (considered as a $0$-dimensional subscheme). We note that the points of $Z_\alp(\ovl{k})$ are mapped to $A[2]$ by any of the isomorphisms $\psi \in \Psi_\alp$, and the Galois invariant collection of isomorphisms $\{\psi|_{\ovl{Z}_\alp} | \psi \in \Psi_\alp\}$ exhibits $Z_\alp$ as a torsor under $A[2]$ with class $\alp$. The quotient $(Y_\alp)/\iota_{Y_\alp}$ has $Z_\alp$ as its singular locus and this singularity can be resolved by a single blow-up. Alternatively, one can first consider the blow-up $\wtl{Y}_\alp$ of $Y_\alp$ at $Z_\alp$, and then take the quotient of $\wtl{Y}_\alp$ by the induced involution $\iota_{\wtl{Y}_\alp}$. A local calculation then shows that $\wtl{Y}_\alp/\iota_{\wtl{Y}_\alp}$ is \textbf{smooth}.

\begin{define}
The \textbf{Kummer variety} associated to $Y_\alp$ is the variety $$\Kum(Y_\alp) = \wtl{Y}_\alp/\iota_{\wtl{Y}_\alp}$$
\end{define}

Let now $X_\alp = \Kum(Y_\alp)$ be the Kummer variety of $Y_\alp$. We will denote by $D_\alp \subseteq \wtl{Y}_\alp$ the exceptional divisor. Since the action of $\iota_{\wtl{Y}_\alp}$ on $D_\alp$ is constant we will abuse notation and denote the image of $D_\alp$ in $X_\alp$ by the same name. Let us denote by $U_\alp = \wtl{Y}_\alp \setminus D_\alp$ and $W_\alp = X_\alp \setminus D_\alp$, so that the quotient map $\wtl{Y}_\alp \lrar X_\alp$ restricts to an \'etale $2$-covering $p_\alp:U_\alp \lrar W_\alp$. Let $\iota_{U_\alp}: U_\alp \lrar U_\alp$ denote the restriction of $\iota_{Y_\alp}$. We note that we may also identify $U_\alp$ with the complement of the $0$-dimensional scheme $Z_\alp$ in $Y_\alp$. Since $\dim Y_\alp \geq 2$ we may consequently identify $H^1(\ovl{U}_\alp,\QQ/\ZZ(1))$ with $H^1(\ovl{Y}_\alp,\QQ/\ZZ(1))$ and $H^1(\ovl{U}_\alp,\QQ/\ZZ(1))^{\iota_{U_\alp}}$ with $\hat{A}[2]$. Now since $H^2(\left<\iota_{U_\alp}\right>,H^0(\ovl{U}_\alp,\QQ/\ZZ(1))) = 0$ the Hochshild-Serre spectral sequence now yields a short exact sequence of Galois modules
\begin{equation}\label{e:hs}
0 \lrar \mu_2 \x{\iota}{\lrar} H^1(\ovl{W}_\alp,\QQ/\ZZ(1)) \x{p_\alp^*}{\lrar} \hat{A}[2] \lrar 0 
\end{equation}
where the image of $\iota$ is spanned the element $[p_\alp] \in H^1(\ovl{W}_\alp,\mu_2) \subseteq H^1(\ovl{W}_\alp,\QQ/\ZZ(1))$ which classifies the $2$-covering $p_\alp:U_\alp \lrar W_\alp$. 

Our next goal is to describe the Galois module $H^1(\ovl{W}_\alp,\QQ/\ZZ(1))$ in more explicit terms. For this it will be convenient to use the following terminology. Let us say that a map of schemes $L: \ovl{Z}_\alp \lrar \mu_2$ is \textbf{affine-linear} if the there exists a $Q \in \hat{A}[2]$ such that for every geometric point $x \in Z_\alp(\ovl{k})$ and every $P \in A[2]$ we have $L(Px) = \left<Q,P\right> \cdot L(x)$ (here the notation $Px$ denotes the action of $A[2]$ on its torsor $Z_\alp$). We will refer to $Q$ as the \textbf{homogeneous part} of $L$. We note that $Q$ is uniquely determined by $L$. We will denote by $\Aff(\ovl{Z}_\alp,\mu_2)$ the abelian group of affine-linear maps (under pointwise multiplication). The action of $\Gam_k$ on $\ovl{Z}_\alp$ induces an action on $\Aff(\ovl{Z}_\alp,\mu_2)$ by pre-composition and will consequently consider $\Aff(\ovl{Z}_\alp,\mu_2)$ as a Galois module. The map 
$$ h_\alp: \Aff(\ovl{Z}_\alp,\mu_2) \lrar \hat{A}[2] $$ 
which assigns to each affine-linear map its homogeneous part is then homomorphism of Galois modules. The following lemma is a variant of~\cite[Proposition 2.3]{SZ16} and is essentially reformulated to make the Galois action more apparent. Here we consider $\Aff(\ovl{Z}_\alp,\mu_2)$ as a Galois submodule of $H^0(\ovl{Z}_\alp,\QQ/\ZZ)$, by identifying elements of the latter with set theoretic functions $Z_\alp(\ovl{k}) \lrar \QQ/\ZZ$ and using the embedding $\mu_2 \cong \frac{1}{2}\ZZ/\ZZ \hrar \QQ/\ZZ$.

\begin{lem}[{cf.~\cite[Proposition 2.3]{SZ16}}]\label{l:W}
The residue map $H^1(\ovl{W}_\alp,\QQ/\ZZ(1)) \lrar H^0(\ovl{D}_\alp,\QQ/\ZZ)$ is injective and its image coincides with $\Aff(\ovl{Z}_\alp,\mu_2)$. Furthermore, the resulting composed map $H^1(\ovl{W}_\alp,\QQ/\ZZ(1)) \lrar \Aff(\ovl{Z}_\alp,\mu_2) \x{h_\alp}{\lrar} \hat{A}[2]$ coincides with the map $p_\alp^*$ appearing in~\eqref{e:hs}.
\end{lem}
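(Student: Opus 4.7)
The plan is to use Grothendieck purity to encode $H^1(\ovl{W}_\alp, \QQ/\ZZ(1))$ via residues along $\ovl{D}_\alp$, then to match these residues with affine-linear functions on $\ovl{Z}_\alp$ by a local analysis invoking Mumford's theory of symmetric line bundles, and finally to compare everything with~\eqref{e:hs} via a five-lemma argument.

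First I would apply the Gysin localization sequence for the smooth pair $(\ovl{X}_\alp, \ovl{D}_\alp)$,
$$H^1(\ovl{X}_\alp, \QQ/\ZZ(1)) \lrar H^1(\ovl{W}_\alp, \QQ/\ZZ(1)) \x{\mathrm{res}}{\lrar} H^0(\ovl{D}_\alp, \QQ/\ZZ) \lrar H^2(\ovl{X}_\alp, \QQ/\ZZ(1)),$$
and use the vanishing $H^1(\ovl{X}_\alp, \QQ/\ZZ(1)) = 0$ for Kummer varieties (as in the analogous computation of~\cite[Proposition 2.3]{SZ16}) to conclude that $\mathrm{res}$ is injective. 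The components of $\ovl{D}_\alp$ are naturally indexed by $\ovl{Z}_\alp$, so $H^0(\ovl{D}_\alp, \QQ/\ZZ) \cong \Map(\ovl{Z}_\alp, \QQ/\ZZ)$. By~\eqref{e:hs} the source has order $2\cdot|\hat{A}[2]|$, which matches $\Aff(\ovl{Z}_\alp, \mu_2)$ via the analogous sequence $0 \to \mu_2 \to \Aff(\ovl{Z}_\alp, \mu_2) \x{h_\alp}{\to} \hat{A}[2] \to 0$ (with $\mu_2$ embedded as constants). It therefore suffices to show both that $\mathrm{res}$ lands in $\Aff(\ovl{Z}_\alp, \mu_2)$ and that $h_\alp \circ \mathrm{res} = p_\alp^*$.

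I would verify these simultaneously by exhibiting a morphism between the two short exact sequences with middle vertical $\mathrm{res}$, right vertical the identity on $\hat A[2]$, and left vertical the natural isomorphism $\mu_2 \cong \mu_2$. The left square is immediate: $\iota$ sends the nontrivial element of $\mu_2$ to $[p_\alp]$, and since $\wtl{Y}_\alp \lrar X_\alp$ is tamely ramified of index $2$ along each component of $\ovl{D}_\alp$, the residue of $[p_\alp]$ is the constant function with value $\tfrac{1}{2} \in \tfrac{1}{2}\ZZ/\ZZ \cong \mu_2$.

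The main obstacle is the commutativity of the right square. Given $\xi \in H^1(\ovl{W}_\alp, \mu_2)$ (which we may assume is $2$-torsion) with $p_\alp^*\xi = Q \in \hat{A}[2]$, I would identify $\ovl{U}_\alp$ with $\ovl{A}\setminus A[2]$ via any $\psi \in \Psi_\alp$ and extend $p_\alp^*\xi$ across the $A[2]$ punctures to a $\mu_2$-torsor $T_Q$ on $\ovl{A}$ of class $Q$; the descent of $\xi$ to $\ovl{W}_\alp$ is then equivalent to a choice of $\iota$-linearization of $T_Q$. A local étale analysis at each $P \in \ovl{Z}_\alp$, modelled on the blow-up of $\AA^g$ at the origin with $\iota$ acting as $-1$, identifies the residue of $\xi$ at the $P$-th component of $\ovl{D}_\alp$ with the fiberwise sign $\epsilon(P) \in \mu_2$ by which the chosen $\iota$-linearization acts on the stalk $T_{Q,P}$. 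Since $T_Q$ has degree zero, its Mumford polarization $\phi_{T_Q}$ vanishes, so Mumford's quadratic function $\epsilon : A[2] \to \mu_2$ has trivial associated bilinear form; it is therefore a genuine character of the $A[2]$-torsor $\ovl{Z}_\alp$, satisfying $\epsilon(Px) = \left<Q,P\right>\epsilon(x)$, i.e., an affine-linear function with homogeneous part $Q$. The two $\iota$-linearizations differ by a global sign, matching the $[p_\alp]$-ambiguity in the choice of lift $\xi$ and the left square. Hence $\mathrm{res}(\xi) \in \Aff(\ovl{Z}_\alp, \mu_2)$ with $h_\alp(\mathrm{res}(\xi)) = Q = p_\alp^*\xi$, and the five lemma completes the proof.
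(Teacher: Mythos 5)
Your architecture is sound and genuinely different from the paper's. The paper argues by explicit construction and enumeration: for each nonzero $Q\in\hat A[2]$ it takes the dual isogeny $\hat f_Q:\hat B\to A$ (the total space of your $T_Q$), translates it by each $P\in A[2]$ to produce coverings $r_{Q,P}$ of $\Kum(A)$, computes their residues to be the affine-linear functions $x\mapsto\left<x,Q\right>\left<P,Q\right>$, and then counts against the order of $H^1(\ovl W_\alp,\QQ/\ZZ(1))$ supplied by~\eqref{e:hs} to conclude that every class has been exhibited and that the residue map is injective; the vanishing of $H^1(\ovl X_\alp,\QQ/\ZZ(1))$ is deduced afterwards as a corollary, not assumed. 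Your route replaces the translations by the two $\iota$-linearizations of $T_Q$ and replaces the counting by the five lemma, which is cleaner and makes the $\mu_2$-ambiguity conceptual. One caveat on the injectivity step: you cannot quote~\cite[Proposition 2.3]{SZ16} for $H^1(\ovl X_\alp,\QQ/\ZZ(1))=0$, since that proposition is essentially the statement being reproved; cite instead the simple connectedness of Kummer varieties of dimension $\geq 2$, or run the paper's counting, which gives injectivity without this input.

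The one step that needs repair is the deduction of $\epsilon(Px)=\left<Q,P\right>\epsilon(x)$ from the vanishing of $\phi_{T_Q}$: as written this is a non sequitur. Triviality of the associated bilinear form only tells you that $\epsilon$ is affine-linear with respect to \emph{some} homogeneous part, i.e., that after normalizing at a base point it is \emph{some} character $\left<c,-\right>$ of $A[2]$; the content of the ``furthermore'' clause is precisely that $c=Q$. This does follow from Mumford's theory, but from the comparison formula $e^{L}_*=e^{L'}_*\cdot e_2\bigl(-,[L\otimes (L')^{-1}]\bigr)$ for algebraically equivalent symmetric line bundles (applied with $L'=\OO_A$), not from the vanishing of the polarization alone; equivalently, one checks directly that the normalized involution of $\hat B$ fixes the fiber of $\hat f_Q$ over $x\in A[2]$ pointwise if and only if $x\in\hat f_Q(\hat B[2])=Q^{\perp}$, which is in substance the computation the paper performs. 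With that identification supplied, your local model for the residue (split covering when $\epsilon(P)=1$; the antidiagonal quotient, hence ramified with residue $\tfrac12$, when $\epsilon(P)=-1$), the matching of the global sign ambiguity with $[p_\alp]$, and the concluding five-lemma argument are all correct.
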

\begin{proof}
For the purpose of this lemma we may as well extend our scalars to the algebraic closure. We may hence assume without loss of generality that $\alp = 0$ (i.e., that $X = \Kum(A)$) and that the Galois action on $A[2]$ is constant. We will consequently write $A$ instead of $Y$, $A[2]$ instead of $Z_\alp$, $W$ instead of $W_\alp$ and $U$ instead of $U_\alp$. Let $Q \in \hat{A}[2]$ be a non-zero element and let $f_Q:\hat{A} \lrar B$ be an isogeny of abelian varieties whose kernel is spanned by $Q$. Then the dual isogeny $\hat{f}_Q: \hat{B} \lrar A$ is the unramified $2$-covering classified by $Q \in \hat{A}[2] \cong H^1(\ovl{A},\mu_2)$. Let $C$ be the variety obtained from $\hat{B}$ by blowing up the pre-image $M = \hat{q}^{-1}(A[2])$ of the $2$-torsion points of $A$. Given another point $P \in A[2]$ we may consider the map $\wtl{f}_{Q,P}: \hat{B} \lrar A$ given by $\wtl{f}_P(x) = \wtl{q}(x) + P$. For each $P \in A[2]$ the map $\wtl{f}_{Q,P}$ is an unramified $2$-covering sending $M$ to $A[2]$ and hence induces an unramified $2$-covering 
$$ q_{Q,P}:C \lrar \wtl{A} ,$$ 
where $\wtl{A}$ denotes the blow-up of $A$ at $A[2]$. Consider the automorphisms $\iota_C: C \lrar C$ and $\iota_{\wtl{A}}: \wtl{A} \lrar \wtl{A}$ induced by the respective antipodal involutions. Since $\wtl{q}_P$ commutes with the respective antipodal involutions the same holds for $r$. We then obtained an induced (ramified) $2$-covering map between smooth varieties
$$ q_{Q,P}: C/\iota_C \lrar \wtl{A}/\iota_{\wtl{A}} = \Kum(A) .$$
Note that $q_{Q,P}$ is unramified over the complement $W \subseteq \Kum(A)$: indeed, any geometric point $x \in W(\ovl{k})$ has two points lying above it in $\wtl{A}$, and hence four points lying above it in $C$, which must give two distinct points in $C/\iota_C$. The pullback of $q_{Q,P}$ to $W$ hence determines an unramified $2$-covering
$$ r_{Q,P}: V \lrar W $$
which determines an element $[r_{Q,P}] \in H^1(W,\mu_2) \subseteq H^1(\ovl{W},\QQ/\ZZ(1))$. Now consider the map $\hat{f}_{Q,P}': \hat{B} \setminus M \lrar U$ obtained by restricting the domain and codomain of $\hat{f}_{Q,P}$. Then $\hat{f}_{Q,P}'$ is an étale $2$-covering and the induced map $\hat{B} \setminus M \lrar V \times_{W} U$ is étale of degree $1$, and hence an isomorphism. It follows that $p^*[r_{Q,P}] = [\hat{f}_{Q,P}'] \in H^1(U,\mu_2)$. On the other hand, the inclusion $U \subseteq A$ as the complement of $A[2]$ induces an isomorphism $H^1(\ovl{U},\mu_2) \cong H^1(\ovl{A},\mu_2) \cong \hat{A}[2]$ which identifies the image of $[\hat{f}_{Q,P}']$ in $H^1(\ovl{U},\mu_2)$ with $Q \in \hat{A}[2]$. Finally, a direct examination verifies that $r_{Q,P}$ is ramified at the exceptional divisor $D_x \subseteq X \setminus W$ corresponding to $x \in A[2]$ if and only if $x$ is not in the image of ${\hat{f}_{Q,P}}|_{\hat{B}[2]}: \hat{B}[2] \lrar A[2]$. By the compatibility of the Weil pairing with duality of isogenies we see that the image of $\hat{f}_{Q,0}|_{\hat{B}[2]}$ consists of exactly those $x \in A[2]$ such that the Weil pairing $\left<x,Q\right>$ is trivial, and hence the image of $\hat{f}_{Q,P}$ consists of those $x \in A[2]$ such that $\left<x-P,Q\right>$ is trivial. It then follows that the residue $\res_{D}([r_{Q,P}]) \in H^0(A[2],\QQ/\ZZ) \cong (\QQ/\ZZ)^{A[2]}$ can be identified with the affine-linear function $L_{Q,P}(x) = \left<x,Q\right>\left<P,Q\right>$. 

We note that by varying $P$ we obtain in this way for each non-zero $Q \in A[2]$ at least two different elements of $H^1(\ovl{W},\mu_2)$ whose image in $H^1(\ovl{U},\mu_2) \cong \hat{A}[2]$ is $Q$. By the short exact sequence~\eqref{e:hs} we have thus covered \textbf{all} elements of $H^1(\ovl{W},\QQ/\ZZ(1))$ whose image in $\hat{A}[2]$ is non-trivial. On the other hand, the only non-trivial element of the kernel $H^1(\ovl{W},\QQ/\ZZ(1)) \lrar \hat{A}[2]$ is the one classifying the covering $\ovl{U} \lrar \ovl{W}$, which is ramified at all the components $D_x$, and whose residue hence corresponds to the constant function $A[2] \lrar \QQ/\ZZ$ with value $1/2$. Finally, the trivial element has trivial residue, which corresponds to the constant function $A[2] \lrar \QQ/\ZZ$ with value $0$. This concludes the enumeration of all element of $H^1(\ovl{W},\QQ/\ZZ(1))$, and so the proof is complete.   
\end{proof}

By Lemma~\ref{l:W} the residue map induces an isomorphism of Galois modules $H^1(\ovl{W}_\alp,\QQ/\ZZ(1)) \cong \Aff(\ovl{Z}_\alp,\mu_2)$, and we may rewrite the short exact sequence~\eqref{e:hs} as
\begin{equation}\label{e:sko-2}
\xymatrix{
0 \ar[r] & \mu_2 \ar^-{\iota}[r] & \Aff(\ovl{Z}_{\alp},\mu_2) \ar^-{h_\alp}[r] & \hat{A}[2] \ar[r] & 0 \\
}
\end{equation}
where $\iota:\mu_2 \hrar \Aff(\ovl{Z}_{\alp},\mu_2)$ is the inclusion of constant affine-linear functions. We note that since $\ovl{U}_\alp \cong \ovl{Y}_{\alp} \setminus \ovl{Z}_\alp \cong \ovl{A} \setminus A[2]$ has no non-constant invertible functions the same holds for $\ovl{W}_\alp$ and so we have a canonical isomorphism of Galois modules $H^1(\ovl{W}_\alp,\QQ/\ZZ(1)) \cong \Pic(\ovl{W}_\alp)_{\tor}$. We note that the injectivity of the residue map $H^1(\ovl{W}_\alp,\QQ/\ZZ(1)) \lrar H^0(\ovl{D}_\alp,\QQ/\ZZ)$ implies, in particular, that $H^1(\ovl{X}_\alp,\QQ/\ZZ(1)) = 0$ and hence that $\Pic(\ovl{X}_\alp)$ is \textbf{torsion free}.

\begin{rem}
Consider the pullback map $\Pic(\ovl{X}_\alp) \lrar \Pic(\ovl{W}_\alp)$ on geometric Picard groups. The inverse image $\Pi \subseteq \Pic(\ovl{X}_\alp)$ of the torsion subgroup $\Pic(\ovl{W}_\alp)_{\tor} \subseteq \Pic(\ovl{W}_\alp)$ is called the \textbf{Kummer lattice} in~\cite{SZ16}. Given an affine-linear map $L: \ovl{Z}_\alp \lrar \mu_2$, we may realize the corresponding element of $\Pic(\ovl{W}_\alp)_{\tor}$ as a $2$-covering of $\ovl{W}_\alp$. This $2$-covering extends to a $2$-covering of $\ovl{X}_\alp$ which is ramified along $D_x$ if and only if $L(x) = -1$. It then follows that there exists a class $E_L \in \Pic(\ovl{X}_\alp)$ such that 
$$ 2E_L = \sum_{x \in Z_\alp(\ovl{k}) | L(x) = -1} [D_x] $$ 
and the image of $E_L$ in $\Pic(\ovl{W}_\alp)_{\tor}$ is $L$. In particular, the Kummer lattice is generated over $\Pi_0$ by the classes $E_L$. This description of the Kummer lattice was established by Nikulin (\cite{Ni75}) in the case of Kummer surfaces and extended to general Kummer varieties by Skorobogatov and Zarhin in~\cite{SZ16}.
\end{rem}

From now until the rest of this section we \textbf{fix the assumption} that the Galois action on $A[2]$ is \textbf{constant}. Recall from \S\ref{s:CT} that we have a homomorphism
$$ B_\alp: H^1(k,\hat{A}) \lrar \Br(Y_\alp)/\Br(k) $$
which can be used to define the Cassels-Tate pairing between $\alp$ and $\bet$.
It will be useful to consider similar types of Brauer elements on $W_\alp$. Using the map $H^1(k,\Pic(\ovl{W}_\alp)) \lrar \Br(W_\alp)/\Br(k)$ furnished by the Hochschild-Serre spectral sequence, the map
$$\Aff(\ovl{Z}_{\alp},\mu_2) \cong \Pic(\ovl{W}_\alp)_{\tor} \lrar \Pic(\ovl{W}_\alp) $$ 
determines a map 
\begin{equation}\label{e:C-alp} 
C_\alp: H^1(k,\Aff(\ovl{Z}_{\alp},\mu_2)) \lrar \Br(W_\alp)/\Br(k) 
\end{equation}
which fits into a commutative square
$$ \xymatrix{
H^1(k, \Aff(\ovl{Z}_{\alp},\mu_2)) \ar^{C_\alp}[d]\ar^-{(h_\alp)_*}[r] & H^1(k,\hat{A}[2]) \ar^{B_\alp}[d] \\
\Br(W_\alp)/\Br(k) \ar[r] & \Br(Y_\alp)/\Br(k) \\
}$$
where the bottom horizontal map is induced by the composition of $p_\alp^*: \Br(W_\alp) \lrar \Br(U_\alp)$ and the natural isomorphism $\Br(U_\alp) \cong \Br(Y_\alp)$. 
It will be useful to recall the following general construction:

\begin{const}\label{c:galois}
Let $G$ be a group acting on an abelian group $M$ and let $f: \Gam_k \lrar G$ be a homorphism, through which we can consider $M$ as a Galois module. Let $x \in H^1(k,M)$ be an element. Then $x$ classifies a torsor $Z_x$ under $M$, and the Galois action on $Z_x(\ovl{k})$ is via the semi-direct product $M \rtimes G$. The kernel of the resulting homomorphism $\Gam_k \lrar M \rtimes G$ is a normal subgroup $\Gam_x \subseteq \Gam_k$ and we will refer to the corresponding normal extension $k_x/k$ as the \textbf{splitting field} of $x$. We then obtain a natural injective homomorphism $\ovl{x}: \Gal(k_x/k) \lrar M \rtimes G$. We note that the field $k_x$ contains the field $k_f$ associated to the kernel of $f: \Gam_k \lrar G$ and the restriction of $\ovl{x}$ to $\Gal(k_x/k_f)$ lands in $M$. 
Finally, the homomorphism $f: \Gam_k \lrar G$ descends to an injective homomorphism $\ovl{f}: \Gal(k_f/k) \lrar G$, and we obtain a commutative diagram with exact rows and injective vertical maps
\begin{equation}
\xymatrix@C=1.1em{
1 \ar[r] & \Gal(k_x/k_f) \ar[r]\ar@{^(->}^{\ovl{x}|_{k_f}}[d] & \Gal(k_x/k) \ar[r]\ar@{^(->}^{\ovl{x}}[d] & \Gal(k_f/k) \ar[r]\ar@{^(->}^{\ovl{f}}[d] & 1 \\
1 \ar[r] & M \ar[r] & M \rtimes G \ar[r] & G \ar[r] & 1 \\
}
\end{equation}
Here, the notation $\ovl{x}|_{k_f}$ is meant to suggest that we can think of the left most vertical homomorphism as the one associated to the class $x|_{k_f} \in H^1(k_f,M)$ by the same construction.
\end{const}


Now let $\bet \in H^1(k,\hat{A}[2])$ be an element and suppose that $\theta \in H^1(k, \Aff(\ovl{Z}_{\alp},\mu_2))$ is such that $(h_{\alp})_*(\theta) = \bet \in H^1(k,\hat{A}[2])$. Applying Construction~\ref{c:galois} to $\alp \in H^1(k,A[2])$ we obtain an injective homomorphism $\ovl{\alp}:\Gal(k_\alp/k) \lrar A[2]$. The action of $\Gam_k$ on $\Aff(\ovl{Z}_{\alp},\mu_2))$ is given via the composition 
$$ \Gam_k \lrar \Gal(k_\alp/k) \x{\ovl{\alp}}{\lrar} A[2] ,$$ 
where $A[2]$ acts on $\Aff(\ovl{Z}_{\alp},\mu_2))$ via pre-composition. Applying Construction~\ref{c:galois} again we obtain a commutative diagram with exact rows and injective vertical maps
\begin{equation}\label{e:split}
\xymatrix@C=1.1em{
1 \ar[r] & \Gal(k_{\theta}/k_\alp) \ar[r]\ar@{^(->}^{\ovl{\theta}|_{k_\alp}}[d] & \Gal(k_{\theta}/k) \ar[r]\ar@{^(->}^{\ovl{\theta}}[d] & \Gal(k_\alp/k) \ar[r]\ar@{^(->}^{\ovl{\alp}}[d] & 1 \\
1 \ar[r] & \Aff(\ovl{Z}_{\alp},\mu_2) \ar[r] & \Aff(\ovl{Z}_{\alp},\mu_2) \rtimes A[2] \ar[r] & A[2] \ar[r] & 1 \\
}
\end{equation}
Here we may also consider $\ovl{\theta}|_{k_\alp}$ as the homomorphisms associated to $\theta|_{k_\alp} \in H^1(k_\alp,\Aff(\ovl{Z}_{\alp},\mu_2))$ by Construction~\ref{c:galois}. Let $k_{\alp,\bet}$ be the compositum of $k_\alp$ and $k_{\bet}$. By the naturality of Construction~\ref{c:galois} we also see that the Galois group $\Gal(k_{\theta}/k_\alp)$ sits in a commutative diagram with exact rows and injective vertical maps:
\begin{equation}\label{e:split-2}
\xymatrix@C=1.1em{
1 \ar[r] & \Gal(k_{\theta}/k_{\alp,\bet}) \ar[r]\ar@{^(->}^{\ovl{\theta}|_{k_{\alp,\bet}}}[d] & \Gal(k_{\theta}/k_{\alp}) \ar[r]\ar@{^(->}^{\ovl{\theta}|_{k_\alp}}[d] & \Gal(k_{\alp,\bet}/k_\alp) \ar[r]\ar@{^(->}^{\ovl{\bet}|_{k_\alp}}[d] & 1 \\
1 \ar[r] & \mu_2 \ar[r] & \Aff(\ovl{Z}_{\alp},\mu_2) \ar[r] & \hat{A}[2] \ar[r] & 1 \\
}
\end{equation}  
where the bottom row is the sequence~\eqref{e:sko-2}. Here $\ovl{\theta}|_{k_\alp},\ovl{\bet}|_{k_\alp}$ are the homomorphisms associated to the classes $\theta|_{k_\alp} \in H^1(k_\alp,\Aff(\ovl{Z}_{\alp},\mu_2)), \bet|_{k_\alp} \in H^1(k_\alp,\hat{A}[2])$ respectively by Construction~\ref{c:galois}, and $\ovl{\theta}|_{k_{\alp,\bet}}$ is the homomorphism associated to the class $\theta|_{k_{\alp,\bet}}$, when we consider it as belonging to $H^1(k_{\alp,\bet},\mu_2)$ (indeed, the homorphism $\iota_*:H^1(k_{\alp,\bet},\mu_2) \lrar H^1(k_{\alp,\bet},\Aff(\ovl{Z}_{\alp},\mu_2))$ induced by the sequence~\eqref{e:sko-2} is injective and its image contains the restricted class $\theta|_{k_{\alp,\bet}}$).

%
%
%
%

The following proposition plays a key role in the analysis of the behavior of the Cassels-Tate pairing under quadratic twists (see Proposition~\ref{p:CT-twist}):
\begin{prop}\label{p:descent}\
\begin{enumerate}[(1)]
\item
An element $\bet \in H^1(k,\hat{A}[2])$ can be lifted to an element $\theta \in H^1(k, \Aff(\ovl{Z}_{\alp},\mu_2))$ if and only if $\alp \cup \bet = 1 \in H^2(k,\mu_2)$. Furthermore, if $S$ is a set of places which contains a set of generators for the class group of $k$ and $\alp,\bet$ are unramified outside $S$ then $\theta$ can be chosen so that the splitting field $k_{\theta}$ is unramified outside $S$.
\item
The image of the $\res_{D_\alp}(C_{\alp}(\eps)) \in H^1(D_\alp,\QQ/\ZZ)$ in $H^1(D_\alp \otimes_k k_{\alp,\bet},\QQ/\ZZ)$ is constant and comes from the element $u_{\theta} \in H^1(k_{\alp,\bet},\ZZ/2)$ which classifies the at most quadratic extension $k_{\theta}/k_{\alp,\bet}$.
\end{enumerate}
\end{prop}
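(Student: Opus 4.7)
The plan is to analyze the long exact sequence coming from~\eqref{e:sko-2} for part (1), and to match the Brauer residue at $D_\alp$ with the Hochschild-Serre edge map on $\ovl{D}_\alp$ for part (2).

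For the lifting obstruction in part~(1), I would first identify the extension class of~\eqref{e:sko-2} in $\Ext^1_{\Gam_k}(\hat{A}[2],\mu_2)\cong H^1(k,\Hom(\hat{A}[2],\mu_2))\cong H^1(k,A[2])$ (the last isomorphism by the Weil pairing) with the class $\alp$. Indeed, choosing any point of $Z_\alp(\ovl{k})$ produces, via Lemma~\ref{l:W}, an $\ovl{k}$-linear splitting of the sequence~\eqref{e:sko-2} as $\mu_2\oplus\hat{A}[2]$, and the obstruction to Galois-equivariance of this splitting is exactly the $A[2]$-torsor structure on $Z_\alp$, i.e.\ the class $\alp$. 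By the standard relationship between connecting maps and Yoneda extension classes, the coboundary $\partial: H^1(k,\hat{A}[2])\to H^2(k,\mu_2)$ is then cup product with $\alp$ through the Weil pairing, so $\bet$ lifts if and only if $\alp\cup\bet=1$.

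For the unramified refinement, I would start with any global lift $\theta_0\in H^1(k,\Aff(\ovl{Z}_\alp,\mu_2))$ of $\bet$. Since $\alp$ is unramified outside $S$, the whole sequence~\eqref{e:sko-2} extends to a short exact sequence of finite \'etale sheaves over $\spec\OO_{k,S}$, and since $\Br(\OO_v)=0$ for every $v\notin S$, we can locally lift $\bet|_{\OO_v}$ to an unramified class $\theta_v^{\nr}\in H^1(\OO_v,\Aff(\ovl{Z}_\alp,\mu_2))$. The difference $\theta_0|_{k_v}-\theta_v^{\nr}$ is in the image of $\iota_*:H^1(k_v,\mu_2)\lrar H^1(k_v,\Aff(\ovl{Z}_\alp,\mu_2))$ and so corresponds to an element of $k_v^*/(k_v^*)^2$, whose valuation modulo $2$ I take as my obstruction. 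As $v$ varies over the finitely many places of $\Om_k\setminus S$ at which $\theta_0$ is ramified, these valuations assemble into a divisor $D$ on $\spec\OO_{k,S}$. The assumption that $S$ generates the class group forces $\Pic(\OO_{k,S})=0$, so $D=\divisor(\eta)$ for some $\eta\in k^*$; replacing $\theta_0$ by $\theta_0-\iota_*[\eta]$ simultaneously kills all off-$S$ ramification, and by the localisation/purity sequence for \'etale cohomology the modified class comes from $H^1(\OO_{k,S},\Aff(\ovl{Z}_\alp,\mu_2))$.

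For part~(2), the key input is the naturality statement that the Brauer residue $\res_{D_\alp}:\Br(W_\alp)/\Br(k)\to H^1(D_\alp,\QQ/\ZZ)$ fits into a commutative square
\[
\begin{CD}
H^1(k,\Aff(\ovl{Z}_\alp,\mu_2)) @>{C_\alp}>> \Br(W_\alp)/\Br(k) \\
@VVV @VV{\res_{D_\alp}}V \\
H^1(k,H^0(\ovl{D}_\alp,\QQ/\ZZ)) @>>> H^1(D_\alp,\QQ/\ZZ)
\end{CD}
\]
whose left vertical is induced by the inclusion $\Aff(\ovl{Z}_\alp,\mu_2)\hookrightarrow H^0(\ovl{D}_\alp,\QQ/\ZZ)$ provided by Lemma~\ref{l:W}, and whose bottom horizontal is the Hochschild-Serre edge map for $D_\alp$ (an isomorphism since the components of $\ovl{D}_\alp$ are projective spaces and therefore have trivial $H^1(\cdot,\QQ/\ZZ(1))$). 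By the diagram~\eqref{e:split-2} and Construction~\ref{c:galois}, the restriction $\theta|_{k_{\alp,\bet}}$ factors through $\iota_*: H^1(k_{\alp,\bet},\mu_2)\to H^1(k_{\alp,\bet},\Aff(\ovl{Z}_\alp,\mu_2))$, and the corresponding element of $H^1(k_{\alp,\bet},\mu_2)$ is precisely $u_\theta$. Since the composite $\mu_2\hookrightarrow\Aff(\ovl{Z}_\alp,\mu_2)\hookrightarrow H^0(\ovl{D}_\alp,\QQ/\ZZ)$ is the inclusion of constant functions, chasing $u_\theta$ through the left vertical and bottom horizontal of the square yields exactly the pullback of $u_\theta\in H^1(k_{\alp,\bet},\ZZ/2)$ along $D_\alp\otimes_k k_{\alp,\bet}\to\spec k_{\alp,\bet}$, proving the assertion. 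The main obstacle is verifying the commutativity of this residue square, which amounts to matching the Brauer-theoretic residue (via purity/Gysin for $D_\alp\hookrightarrow X_\alp$) with the Picard-theoretic residue of Lemma~\ref{l:W}, and is essentially a naturality statement for Hochschild-Serre that needs to be carefully spelled out.
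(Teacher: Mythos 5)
Part (1) of your proposal is essentially the paper's argument: the paper establishes $\partial\bet=\alp\cup\bet$ by the concrete version of your extension-class computation (splitting $\Aff(\ovl{Z}_{\alp},\mu_2)\cong\mu_2\oplus\hat{A}[2]$ via a base point of $Z_\alp(\ovl{k})$ and taking the coboundary of the tautological cochain $\sig\mapsto(1,\ovl{\bet}(\sig))$), and the unramified refinement is likewise obtained by using the class-group hypothesis to produce a global element (your $\eta$, the paper's $a$) whose class is added to $\theta$ via $\iota_*$ to kill the off-$S$ ramification.

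For part (2) you take a genuinely different route, and it is worth comparing. You propose to prove the full compatibility square identifying $\res_{D_\alp}\circ C_\alp$ with the map induced on $H^1(k,-)$ by the Picard-theoretic residue $\Aff(\ovl{Z}_{\alp},\mu_2)\hrar H^0(\ovl{D}_\alp,\QQ/\ZZ)$ of Lemma~\ref{l:W}, and then chase $\theta|_{k_{\alp,\bet}}=\iota_*u_{\theta}$ through it; the square is true and the chase from it is correct, but as you yourself note, verifying that square (matching the Gysin/purity residue with the Hochschild--Serre edge maps on both $W_\alp$ and $D_\alp$) is the entire content of the statement, and you leave it as an assertion. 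The paper is structured precisely to avoid this verification: it observes (i) that $r'_{\theta}$ dies after restriction to $k_{\theta}$, hence is either trivial or the pullback of $u_{\theta}$; (ii) that replacing $\theta$ by $\theta\cdot\iota_*\vphi$ changes $u_{\theta}$ by $\vphi|_{k_{\alp,\bet}}$ and changes $r'_{\theta}$ by the image of $\vphi$ --- the latter requiring only the special case of your square for classes $\iota_*\vphi$, where $C_\alp(\iota_*\vphi)=\vphi\cup[p_\alp]$ is an explicit cup product whose residue along the branch divisor is visibly $\vphi$; and (iii) that it therefore suffices to check a single lift $\theta$, chosen so that both $u_{\theta}$ and $r'_{\theta}$ are nonzero, where (i) forces them to agree. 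Your approach buys a cleaner, more conceptual statement at the cost of a purity/spectral-sequence compatibility that requires genuine work (including sign conventions); the paper's trick reduces the needed compatibility to the trivial cup-product case. If you keep your route, that commutative square is the one thing you must actually prove rather than invoke.
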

\begin{proof}
We begin by proving (1). For the elements $\alp \in H^1(k,A[2])$ and $\bet \in H^1(k,\hat{A}[2])$ we have corresponding homomorphisms $\ovl{\alp}: \Gal(k_\alp/k) \lrar A[2]$ and $\ovl{\bet}: \Gal(k_\bet/k) \lrar \hat{A}[2]$ as in Construction~\ref{c:galois}. By abuse of notation we will also denote by $\ovl{\alp}$ the composition of $\ovl{\alp}$ with the canonical projection $\Gam_k \lrar \Gal(k_\alp/k)$, and similarly for $\ovl{\bet}$. Consider the exact sequence 
\begin{equation}\label{e:aff}
H^1(k,\mu_2) \lrar H^1(k,\Aff(\ovl{Z}_{\alp},\mu_2)) \x{(h_\alp)_*}{\lrar} H^1(k,\hat{A}[2]) \x{\partial}{\lrar} H^2(k,\mu_2)
\end{equation}
associated to the short exact sequence~\eqref{e:sko-2}. We note that by choosing a base point $x_0 \in Z_\alp(\ovl{k})$ we may identify $Z_\alp(\ovl{k}) \cong A[2]$ and consequently identify each affine-linear map $L: Z_\alp(\ovl{k}) \lrar \mu_2$ with a map $A[2] \lrar \mu_2$ of the form $P \mapsto \eps \cdot \left<Q,P\right>$ for some $Q \in \hat{A}[2]$ and $\eps \in \mu_2$. The association $L \mapsto (\eps,Q)$ then identifies the underlying abelian group of $\Aff(\ovl{Z}_{\alp},\mu_2)$ with the abelian group $\mu_2 \oplus \hat{A}[2]$ and identifies the corresponding Galois action as $\sig(\eps,Q) = (\eps \cdot \left<\ovl{\alp}(\sig),Q\right>,Q)$. Now let $\ovl{\bet}': \Gam \lrar \Aff(\ovl{Z}_{\alp},\mu_2) \cong \mu_2 \times A[2]$ be the $1$-cochain $\ovl{\bet}'(\sig) = (1,\ovl{\bet}(\sig))$. Then
$$ \ovl{\bet}'(\sig) + \sig\ovl{\bet}'(\tau) - \ovl{\bet}'(\sig\tau) = \left(\left<\ovl{\alp}(\sig),\ovl{\bet}(\tau)\right>_{\lam},0\right) $$
and so
$$ \partial\bet = \alp \cup \bet \in H^2(k,\mu_2) .$$
It then follows that $\bet$ lifts to $H^1(k,\Aff(\ovl{Z}_{\alp},\mu_2))$ if and only if $\alp \cup \bet$ vanishes.

Now let $S$ be a set of places which contains a set of generators for the class group of $k$ and such that $\alp,\bet$ are unramified outside $S$. By~\eqref{e:split-2} we have that $k_{\theta}$ is an at most quadratic extension of $k_{\alp,\bet}$ which is classified by an element $u_{\theta} \in H^1(k_{\alp,\bet},\ZZ/2)$. Furthermore, if we replace $\theta$ by $\theta' = \theta \cdot \iota_*\vphi$ then we get $u_{\theta'} = u_{\theta} + \vphi|_{k_{\alp,\bet}}$: this follows from the obvious formula
$$ \ovl{\theta}|_{k_{\alp}}' = \ovl{\theta}|_{k_{\alp}} \cdot (\iota \circ \ovl{\vphi}|_{k_{\alp}}): \Gal(k_{\theta}/k_{\alp}) \lrar \Aff(\ovl{Z}_{\alp},\mu_2) $$
relating the homomorphisms associated to the classes $\theta|_{k_{\alp}}\in H^1(k_\alp,\Aff(\ovl{Z}_{\alp},\mu_2))$, $\theta'|_{k_{\alp}}\in H^1(k_\alp,\Aff(\ovl{Z}_{\alp},\mu_2))$ and $\vphi|_{k_\alp} \in H^1(k_\alp,\mu_2)$ by Construction~\ref{c:galois}. Now for every $v \notin S$, since $k_{\theta}$ is Galois over $k$ we have that the ramification index of $k_{\theta}/k_{\alp,\bet}$ is the same for all places $u$ of $k_{\alp,\bet}$ which lie above $v$. Let $T$ denote the set of places $v$ of $k$ such that $k_{\theta}/k_{\alp,\bet}$ is ramified at all places $u$ of $k_{\alp,\bet}$ which lie above $v$. Since $S$ contains a set of generators for the class group we can find an $a \in k^*$ such that for every $v \notin S$ we have that $\val_v(a)$ is odd if and only if $v \in T$. If we now set $\theta' = \theta \cdot \iota_*([a])$ then we get that $k_{\theta}$ is unramified outside $S$, as desired.

Let us now prove (2). Let $C \in \Br(W_\alp)$ be a Brauer element whose image in $\Br(W_\alp)/\Br(k)$ is $C_\alp(\theta)$, and let $r_{\theta} = \res_{D_\alp}(C) \in H^1(D_\alp,\QQ/\ZZ)$. Since $C_\alp(\theta)$ is a $2$-torsion element it follows that $2C$ is a constant class and hence $r_{\theta}$ is a $2$-torsion element. We may hence (uniquely) consider $r_{\theta}$ as an element of $H^1(D_\alp,\ZZ/2)$. 
Let $r_{\theta}' = (r_{\theta})|_{D_\alp \otimes_k k_{\alp,\bet}} \in H^1(D_\alp \otimes_k k_{\alp,\bet},\ZZ/2)$ denote the restriction of $r_{\theta}$ and recall that we have denoted by $u_{\theta} \in H^1(k_{\alp,\bet},\ZZ/2)$ the class corresponding to the at most quadratic extension $k_{\theta}/k_{\alp,\bet}$. Since $\theta$ vanishes in $H^1(k_{\theta},\Aff(\ovl{Z}_{\alp},\mu_2))$ it follows that the image of $C$ in $\Br(W_\alp \otimes_k k_{\theta})$ is constant and hence $r_{\theta}'$ vanishes in $H^1(D_\alp \otimes_k k_{\theta},\ZZ/2)$. It then follows that $r_{\theta}'$ is either trivial or is the pullback of $u_{\theta}$. To show that $r_{\theta}'$ can only be trivial if $u_{\theta}$ is trivial we use the fact that both $r_{\theta}'$ and $u_{\theta}$ depend on the choice of $\theta$ in the same way. More precisely, if we replace $\theta$ by $\theta' = \theta \cdot \iota_*\vphi$ for some $\vphi \in H^1(k,\mu_2)$ then $\theta'$ still maps to $\bet \in H^1(k,\hat{A}[2])$, and both $r'_{\theta'} - r'_{\theta}$ and $u_{\theta'} - u_{\theta}$ are equal to the corresponding images of $\vphi$. For $u_{\theta}$ this was shown above.
As for $r'_{\theta}$, this follows from the fact that the Brauer element $C_{\alp}(\iota_*\vphi)$ can be identified with the image of the cup product $\vphi \cup [p_\alp] \in H^2(W_\alp,\mu_2)$, and hence the residue of $C_{\alp}(\iota_*\vphi)$ along $D_\alp$ is the image of $\vphi$. It will now suffice to show that $r_{\theta}' = u_{\theta}$ for just a single $\theta$ which lifts $\bet$. As such we may choose $\theta$ so that both $u_{\theta}$ and $r_{\theta}'$ are non-zero, in which case they must coincide by the above considerations (i.e., since $r_{\theta}'$ vanishes in $H^1(D_\alp \otimes_k k_{\theta},\ZZ/2)$). 

\end{proof}

We finish this section with the following lemma, which gives some information on the way the Brauer elements $C_\alp(\theta)$ pair with local points in certain circumstances. For clarity of exposition we remark that the main purpose of Lemma~\ref{l:const} is to handle places which are part of a fixed $2$-structure for $A$, although this information is not needed for the formulation or proof of this lemma.
\begin{lem}\label{l:const}
Let $v$ be a finite odd place of $k$, let $A$ be an abelian variety over $k_v$ such that the Galois module $A[2]$ is unramified, and let $\alp \in H^1(k_v,A[2])$ be an unramified element (so that, in particular, $Y_\alp(k_v) \neq \emptyset$, and hence $X_\alp(k_v) \neq \emptyset$). Let $R \subseteq W_\alp(k_v)$ be the subset consisting of those points $x \in W_\alp(k_v)$ which lift to $U^F_\alp(k_v)$ for some \textbf{unramified} quadratic extension $F/k_v$. Let $\theta \in H^1(k_v,\Aff(\ovl{Z}_{\alp},\mu_2))$ be an unramified element and $C \in \Br(W_\alp)[2]$ be a $2$-torsion Brauer element whose image in $\Br(W_\alp)/\Br(k)$ is $C_\alp(\theta)$. Then the evaluation map $\ev_C: R \lrar \ZZ/2$ restricted to $R$ is constant.
\end{lem}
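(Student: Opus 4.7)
The strategy is to decompose $\theta$ using the short exact sequence~\eqref{e:sko-2} and treat each summand separately. Setting $\bet := (h_\alp)_*(\theta)$, both $\alp$ and $\bet$ are unramified at $v$, so the obstruction $\alp \cup \bet \in H^2(k_v,\mu_2) = \Br(k_v)[2]$ (from Proposition~\ref{p:descent}(1)) is an unramified element of $\Br(k_v)$, hence vanishes since $v$ is a finite odd place. By a standard inflation--restriction argument the lift can be chosen \emph{unramified}: pick an unramified $\theta_1 \in H^1(k_v,\Aff(\ovl{Z}_{\alp},\mu_2))$ with $(h_\alp)_*(\theta_1) = \bet$, and write $\theta = \theta_1 + \iota_*\vphi$ for a necessarily unramified $\vphi \in H^1(k_v,\mu_2)$.

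For the contribution of $\iota_*\vphi$, the identification made in the proof of Proposition~\ref{p:descent}(2) shows that $C_{\alp}(\iota_*\vphi)$ is represented by the cup product $\vphi \cup [p_\alp] \in H^2(W_\alp,\mu_2)$. For $x \in R$, the pullback of $[p_\alp]$ along $x:\Spec(k_v)\to W_\alp$ is the class $[F_x]\in H^1(k_v,\mu_2)$ of the $\mu_2$-torsor $p_\alp^{-1}(x)$, and by definition of $R$ this class is unramified. Since $v$ is odd, the cup product $\vphi\cup [F_x]$ of two unramified classes is unramified, hence zero in $\Br(k_v)[2]$.

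For the contribution of $\theta_1$, I would exploit the twisted analogue of the commutative square relating $C_{\alp^F}$ and $B_{\alp^F}$ for an unramified quadratic extension $F/k_v$ (recall $W_{\alp^F} = W_\alp$). A point $x \in R$ admits a lift $\tilde x \in U_\alp^F(k_v)\subseteq Y_\alp^F(k_v)$ with $F/k_v$ unramified, so
\[
\ev_{C_\alp(\theta_1)}(x) = \ev_{(p_\alp^F)^*C_\alp(\theta_1)}(\tilde x),
\]
and by naturality $(p_\alp^F)^*C_\alp(\theta_1) \equiv B_{\alp^F}(\bet)$ modulo $\Br(k_v)$. Now $\alp^F$ is the twist of an unramified class by an unramified quadratic extension and is therefore itself unramified; together with unramifiedness of $\bet$ this places $B_{\alp^F}(\bet)$ in the unramified part of $\Br(Y_\alp^F)/\Br(k_v)$ at $v$. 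Evaluating an unramified Brauer class modulo $\Br(k_v)$ at the $k_v$-point $\tilde x$ of the (proper) $2$-covering produces a class in $\Br(\OO_v)=0$, modulo an ambient constant $c_F \in \Br(k_v)$ reflecting the choice of integral representative.

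The remaining point, and the main technical obstacle, is to verify that the constant $c_F$ is the same for both choices of unramified $F$ (trivial and non-trivial), so that $\ev_{C_\alp(\theta_1)}$ takes a single value on all of $R$. This should follow from the fact that the difference between two unramified twists is itself an unramified $\mu_2$-torsor on $W_\alp$, and the resulting discrepancy in integral representatives is a cup product of unramified classes over $k_v$, which vanishes in $\Br(k_v)[2]$ for odd $v$. Subtleties to watch out for here are that $A$ (and hence $Y_\alp^F$) is not assumed to have good reduction, only $A[2]$ unramified, so the ``integral model'' arguments must be made using only the unramified structure on the $2$-covering $Y_\alp^F$ and on $Z_\alp^F$, which does exist since $\alp$ (and thus $\alp^F$) is unramified at the odd place $v$.
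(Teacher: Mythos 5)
Your treatment of the constant part is fine: decomposing $\theta=\theta_1+\iota_*\vphi$ with $\theta_1$ an unramified lift of $\bet=(h_\alp)_*(\theta)$ is legitimate (over $k_v^{\un}$ the map $\iota_*$ is injective, so $\vphi$ is indeed unramified), and the identification $C_\alp(\iota_*\vphi)=\vphi\cup[p_\alp]$ together with the vanishing of cup products of unramified classes at an odd place disposes of that summand. The gap is in the $\theta_1$ part. You reduce $\ev_C(x)$ for $x\in R$ to the evaluation of a representative of $B^F_\alp(\bet)$ at a lift $\tilde x\in Y^F_\alp(k_v)$ and then appeal to ``unramified class, hence lands in $\Br(\OO_v)=0$''. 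That step presupposes a smooth proper $\OO_v$-model of $Y^F_\alp$ to which the class extends; but the lemma only assumes $A[2]$ unramified, and in the intended application the place $v$ is a place of \emph{multiplicative} reduction, so no such model exists. Worse, the statement you would need is false in general: for $B$ representing $B^F_\alp(\bet)$, the difference $\ev_B(\tilde x)-\ev_B(\tilde y)$ is computed by the local Tate duality pairing of $\tilde x-\tilde y\in A^F(k_v)$ against the image of $\bet$ in $H^1(k_v,\hat{A})$, so $\ev_B$ is constant on $Y^F_\alp(k_v)$ only when $\bet$ lies in the local Selmer subgroup $W^F_v$ — and at a multiplicative place an unramified $\bet$ need not lie there (cf.\ Lemma~\ref{l:mult}: the unramified classes in $W_w$ form a proper subspace of the unramified classes). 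Hence no argument of the form ``the class is unramified, so it evaluates constantly on $k_v$-points of the torsor'' can work; the constancy must exploit which points lie in $R$. The same problem infects your final step of matching the constants $c_F$ for the two unramified twists, which is only asserted (``should follow from'') and is not a cup product of unramified classes in any evident way.

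The paper's proof stays on the Kummer side precisely so that membership in $R$ can do the work. It builds a regular (non-proper) $\OO_v$-model $\cX_\alp$ out of the Néron model of $A$, notes that the closure of a point $x\in R$ meets the exceptional divisor $\cD_\alp$ with even multiplicity at every intersection point (because $p^F_\alp$ is ramified along $D_\alp$), so the residues of the order-$2$ class $C$ along $\cD_\alp$ contribute nothing to the residue of $x^*C$, and observes that the residue of $C$ along the special fiber is a constant class in $H^1(\FF_v,\QQ/\ZZ)$ exactly because $\theta$ is unramified. Some version of this integral-model residue computation — using the Néron model rather than a nonexistent smooth proper one, and using the even-multiplicity property of points of $R$ rather than unramifiedness of $B^F_\alp(\bet)$ — is what your argument for $\theta_1$ is missing.
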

\begin{proof}
Let $\cA \lrar \spec(\OO_v)$ be a Néron model for $A$ and let $\cA[2] \subseteq \cA$ be the Zariski closure of $A[2]$. Then $\cA[2]$ is smooth subscheme of $\cA$ and the blow-up $\wtl{\cA}$ of $\cA$ along $\cA[2]$ is a regular $\cO_v$-model for $\wtl{A}$. The antipodal incolution $\iota_A: A \lrar A$ extends to an involution $\iota_{\cA}: \cA \lrar \cA$ and consequently to an involution $\iota_{\wtl{\cA}}: \wtl{\cA} \lrar \wtl{\cA}$. Since $\cA$ is quasi-projective (see~\cite{BLR}) so is $\wtl{\cA}$, and hence the geometric quotient $\cX$ of $\wtl{\cA}$ by $\iota_{\wtl{\cA}}$ exists as a scheme. Furthermore, since we assume that $v$ is odd the corresponding action of $\ZZ/2$ is tame and hence $\cX$ is also a \textbf{universal} geometric quotient (see~\cite{CEPT96}). This means, in particular, that the special fiber of $\cX$ is the geometric quotient of the special fiber of $\wtl{\cA}$ by the associated involution. A local calculation then shows that $\wtl{\cA}/\iota_{\wtl{\cA}}$ is in fact a smooth scheme, yielding a regular $\OO_v$-model for the Kummer surface $X = \Kum(A)$. Similarly, since $\alp \in H^1(k_v,A[2])$ is an unramified class we may naturally consider $\alp$ as an element of $H^1(\OO_v,\cA[2])$ and consequently twist $\cA$ by $\alp$. This results in a regular $\OO_v$-model $\cY_\alp$ for $Y_\alp$, whose special fiber $(\cY_\alp)_{\FF_v}$ is a torsor under the special fiber $\cA_{\FF_v}$ of $\cA$ associated to the reduction $\ovl{\alp} \in H^1(\FF_v,\cA[2]_{\FF_v})$. The induced antipodal involution $\iota_{Y_\alp}: Y_{\alp} \lrar Y_{\alp}$ extends to an antipodal involution $\iota_{\cY_\alp}: \cY_\alp \lrar \cY_\alp$ whose fixed locus is the Zariski closure $\cZ_\alp \subseteq \cY_\alp$ of $Z_\alp \subseteq Y_\alp$. Blowing-up $\cY_\alp$ along $\cZ_\alp$ and taking the corresponding (universal geometric) quotient by the induced involution we obtain a regular $\cO_v$-model $\cX_\alp = \wtl{\cY}_{\alp}/\iota_{\wtl{Y}_{\alp}}$ for the Kummer surface $X_\alp = \Kum(Y_\alp)$. The exceptional locus $\cD_\alp \subseteq \wtl{\cY}_\alp$ is then a regular $\cO_v$-model for $D_\alp$, and by a mild abuse of notation we will identify it will its image in $\cX_\alp$. 

We note that the complement of $W_\alp$ in $\cW_\alp$ is the union of the divisor $\cD_\alp \subseteq \cW_\alp$ and the special fiber $(\cW_\alp )_{\FF_v} \subseteq \cW_\alp$. Let $x \in W_\alp(k_v)$ be a point which lifts to $U^F_\alp(k_v)$ for some unramified quadratic extension $F/k_v$, and let $\xmcal{x}$ be its Zariski closure in $\cW_\alp$. Since $p^F_\alp: U^F_\alp \lrar W_\alp$ is ramified along $D_\alp$ it follows that each intersection point of $\xmcal{x}$ and $\cD_\alp$ must have even multiplicity. Now the Brauer element $C$ has order $2$ and is (possibly) ramified only over the complement $\cW_\alp \setminus W_\alp = \cD_\alp \cup (\cW_\alp)_{\FF_v}$. Since each intersection point of $\xmcal{x}$ and $\cD_\alp$ has even multiplicity the residue of $\xmcal{x}^{*}C \in \Br(\spec(k_v))$ along $\spec(\FF_v)$ depends only on the residue of $C$ along $(\cW)_{\FF_v}$. Since $\theta \in H^1(k_v,\Aff(\ovl{Z}_{\alp},\mu_2))$ is assumed to be unramified we see that the element $C$ becomes constant in $\Br(W_\alp \otimes_{k_v} k^{\un}_v)$ and hence the residue of $C$ along $(\cW_\alp)_{\FF_v}$ vanishes in $(\cW_\alp)_{\FF_v} \otimes_{\FF_v} \ovl{\FF_v}$. It then follows that $\res_{(\cW_\alp)_{\FF_v}}(C)$ is constant, and so we may conclude that the evaluation map $\ev_C: R \lrar \ZZ/2$ is constant, as desired.
\end{proof}

\subsection{Quadratic twists and the Mazur-Rubin lemma}\label{s:MR}

Let $A$ be an abelian variety over $k$ equipped with a principal polarization $\lam: A \x{\cong}{\lrar} \hat{A}$ which is induced by a symmetric line bundle on $A$. Let $\alp \in H^1(k,A[2])$ be an element and let $Y_\alp$ be the associated $2$-covering of $A$. Then $Y_\alp$ carries an adelic point if and only the image $[Y_\alp] \in H^1(k,A)$ of $\alp$ lies in $\Sha(A)$. Recall that the \textbf{Selmer} group $\Sel_2(A) \subseteq H^1(k,A[2])$ is defined as the preimage of $\Sha(A) \subseteq H^1(k,A)$ under the natural map $H^1(k,A[2]) \lrar H^1(k,A)$. We then have a short exact sequence
$$ 0 \lrar A(k)/2A(k) \lrar \Sel_2(A) \lrar \Sha(A)[2] \lrar 0 .$$
Given a quadratic extension $F/k$ we may canonically identify $H^1(k,A[2])$ with $H^1(k,A^F[2])$, and consequently consider the Selmer groups $\Sel_2(A^F)$ for all $F/k$ as subgroup of the same group $H^1(k,A[2])$. In order to use Swinnerton-Dyer's method in the proof of the main theorem, we will need to know how the Selmer group changes when one makes sufficiently simple quadratic twists. For this purpose we will use an approach developed by Mazur and Rubin for analyzing the behavior of Selmer groups in families of quadratic twists (see~\cite[\S 3]{MR10}).

For a place $v$ of $k$ and a quadratic extension $F/k$, let $W^F_v \subseteq H^1(k_v,A[2])$ be the kernel of the map $H^1(k,A[2]) = H^1(k,A^F[2]) \lrar H^1(k,A^F)$. The Selmer group $\Sel_2(A^F) \subseteq H^1(k,A[2])$ is then determined by the condition that $\loc_v(x) \in W^F_v$ for every place $v$. When $F$ is the trivial quadratic extension we will denote $W^F_v$ simply by $W_v$. The intersection $U_v = W_v \cap W^F_v$ is then a measure of the difference between the Selmer conditions before and after a quadratic twist by $F$. It is also useful to encode this information via the corresponding quotients $\ovl{W}_v = W_v/U_v$ and $\ovl{W}_v^F = W_v^F/U_v$. Given a finite set of places $T$ we will denote by $\ovl{W}^F_T = \oplus_{v \in T} \ovl{W}^F_v$ and by $V^F_T \subseteq \ovl{W}^F_T$ the image of $\Sel_2(A^F)$. As above, when $F$ is the trivial extension we will simply drop the supscript $F$ from the notation, yielding $\ovl{W}_T$ and $V_T$.

For each place $v$ of $k$, the Weil pairing~\eqref{e:weil} induces a local alternating pairing
\begin{equation}\label{e:local} 
\cup_{v}: H^1(k_v,A[2]) \times H^1(k_v,A[2]) \lrar H^2(k_v,\mu_2) \x{\inv}{\cong} \ZZ/2 
\end{equation}
Local arithmetic duality for abelian varieties asserts that the pairing~\eqref{e:local} is non-degenerate and admits $W_v \subseteq H^1(k_v,A[2])$ as a maximal isotropic subspace. In particular, $\dim_2 W_v = \dim_2 W_v^F$. 

\begin{rem}\label{r:alternating-2}
In general $\cup_{v}$ not alternating. Instead, one has the formula
$$ x \cup_{v} x = x \cup_{v} c_\lam, $$
where $c_\lambda \in H^1(k_v,A[2])$ is the local image of the obstruction element for realizing $\lam$ as the polarisation arising from a symmetric line bundle defined over $k$ (see~\cite[Theorem 3.4]{PR}). As we assumed that $\lam$ comes from a symmetric line bundle in our case the element $c_\lam$ vanishes and $\cup_{v}$ is alternating.
\end{rem}

\begin{lem}\label{l:good-red}
Let $v$ be a place of good reduction for $A$ and let $F$ be a quadratic extension which is ramified at $v$. Then $U_v = 0$.
\end{lem}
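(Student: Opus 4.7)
The plan is to prove the equivalent statement $W_v + W_v^F = H^1(k_v, A[2])$. Both $W_v$ and $W_v^F$ are maximal isotropic subspaces of $H^1(k_v, A[2])$ under the non-degenerate pairing $\cup_v$, which is alternating by Remark~\ref{r:alternating-2}. Hence $(W_v \cap W_v^F)^{\perp} = W_v^{\perp} + (W_v^F)^{\perp} = W_v + W_v^F$, and the vanishing $U_v = 0$ is equivalent to $W_v + W_v^F = H^1(k_v, A[2])$.

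Assuming $v$ is odd (as in all applications of interest), good reduction implies $W_v = H^1_{\mathrm{nr}}(k_v, A[2])$ and that $A[2]$ is an unramified Galois module. First I would observe that $\res \colon H^1(k_v, A[2]) \to H^1(F, A[2])$ has image contained in $W_{F,w} = H^1_{\mathrm{nr}}(F, A[2])$: since tame inertia $I_v^{\mathrm{tame}}$ is procyclic and acts trivially on $A[2]$, a cocycle restricted to $I_v^{\mathrm{tame}}$ is a homomorphism determined by the image of a topological generator, and restricting to the index-$2$ subgroup $I_F^{\mathrm{tame}}$ (using that $F/k_v$ is tamely and totally ramified) doubles this value, hence kills it. The local Euler characteristic formula then yields $\im(\res) = W_{F,w}$ by dimension count.

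Next I would show $\res(W_v + W_v^F) = W_{F,w}$: using the identification $A \cong A^F$ over $F$, one has $A^F(k_v) = \{Q \in A(F) : \sigma Q = -Q\}$ with $\sigma$ generating $\Gal(F/k_v)$, and the elementary decomposition $2P = (1+\sigma)P + (1-\sigma)P$ gives $2A(F) \subseteq A(k_v) + A^F(k_v)$, so $A(k_v) + A^F(k_v)$ surjects onto $A(F)/2A(F)$ and hence, via Kummer, onto $W_{F,w}$. Finally I would show $\ker(\res) \subseteq W_v + W_v^F$. Inflation--restriction identifies $\ker(\res) \cong H^1(\Gal(F/k_v), A[2]^{G_{k_v}}) \cong A[2]^{G_{k_v}}$ (noting $A[2]^{G_F} = A[2]^{G_{k_v}}$ since $F/k_v$ is totally ramified and $A[2]$ unramified), whose elements are the cup products $\chi_F \cup T$ for $T \in A[2]^{G_{k_v}}$, with $\chi_F$ the character of $F/k_v$. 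A direct cocycle calculation establishes
\[
\partial^F(T) - \partial_A(T) = \chi_F \cup T,
\]
valid for $T \in A[2]^{G_{k_v}} \subseteq A(k_v) = A^F(k_v)$, exhibiting every element of $\ker(\res)$ as a member of $W_v + W_v^F$.

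Combining these three observations, $W_v + W_v^F$ surjects onto $\im(\res) = W_{F,w}$ and contains $\ker(\res)$, yielding $W_v + W_v^F = H^1(k_v, A[2])$ and hence $U_v = 0$. The main subtlety will be the cocycle identity relating $\partial^F$, $\partial_A$, and $\chi_F \cup T$, which requires careful tracking of the twisted Galois action on $A^F$; the remaining steps are essentially dimension chases once the tame inertia picture and maximal isotropy have been set up.
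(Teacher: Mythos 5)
Your argument is correct, but it follows a genuinely different route from the one the paper invokes: the paper proves this lemma by citation to \cite{HS15}, where the argument goes through the norm map. Namely, by the identity $U_v = W_v\cap W_v^F=\del(\mN(A(F_w)))$ (Lemma 4.1 of \cite{HS15}, also quoted in the proof of Lemma~\ref{l:mult} here), one reduces to showing $\mN(A(F_w))\subseteq 2A(k_v)$; since $F_w/k_v$ is totally ramified, the reduction of $\mN(P)=P+\sig P$ is $2\wtl{P}$ in the special fibre, and the kernel of reduction is uniquely $2$-divisible at an odd place, so the norm group lands in $2A(k_v)$ and $U_v$ vanishes. You instead dualize, using the maximal isotropy of $W_v$ and $W_v^F$ recorded in \S\ref{s:MR} to convert the claim into $W_v+W_v^F=H^1(k_v,A[2])$, and you establish this by splitting along restriction to $F_w$: the tame-inertia computation controls the image, the decomposition $2P=(1+\sig)P+(1-\sig)P$ gives surjectivity of $W_v+W_v^F$ onto that image, and the twisting identity $\del_F(T)-\del(T)=\chi_F\cup T$ for rational $2$-torsion $T$ (the same cocycle computation as in Lemma~\ref{l:twist}) accounts for the kernel. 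Each step checks out, including the identification of $\Ker(\res)$ with the classes $\chi_F\cup T$ via inflation--restriction. Your approach is self-contained modulo the local duality facts already stated in the paper, at the price of several dimension counts that all require $v$ to be odd; this hypothesis is absent from the statement of the lemma but is present in the source and is satisfied in every application in the paper (the relevant places always lie outside a set $S$ containing the places above $2$), and you are right to flag it explicitly.
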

\begin{proof}
See~\cite[Lemma 4.3]{HS15}.
\end{proof}

\begin{lem}\label{l:mult}
Let $w$ be a place which belongs to the $2$-structure of $A$ and let $F$ be a quadratic extension such that $w$ is inert (and in particular unramified) in $F$. Then $\dim_2 \ovl{W}_w = \ovl{W}^F_w = 1$. Furthermore, the intersection $W_w \cap W_w^F$ contains exactly the elements of $W_w$ (or $W_w^F$) which are unramified.
\end{lem}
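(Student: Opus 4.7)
The plan is to establish two facts from which the lemma follows: first, that the unramified subgroup $W_w^{\un} := W_w \cap H^1(k_w^{\un}/k_w, A[2])$ has codimension $1$ in $W_w$; second, that $W_w \cap W_w^F = W_w^{\un}$. The analogous statements for $W_w^F$ then follow by symmetry, and together they give both claims of the lemma (the codimension claim since $\dim_2 \ovl{W}_w = \dim_2 W_w/(W_w \cap W_w^F)$, and the ``furthermore'' since the intersection of $W_w$ with $H^1(k_w^{\un}/k_w, A[2])$ is exactly the set of unramified classes in $W_w$).

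For the first fact, I work with the Néron model $\A$ of $A$ over $\OO_{k_w}$ and its identity component $\A^0$, using the filtration $\A^0(k_w) \subseteq A(k_w)$. Since $w$ is odd the formal group of $\A^0$ is $2$-divisible, and since $A$ has multiplicative reduction at $w$ the identity component of the special fiber has $\ovl{\FF}_w$-points which are divisible; together these make $\A^0(k_w^{\un})$ a $2$-divisible abelian group, so every Kummer class $\partial(P)$ with $P \in \A^0(k_w)$ is unramified. Conversely, given an unramified class $\alpha = \partial(P) \in W_w$ with $y \in A(k_w^{\un})$ satisfying $2y = P$, the $2$-structure hypothesis implies that the reduction map $A[2] \to C_w[2]$ is surjective; modifying $y$ by a suitable $2$-torsion element then brings it into $\A^0(k_w^{\un})$, and so $P$ must lie in $\A^0(k_w) + 2A(k_w)$. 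Consequently $W_w^{\un}$ is exactly the kernel of the map $W_w \twoheadrightarrow C_w/2C_w = \ZZ/2$, whose surjectivity follows by Hensel lifting together with Lang's theorem on $\FF_w$-rational components.

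For the second fact, the substantive direction is $W_w^{\un} \subseteq W_w^F$; the reverse is then forced by the dimension count discussed below. Running the analysis of the first fact in parallel for $A^F$, I identify both $W_w^{\un}$ and $W_w^F \cap H^1(k_w^{\un}/k_w, A[2])$ under the canonical isomorphism $H^1(k_w^{\un}/k_w, A[2]) \cong A[2]$ given by evaluation at Frobenius (which is valid because $A[2]$ has trivial Galois action) with the same subgroup $A[2] \cap \A^0(k_w^{\un}) \subseteq A[2]$. The crucial geometric input is that $F_w/k_w$ is unramified, so $F_w \subseteq k_w^{\un}$ and the Néron models of $A$ and $A^F$ become canonically isomorphic after base change to $\OO_{k_w^{\un}}$; combined with Lang-type surjectivity of $\Frob - 1$ for $A$ and $\Frob + 1$ for $A^F$ on $\A^0(k_w^{\un})$, this makes the two parameterizations of the unramified Selmer pieces agree as subgroups of $A[2]$.

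To close the dimension count I show $W_w \not\subseteq W_w^F$ by exhibiting an explicit element: the dual basis element $Q_w \in A[2]$ (which by construction reduces non-trivially to $C_w/2C_w$) yields $\partial(Q_w) \in W_w$ that does not lie in $W_w^F$. A short cocycle computation, in the spirit of the proof of Lemma~\ref{l:twist}, shows that the difference $d := \partial(Q_w) - \partial_F(Q_w) \in H^1(k_w, A[2])$ is represented by the cocycle sending $\sigma$ to $Q_w$ when the quadratic character of $F/k$ is non-trivial on $\sigma$ and to $0$ otherwise. So $d$ is unramified with Frobenius value $Q_w$, and since $Q_w \notin \A^0(k_w^{\un})$ by construction, the explicit description from the second fact forces $d \notin W_w^F$. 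If $\partial(Q_w)$ lay in $W_w^F$, then so would $d$, a contradiction. Since $W_w^{\un}$ is codimension $1$ in $W_w$ and $W_w^{\un} \subseteq W_w \cap W_w^F \subsetneq W_w$, we conclude $W_w \cap W_w^F = W_w^{\un}$ and $\dim_2 \ovl{W}_w = 1$. I expect the hardest step to be the identification in the second fact, where the matching of the two Lang surjectivities with the Frobenius-value identification must be carried out carefully, keeping close track of the twisted Galois action on $A^F$.
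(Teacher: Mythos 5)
Your proof is correct, but it takes a genuinely different route from the paper's. The paper's proof is short and leans on two external inputs: Lemma~4.1 of~\cite{HS15}, which identifies $W_w \cap W_w^F$ with $\partial(\mN(A(F_w)))$ for the local norm map $\mN$, and Mazur's computation of the norm index $A(k_w)/\mN(A(F_w)) \cong \ZZ/2$ from the structure of $C_w$ (cyclic with $2$-part of order $2$); the ``furthermore'' is then extracted by observing that norms land in components lying in $2C_w$ and that this necessary condition must also be sufficient for index reasons. You instead bypass the norm map entirely: you characterize the unramified part of each local condition subgroup directly via the Néron model and Lang--Steinberg surjectivity (of $\Frob-1$ for $A$ and of the twisted analogue for $A^F$), identify both unramified parts with $A[2]\cap\A^0(k_w^{\un})$ under Frobenius evaluation, and then rule out $W_w \subseteq W_w^F$ by the explicit cocycle computation showing $\partial(Q_w)-\partial_F(Q_w)$ is unramified with Frobenius value $Q_w \notin \A^0$. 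Both arguments ultimately rest on the same consequence of the $2$-structure hypothesis (the $2$-part of $C_w$ is $\ZZ/2$ and $A[2]$ surjects onto it), and both arrive at the same description of the intersection; yours is more self-contained and constructive, the paper's is shorter given the cited results. One small imprecision: in your ``first fact'' you cannot in general bring $y$ into $\A^0(k_w^{\un})$ by adding a $2$-torsion point, since the image of $y$ in the geometric component group may have a nontrivial odd-order part which no element of $A[2]$ can cancel. This does not affect your conclusion: the relation $2[y]=[P]$ already forces the image of $P$ in $C_w/2C_w$ to vanish (the $2$-part of the component group being $\ZZ/2$), which is all you use, and the odd part causes no trouble because the odd part of $C_w$ is $2$-divisible.
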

\begin{proof}
Since $F$ is unramified at $w$ the components groups $C_w$ and $C^F_w$ of $A$ and $A^F$ respectively are naturally isomorphic. To compute $W_w \cap W^F_w$ we use Lemma $4.1$ of~\cite{HS15} which asserts that
$$ W_w \cap W^F_w = \del(\mN(A(F_w))) $$
where $F_w = F \otimes_k k_w$ and $\mN: A(F_w) \lrar A(k_w)$ is the norm map. Since $w$ is part of a $2$-structure the group $C_w$ is cyclic and $|C_w| = 2$ mod $4$ (see Remark~\ref{r:odd}). Combining~\cite[Proposition $4.2$, Proposition $4.3$]{Ma}, and using the fact that $A$ is isomorphic to its dual by the principal polarization $\lam$, we may deduce that 
$$ A(k_w)/\mN(A(F_w)) \cong \ZZ/2 .$$
On the other hand, since $2A(k_w) \subseteq \mN(A(k_w))$ the map $\del$ induces an isomorphism
$$ A(k_w)/\mN(A(F_w)) \cong \del(A(k_w))/\del(\mN(A(F_w))) \cong W_w/(W_w \cap W^F_w) $$
and so the latter group is isomorphic to $\ZZ/2$, as desired. Finally, let us note that since $F/k_w$ is unramified the base change $A_F$ also has a multiplicative reduction at $w$ with component group $C^F_w \cong C_w$. In particular $C^F_w/2C^F_w \cong \ZZ/2$ has trivial Galois action and so every point in $\mN(A(F_w))$ reduces to a component in $2C_w$. Since $A(k_w)/\mN(A(F_w)) \cong \ZZ/2$ it follows that this condition is sufficient as well, i.e., the points of $A(k_w)$ which are norm from $A(F_w)$ are exactly those whose image in $C_w/2C_w$ is trivial. On the other hand, by Hensel's lemma these are also exactly the points which are divisible by $2$ in $A(k^{\un}_w)$, and hence exactly the points $x \in A(k_w)$ such that $\del(x)$ is unramified.
\end{proof}

\begin{rem}\label{r:semi-direct}
Let $w$ be a place which belongs to the $2$-structure $M$ of $A$. Combining Lemma~\ref{l:mult} and Corollary~\ref{c:p_w-3} we may conclude that the Selmer condition subspace $W_w \subseteq H^1(k_w,A[2])$ is generated over $W_w \cap H^1(\cO_w,A[2])$ by the element $\partial(Q_w)$. This implies that every element of $\Sel_2(A)$ can be written uniquely as a sum of an element unramified over $M$ and an element in the image of $A[2]$.
\end{rem}

Now let $T$ be such that $W_v = W^F_v$ for every $v \notin T$. Then the kernel of the surjective map $\Sel_2(A) \lrar V_T$ can be identified with the kernel of the surjective map $\Sel_2(A^F) \lrar V^F_T$, and hence
$$ \dim_2(\Sel(A^F)) - \dim_2(\Sel(A)) = \dim_2(V_T^F) - \dim_2(V_T) .$$
The following lemma, which is based on the ideas of Mazur and Rubin for analyzing the behavior of Selmer groups in families of quadratic twists (see~\cite[\S 3]{MR10}), is our key tool for controlling the difference $\dim_2(\Sel(A^F)) - \dim_2(\Sel(A))$ after quadratic twists.
\begin{lem}[Mazur-Rubin]\label{l:mazur-formula}
Let $A$ be as above. Let $F/k$ be a quadratic extension and let $T$ be a finite set of odd places of $k$ such that $W_v = W^F_v$ for every $v \notin T$. Let $r = \dim_2\ovl{W}_T = \dim_2\ovl{W}^F_T$. Then
$$ \dim_2V_T + \dim_2V_T^F \leq r $$
and the gap $r - \dim_2V_T - \dim_2V_T^F$ is even.
\end{lem}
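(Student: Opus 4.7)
The plan is to reduce the statement to symplectic linear algebra via local Tate duality and global reciprocity. By Remark~\ref{r:alternating-2}, each local pairing $\cup_v$ on $H^1(k_v, A[2])$ is non-degenerate and alternating, with $W_v$ and $W_v^F$ both maximal isotropic. Since $W_v^\perp = W_v$ and $W_v^{F\perp} = W_v^F$, one has $(W_v + W_v^F)^\perp = W_v \cap W_v^F = U_v$, so $\cup_v$ descends to a non-degenerate alternating pairing on $(W_v + W_v^F)/U_v$ in which $\ovl{W}_v$ and $\ovl{W}_v^F$ appear as complementary Lagrangians. Restriction then yields a perfect duality $\phi_v: \ovl{W}_v \times \ovl{W}_v^F \to \ZZ/2$, which in particular forces $\dim_2 \ovl{W}_v = \dim_2 \ovl{W}_v^F$, so that $r$ is unambiguously defined. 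Summing over $v \in T$ produces a symplectic space $R_T = \ovl{W}_T \oplus \ovl{W}_T^F$ of dimension $2r$ with $\ovl{W}_T$ and $\ovl{W}_T^F$ as complementary Lagrangians, equipped with the perfect duality $\phi = \sum_v \phi_v$.

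For the inequality, I would invoke global reciprocity for the Weil pairing: for $\xi \in \Sel_2(A)$ and $\eta \in \Sel_2(A^F)$ the sum $\sum_v \loc_v(\xi) \cup_v \loc_v(\eta)$ vanishes in $\ZZ/2$, and the contributions from $v \notin T$ vanish individually because $\loc_v(\xi), \loc_v(\eta) \in W_v = W_v^F$ which is isotropic. Hence $V_T$ and $V_T^F$ are $\phi$-orthogonal in $R_T$, forcing $\dim_2 V_T + \dim_2 V_T^F \le r$.

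For the parity, I would introduce the enlarged Selmer group $\Sel^*$ defined by the local conditions $W_v + W_v^F$ at $v \in T$ and $W_v$ at $v \notin T$. Global reciprocity shows its image $V$ in $R_T$ is isotropic, and the Poitou--Tate dimension formula, applied to $\Sel^*$ paired against the dual Selmer structure given by $U_v$ at $v \in T$ (and $W_v$ elsewhere), pins down $\dim V = r$, so $V$ is a Lagrangian subspace of $R_T$. Since $V \supseteq V_T \oplus V_T^F$ and $\ovl{W}_T, \ovl{W}_T^F$ are transverse Lagrangians meeting $V$ in $V_T$ and $V_T^F$ respectively, the gap $r - \dim V_T - \dim V_T^F$ equals $\dim V/(V_T \oplus V_T^F)$. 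The duality $\phi$ then descends to a non-degenerate bilinear form on this quotient; the main obstacle will be showing this induced form is \emph{alternating} rather than merely symmetric (which over $\FF_2$ would not suffice for a parity conclusion). This relies on the alternating character of the original $\cup_v$, i.e.\ crucially on the hypothesis that $\lambda$ comes from a symmetric line bundle, interacting with the Lagrangian structure of $V$ inside $R_T$; granted this, $V/(V_T \oplus V_T^F)$ carries a non-degenerate alternating form and hence has even dimension, which is the desired parity statement.
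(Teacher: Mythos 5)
Your treatment of the inequality is exactly the paper's: descend the local Tate pairing to a perfect duality $\ovl{W}_v\times\ovl{W}^F_v\to\ZZ/2$ using that $W_v$ and $W^F_v$ are maximal isotropic, then use global reciprocity to make $V_T$ and $V_T^F$ orthogonal. That half is correct.

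The parity argument has a genuine gap, and you have put your finger on it without closing it. A preliminary imprecision: $\phi$ itself cannot ``descend to a non-degenerate form on $V/(V_T\oplus V_T^F)$'', since $V$ is Lagrangian and so $\phi|_{V\times V}=0$; what you need is the symmetric form $B(v_1,v_2)=\phi(\pi\,v_1,\pi'v_2)$ attached to the splitting $R_T=\ovl{W}_T\oplus\ovl{W}^F_T$ (with $\pi,\pi'$ the two projections), whose radical on $V$ is indeed $V_T\oplus V_T^F$. The real issue is the one you flag: $B$ being alternating is equivalent to $V$ being isotropic for the quadratic refinement $q(x)=\phi(\pi x,\pi'x)$ of $\phi$, and this does \emph{not} follow from ``$\cup_v$ is alternating plus the Lagrangian structure of $V$''. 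In $\FF_2^2$ with the standard symplectic form, the three lines spanned by $e_1$, $e_2$ and $e_1+e_2$ are pairwise transverse Lagrangians, yet $q(e_1+e_2)=1$ for the refinement determined by the first two, and correspondingly the parity identity you are after fails for this triple. So some arithmetic input beyond local duality and reciprocity is indispensable: namely the Poonen--Rains quadratic form $q_T$ on $\oplus_{v\in T}H^1(k_v,A[2])$ and the theorem that the image of the global classes, as well as $\oplus_vW_v$ and $\oplus_vW^F_v$, are isotropic for $q_T$ itself and not merely for the associated bilinear form (\cite[Theorem 4.13]{PR12}). This is precisely what the paper's proof uses, via the identity $\dim_2(U\cap U')+\dim_2(U'\cap U'')+\dim_2(U''\cap U)\equiv\dim_2 U\ (\mathrm{mod}\ 2)$ for three maximal isotropics of a metabolic quadratic space, applied to $H^1(\cO_T,A[2])$, $\oplus_vW_v$ and $\oplus_vW^F_v$. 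Your packaging through the enlarged Selmer structure $W_v+W^F_v$ and the Greenberg--Wiles formula (which correctly gives $\dim_2V=r$, so $V$ is Lagrangian) is a clean equivalent reformulation, but until the isotropy of the global image for the \emph{quadratic} form is imported, the parity statement is not proved.
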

\begin{proof}
Let
\begin{equation}\label{e:local-2}
W_v \times W^F_v \lrar \ZZ/2
\end{equation}
be the restriction of the local Tate pairing~\ref{e:local}. Since $W_v$ and $W^F_v$ are both maximal isotropic with respect to~\ref{e:local} it follows that the left and right kernels of~\ref{e:local-2} can both be identified with $W_v \cap W^F_v$, and so~\ref{e:local-2} descends to a non-degenerate pairing
\begin{equation}\label{e:local-3}
\ovl{W}_v \times \ovl{W}^F_v \lrar \ZZ/2
\end{equation}
By summing over the places of $T$ we obtain a non-degenerate alternating form
\begin{equation}\label{e:local-4}
\ovl{W}_T \times \ovl{W}^F_T \lrar \ZZ/2
\end{equation}
between two vector spaces of dimension $r$. Finally, by quadratic reciprocity and the fact that $W_v = W^F_v$ for $v \notin T$ we get that the subspaces $V_T \subseteq \ovl{W}_T$ and $V^F_T \subseteq \ovl{W}^F_T$ are orthogonal to each other with respect to~\eqref{e:local-4} (although not necessarily maximally orthogonal) 
and so we obtain the bound
$$ \dim_2(V_T) + \dim_2(V_T^F) \leq r .$$

Let us now show that that the gap between $\dim_2(V_T) + \dim_2(V_T^F)$ and $r$ is even (cf. \cite[Theorem 2.3]{HW}). Since $\dim_2 \ovl{W}_v = 0$ for $v \notin T$ we see that for the purpose of this lemma we may always enlarge $T$. In particular, we may assume that $W_v = H^1(\cO_v,A[2])$ for $v \notin T$ and by global duality theory we may also insure that the group $H^1(\cO_T,A[2])$ embeds in $\sum_{v \in T} H^1(k_v,A[2])$ as a maximal isotropic subspace with respect to the sum of local cup products $\cup_{T} = \sum_{v \in T} \cup_{v}$. As explained in~\cite[\S 4]{PR12}, the pairing $\cup_{T}$ admits a quadratic enhancement, i.e., a quadratic function 
$$ q_T:  \sum_{v \in T} H^1(k_v,A[2]) \lrar \QQ/\ZZ $$ 
such that $q_T(x+y)-q_T(x)-q_T(y) = x \cup_{T} y$. Furthermore, $q_T$ vanishes on the isotropic subspaces $H^1(\cO_T,A[2])$, $\oplus_v W_v$ and $\oplus_v W^F_v$ (see \cite[Theorem 4.13]{PR12}), and so, in particular, $q_T$ admits maximal isotropic subspaces. While in general $q_T$ takes values in $\ZZ/4$, in our case $\cup_{T}$ is alternating (since Remark~\ref{r:alternating-2}), and so $q_T$ takes values in $\ZZ/2$. In particular, the pair $(\oplus_{v \in T} H^1(k_v,A[2]),q_T)$ is a finite dimensional \textbf{metabolic quadratic space}. We will now use the fact that in a metabolic quadratic space $Q$, the collection of maximal isotropic subspaces carries a natural \textbf{equivalence relation}, where two maximal isotorpic subspaces $U,U' \subseteq Q$ are equivalent if $\dim_2(U \cap U')$ has the same parity as $\dim_2U = \dim_2U'$. In particular, if $U,U',U''$ are three maximal isotropic subspaces then
\begin{equation}\label{e:equiv}
\dim_2(U \cap U') + \dim_2(U'\cap U'') + \dim_2(U'' \cap U) = \dim_2(U) \;\; (\mathrm{mod}\;2)
\end{equation}
see, e.g.,~\cite{KMR}. Now let $V_T'\subseteq \sum_{v \in T} W_v$ and $(V^F_T)' \subseteq \sum_{v \in T} W_T^F$ be the images of $\Sel_2(A)$ and $\Sel_2(A^F)$ respectively. Then $V_T'$ is also the preimage of $V_T$ and $(V_T^F)'$ is also the preimage of $V_T^F$, and so 
$$ \dim_2V_T - \dim_2V_T^F = \dim_2 V'_T - \dim_2 (V^F_T)' .$$ 
Applying~\eqref{e:equiv} to the maximal isotropic subspaces $H^1(\cO_T,A[2])$, $\oplus_v W_v$ and $\oplus_v W^F_v$ and using the fact that $W_v = H^1(\cO_v,A[2])$ for $v \notin T$ we may conclude that 
$$ \dim_2 V'_T + \dim_2 (V^F_T)' + \sum_{v \in T}\dim_2(W_v \cap W^F_v) $$
has the same parity as $\sum_{v \in T}\dim_2W_v$. It then follows that 
$$ \dim_2 V'_T + \dim_2 (V^F_T)' $$
has the same parity as $\sum_{v \in T}\dim_2\ovl{W}_v$, and so the desired result follows.
\end{proof}

The above lemma of Mazur and Rubin will be used to understand the change of Selmer group under quadratic twists. This step in Swinnerton-Dyer's method can be roughly described as performing ``$2$-descent in families''. As explained in \S\ref{s:intro}, our current application of this method includes a new step of ``second $2$-descent in families''. To this end we will need to know not only how the Selmer group changes in quadratic twists, but also how the Cassels-Tate pairing changes in quadratic twists. 

From this point on we \textbf{fix the assumption} that the Galois action on $A[2]$ is constant. By composing the Cassels-Tate pairing with the natural map $\Sel_2(A) \lrar \Sha(A)[2]$ we obtain an induced (generally degenerate) pairing
$$ \left<,\right>^{\CT}_A: \Sel_2(A) \times \Sel_2(A) \lrar \ZZ/2 .$$
We note that if $\alp,\bet \in \Sel_2(A)$ are elements which also belong to $\Sel_2(A^F)$ then the Cassels-Tate pairings $\left<\alp,\bet\right>^{\CT}_A$ and $\left<\alp,\bet\right>^{\CT}_{A^F}$ are generally different. The following proposition gives some information on the difference between $\left<\alp,\bet\right>^{\CT}_A$ and $\left<\alp,\bet\right>^{\CT}_{A^F}$. To phrase the result let us fix a finite set $S$ of places containing all the archimedean places, all the places above $2$, all the places of bad reduction for $A$, and big enough so that we can choose $\cO_S$-smooth models $\cX_\alp,\cD_\alp$ for $X_\alp$ and $D_\alp$ respectively. We then have an $\cO_S$-smooth model $\cW_\alp := \cX_\alp \setminus \cD_\alp$ for $W_\alp$ as well. For a $v \notin S$ we will denote by $(\cX_\alp)_v$ and $(\cD_\alp)_v$ the respective base changes from $\spec(\cO_S)$ to $\spec(\cO_v)$.

For the next proposition, recall that the Cassels-Tate pairing was defined using a certain homomorphism
$$ B_\alp: H^1(k,\hat{A}[2]) \lrar \Br(Y_\alp)/\Br(k) $$
as described in \S\ref{s:CT}. For every quadratic extension $F/k$ let
$$ B^F_\alp: H^1(k,\hat{A}[2]) \lrar \Br(Y^F_\alp)/\Br(k) $$
be the analogous map, constructed using the canonical isomorphism $\hat{A}[2] \cong \hat{A}^F[2]$. Recall that in~\S\ref{s:kummer} we considered a similar type of map 
$$ C_\alp: H^1(k,\Aff(\ovl{Z}_\alp,\mu_2)) \lrar \Br(W_\alp)/\Br(k), $$ 
see~\eqref{e:C-alp} and the discussion following it. In particular, if $\theta \in H^1(k,\Aff(\ovl{Z}_\alp,\mu_2))$ is an element such that $(h_\alp)_*(\theta) = \bet$, then $p_\alp^*C_\alp(\theta) = B_\alp(\bet)$ and in fact $(p^F_\alp)^*C_\alp(\theta) = B^F_\alp(\bet)$ for every $F/k$.


\begin{prop}\label{p:CT-twist}
Let $\alp,\bet \in \Sel_2(A)$ be two elements unramified over $S \setminus M$ and let $\theta \in H^1(k,\Aff(\ovl{Z}_{\alp},\mu_2))$ be an element such that $(h_\alp)_*(\theta) = \bet$, and such that the splitting field $k_{\theta}$ is unramified outside $S \setminus M$. Assume in addition that $C_\alp(\theta)$ can be represented by a Brauer element $C \in \Br(W_\alp)$ which extends to the $S$-integral model $\cW_\alp$. Let $a \in k^*$ be an element which is a unit over $S$ and a square over $S \setminus M$, and such that for each place $v$ with $\val_v(a)$ odd, the Frobenius element $\Frob_{v}(k_{\alp,\bet})$ is trivial. Then $\alp$ and $\bet$ belong to $\Sel_2(A^F)$ and
$$ \left<\alp,\bet\right>^{\CT}_{A^F} - \left<\alp,\bet\right>^{\CT}_{A} = \prod_{\val_v(a)=1\Mod\;2}\Frob_{v}(k_{\theta}/k_{\alp,\bet}) \in \Gal(k_{\theta}/k_{\alp,\bet}) \subseteq \ZZ/2 $$
\end{prop}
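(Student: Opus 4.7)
The plan is to fix the representing Brauer element $C \in \Br(W_\alpha)$ from the hypotheses and express both Cassels--Tate pairings as sums of local evaluations of $C$. Since $p_\alpha^* C$ lifts $B_\alpha(\beta)$ modulo $\Br(k)$ and $(p_\alpha^F)^* C$ lifts $B^F_\alpha(\beta)$ modulo $\Br(k)$, global reciprocity yields
$$
\left<\alpha,\beta\right>^{\CT}_{A^F} - \left<\alpha,\beta\right>^{\CT}_{A}
\;=\; \sum_v \bigl[C(p_\alpha^F(y_v^F)) - C(p_\alpha(y_v))\bigr]
$$
for any choice of adelic lifts $(y_v) \in U_\alpha(\AA_k)$ and $(y_v^F) \in U_\alpha^F(\AA_k)$. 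A first preliminary step is to verify that $\alpha,\beta$ both lie in $\Sel_2(A^F)$: at $v \in S \setminus M$, $a$ is a local square so $W_v = W_v^F$; at $v \in M$, Lemma~\ref{l:mult} shows that the common Selmer subspace consists of unramified classes, which contains $\loc_v(\alpha)$ and $\loc_v(\beta)$; at $v \notin S$ with $\val_v(a)$ odd, Lemma~\ref{l:good-red} forces $W_v \cap W_v^F = 0$, and the Frobenius assumption makes $v$ split completely in $k_{\alpha,\beta}$, so that $\loc_v(\alpha) = \loc_v(\beta) = 0$ automatically.

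The main work is a place-by-place analysis showing that all local contributions vanish outside the set of $v \notin S$ with $\val_v(a)$ odd. At $v \in S \setminus M$, $F_v$ is split and one takes $y_v^F$ to be the image of $y_v$ under the canonical identification $U_\alpha \otimes k_v \cong U_\alpha^F \otimes k_v$, making the difference zero termwise. At $v \in M$ (with $F_v$ unramified) or $v \notin S$ with $\val_v(a)$ even (again with $F_v$ unramified), both $p_\alpha(y_v)$ and $p_\alpha^F(y_v^F)$ lie in the subset $R \subseteq W_\alpha(k_v)$ consisting of points lifted via a (possibly trivial) unramified quadratic extension. The hypothesis that $k_\theta/k$ is unramified outside $S \setminus M$ makes $\theta|_{k_v}$ unramified at all such $v$, and Lemma~\ref{l:const} then shows that $C$ is constant on $R$, killing the contribution.

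The heart of the proof is the case $v \notin S$ with $\val_v(a)$ odd, where $F_v/k_v$ is a ramified quadratic extension and $A$ has good reduction. Since $\alpha|_{k_v}$ vanishes, identify $Y_\alpha \otimes k_v \cong A \otimes k_v$ and $Y_\alpha^F \otimes k_v \cong A^F \otimes k_v$. By approximation pick $y_v \in A(k_v) \setminus A[2]$ whose reduction avoids $A[2](\FF_v)$, so that $w_v = p_\alpha(y_v)$ extends to an $\cO_v$-point of the smooth model $\cW_\alpha$ and hence $C(w_v) \in \Br(\cO_v) = 0$. On the other hand, using the description $A^F(k_v) = \{x \in A(F_v) \mid \sig(x) = -x\}$ and the fact that $\sig$ acts trivially on the common residue field, every element of $A^F(k_v)$ is forced to reduce into $A[2](\FF_v)$; therefore $w_v^F = p_\alpha^F(y_v^F)$ necessarily meets the exceptional divisor $\cD_\alpha \subseteq \cX_\alpha$. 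A local coordinate computation on the quotient $\cX_\alpha = \wtl{\cA}/\iota_{\wtl{\cA}}$ shows that the intersection is transversal: a coordinate $t$ on $\cX_\alpha$ transverse to $\cD_\alpha$ is locally of the form $x^2$, and $x$ has valuation $1/2$ over $F_v$, giving $\val_v(t) = 1$. Consequently $C(w_v^F)$ is computed as the value at $\ovl{w_v^F} \in \cD_\alpha(\FF_v)$ of the residue class $\res_{\cD_\alpha}(C) \in H^1(\cD_\alpha, \ZZ/2)$. By Proposition~\ref{p:descent}(2) this residue becomes the constant class $u_\theta$ after base change to $k_{\alpha,\beta}$, and because $v$ splits completely in $k_{\alpha,\beta}$ the evaluation is exactly $\Frob_v(k_\theta/k_{\alpha,\beta})$. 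Summing over all such $v$ produces the claimed formula.

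The main obstacle is this last step: showing that on the twisted side every local lift is forced to reduce into the 2-torsion (so the image on $\cX_\alpha$ meets the exceptional divisor, and transversally), and then matching the geometric residue of $C$ against the arithmetic Frobenius of $k_\theta/k_{\alpha,\beta}$ via Proposition~\ref{p:descent}(2). The other cases reduce to clean applications of the preliminary lemmas of \S\ref{s:prelim}.
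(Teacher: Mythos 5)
Your proposal is correct and follows the paper's own argument essentially step for step: the same decomposition of the difference of pairings into local invariants of the fixed representative $C$, the same treatment of $v\in S\setminus M$ by taking equal local points, the same appeal to Lemma~\ref{l:const} at the unramified places, and the same evaluation of $\res_{\cD_\alp}(C)$ at the transversal intersection with $\cD_\alp$ combined with Proposition~\ref{p:descent}(2) at the places where $\val_v(a)$ is odd. The only (welcome) difference is that you spell out why the twisted local point is forced to reduce into the exceptional divisor and why the intersection can be taken transversal, whereas the paper simply asserts that the adelic point may be chosen with this property.
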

\begin{proof}
Let us first show that $\alp$ and $\bet$ belong to the Selmer group $\Sel_2(A^F)$ after quadratic twist. For a place $v \in S \setminus M$ we have that $a$ is a square at $v$ and hence the Selmer conditions of $A$ and $A^F$ are the same at $v$. For $w \in M$ the fact that $\alp,\bet$ satisfy the Selmer condition of $A$ at $w$ and are furthermore unramified at $w$ implies by Lemma~\ref{l:mult} that $\alp,\bet$ satisfy the Selmer condition of $A^F$ at $w$. Finally, for $v \notin S$, if $\val_v(a)$ is even then $A^F$ has good reduction at $v$ and so the Selmer condition of $A^F$ at $v$ is the same as that of $A$. On the other hand, if $\val_v(a)$ is odd then by assumption the Frobenius element $\Frob_{v}(k_{\alp,\bet})$ is trivial which means that $\alp,\bet$ restrict to $0$ in $H^1(k_v,A[2])$, and hence in particular satisfy the Selmer condition of $A^F$ at $v$. We may hence conclude that $\alp,\bet \in \Sel_2(A^F)$.

Now since $\alp$ belongs to both $\Sel_2(A)$ and $\Sel_2(A^F)$ we may find two adelic points $(x_v),(x^F_v) \in \prod_v W_\alp(k_v) \subseteq X_\alp(\AA_k)$ such that $(x_v)$ lifts to $\prod_v U_\alp(k_v) \subseteq Y_\alp(\AA_k)$ and $(x^F_v)$ lifts to $\prod_v U^F_\alp(k_v) \subseteq Y^F_\alp(\AA_k)$. Furthermore, we may insure the following:
\begin{enumerate}[(1)]
\item
For every place $v$ such that $a$ is a square at $v$ (e.g., every $v \in S \setminus M$) we may take $x^F_v = x_v$.
\item
For every $v$ such that $\val_v(a)$ is odd, we may require that the Zariski closure $\xmcal{x}^F_v \in (\cX_\alp)_v$
of $x^F_v$ intersects $(\cD_\alp)_v \subseteq (\cX_\alp)_v$ transversely at a single closed point of degree $1$.
\end{enumerate}

Let $S(a)$ denote the set of places $v$ such that $\val_v(a)$ is odd. Since $p_\alp^*C_\alp(\theta) = B_\alp(\bet) \in \Br(U_\alp)/\Br(k)$ and $(p_\alp^F)^*C_\alp(\theta) = B^F_\alp(\bet) \in \Br(U^F_\alp)/\Br(k)$ we have
$$ \left<\alp,\bet\right>^{\CT}_{A} = \sum_v \inv_v C(x_v) = \sum_{v \in S} \inv_vC(x_v) $$
and
$$ \left<\alp,\bet\right>^{\CT}_{A^F} = \sum_v \inv_v C(x^F_v) = \sum_{v \in S \cup S(a)} \inv_vC(x^F_v) .$$
Now for $v \in S \setminus M$ we have $x_v = x^F_v$ and so $C(x_v) = C(x^F_v)$. Furthermore, by Lemma~\ref{l:const} we have that $C$ evaluates to the same value on $x_w$ and $x^F_w$ for every $w \in M$. We may hence conclude that
$$ \left<\alp,\bet\right>^{\CT}_{A^F} - \left<\alp,\bet\right>^{\CT}_{A} = \sum_{v \in S(a)} \inv_vC(x^F_v) .$$
Now let $v \in S(a)$ be a place. Since $C$ extends to the $S$-integral model $\cW_\alp$ it has non-trivial residues only along $\cD_\alp$. Since $\xmcal{x}^F_v$ intersects $\cD_\alp$ transversely at a single closed point of degree $1$ we see that the residue of $\xmcal{x}^{*}C \in \Br(\spec(k_v))$ along $\spec(\FF_v)$ coincides with the restriction of $\res_{\cD_\alp}(C) \in H^1(\cD_\alp,\QQ/\ZZ)$ to the intersection point $\xmcal{x}^F_v \cap \cD_\alp$. Now since the images of $\alp$ and $\bet$ in $H^1(k_v,A[2])$ vanish it follows that the extension $k_{\alp,\bet}/k$ splits completely over $k_v$ for every $v \in S(a)$. To prove the theorem we may hence extend our scalars to $k_{\alp,\bet}$. Proposition~\ref{p:descent}(2) now tells us that the residue $\res_{\cD_\alp}(C) \in H^1(\cD_\alp,\QQ/\ZZ)$ becomes constant when restricted to $D_\alp \otimes_k k_{\alp,\bet}$ and its value there is given by the quadratic extension $k_{\theta}/k_{\alp,\bet}$. The restriction of $\res_{\cD_\alp}(C) \in H^1(\cD_\alp,\QQ/\ZZ)$ to the intersection point $\xmcal{x}^F_v \cap \cD_\alp$ is then trivial if and only if the Frobenius element $\Frob_v(k_{\theta})$ is trivial, and so the desired result follows.
\end{proof}

\section{Rational points on Kummer varieties}\label{s:proof}

Our goal in this section is to carry out the proof of Theorem~\ref{t:main}. We will do so in three steps, which are described in sections~\ref{s:fibration}, \S\ref{s:first} and \S\ref{s:second}, respectively. Each of these steps will be formalized as a proposition (see Propositions~\ref{p:fib},~\ref{p:red-sel} and~\ref{p:red-sel-3} respectively) and the proof of Theorem~\ref{t:main}, which appears in~\S\ref{ss:proof}, essentially consists of assembling these three propositions into one argument.

In the course of all three steps it will be convenient to know that the abelian varieties and associated $2$-coverings under consideration satisfy the following technical condition:
\begin{define}\label{d:adm}
Let $A$ be an abelian variety such that the Galois action on $A[2]$ is constant, let $M$ be a $2$-structure for $A$ and let $\alp \in H^1(k,A[2])$ be an element. We will say that $(A,\alp)$ is \textbf{admissible} if for every pair of functions $f: M \lrar \{0,1\}$ and $h: M \times M \lrar \{0,1\}$ such that
$$ \prod_{w \in M} \left<\alp,P_w\right>_{\lam}^{f(w)} \prod_{(w,u) \in M \times M} \left<\del(P_w),P_u\right>_{\lam}^{h(w,u)} = 1 \in H^1(k,\mu_2) $$
we also have
$$ \prod_{(w,u) \in M \times M}\left<P_w,P_u\right>_{\lam}^{h(w,u)} = 1 \in \mu_2 .$$
\end{define}


The following lemma will be used to assure that the condition of Definition~\ref{d:adm} can be assumed to hold whenever necessary.
\begin{lem}\label{l:adm}
Let $A, M$ and $\alp$ be as in Definition~\ref{d:adm}. Let $S$ be a finite set of places containing all the archimedean places, all the places above $2$ and all the places of bad reduction for $A$. Let $F = k(\sqrt{a})$ be a quadratic extension which is ramified in at least one place outside $S$. Then $(A^F,\alp)$ is admissible.
\end{lem}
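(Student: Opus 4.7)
The plan is to translate the admissibility identity for $(A^F,\alp)$ into a relation in $k^*/(k^*)^2$ involving the class $[a]$ via Lemma~\ref{l:twist}, and then to contradict it by comparing ramification at a place $v_0\notin S$ where $F$ is ramified.

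Concretely, starting from $f:M\lrar\{0,1\}$ and $h:M\times M\lrar\{0,1\}$ satisfying
$$\prod_{w\in M}\left<\alp,P_w\right>_{\lam}^{f(w)}\prod_{(w,u)\in M\times M}\left<\partial_F(P_w),P_u\right>_{\lam}^{h(w,u)}=1\in H^1(k,\mu_2),$$
I would apply Lemma~\ref{l:twist} factor by factor to the second product, writing $\left<\partial_F(P_w),P_u\right>_\lam=\left<\partial(P_w),P_u\right>_\lam\cdot[a]^{\eps_{w,u}}$, where $\eps_{w,u}\in\{0,1\}$ is defined by $\left<P_w,P_u\right>_\lam=(-1)^{\eps_{w,u}}$. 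Setting
$$\gam:=\prod_{w}\left<\alp,P_w\right>_\lam^{f(w)}\prod_{(w,u)}\left<\partial(P_w),P_u\right>_\lam^{h(w,u)},\qquad N:=\sum_{(w,u)}h(w,u)\,\eps_{w,u}\pmod 2,$$
the hypothesis is equivalent to $\gam\cdot[a]^N=1$ in $k^*/(k^*)^2$. Since $\prod_{(w,u)}\left<P_w,P_u\right>_\lam^{h(w,u)}=(-1)^N\in\mu_2$, the admissibility conclusion for $(A^F,\alp)$ amounts to $N\equiv 0\pmod 2$.

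Assuming for contradiction that $N\equiv 1\pmod 2$ yields $\gam=[a]$ in $k^*/(k^*)^2$. Pick a place $v_0\notin S$ at which $F$ is ramified, as provided by hypothesis; then $[a]$ is ramified at $v_0$. I would show, on the contrary, that $\gam$ is unramified at $v_0$: since $A$ has good reduction at $v_0\notin S$ the component group $C_{v_0}$ vanishes, so Corollary~\ref{c:p_w-3} guarantees that every factor $\left<\partial(P_w),P_u\right>_\lam$ is unramified at $v_0$; combined with the unramifiedness of each $\left<\alp,P_w\right>_\lam$ at $v_0$, this forces $\gam$ unramified at $v_0$, contradicting $\gam=[a]$ and completing the proof.

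The main obstacle is precisely the last step: the lemma as stated does not directly constrain the ramification of $\alp$, whereas the ramification comparison requires each $\left<\alp,P_w\right>_\lam$ to be unramified at the chosen $v_0$. I expect to handle this either by a tacit enlargement of $S$ absorbing the finite ramification locus of $\alp$ (which does not alter the conclusion), or by a more careful selection of $v_0$ from among the ramification places of $F$ outside $S$ so as to avoid that of $\alp$; once this is arranged, the ramification argument above closes the proof mechanically.
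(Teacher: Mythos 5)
Your argument is exactly the paper's: apply Lemma~\ref{l:twist} to rewrite the twisted identity as the untwisted product being equal to $[a]$ (using that $N$ is odd), and then contradict this by comparing ramification at a place outside $S$ where $F$ ramifies. The caveat you flag about the ramification of $\alp$ is equally present in the paper's own one-line conclusion, which tacitly uses that $\alp$ is unramified outside $S$ --- as holds in every application of the lemma, where $S$ is always taken large enough to contain the ramification locus of $\alp$ --- so your proof matches the paper's.
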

\begin{proof}
Assume that $(A^F,\alp)$ is not admissible and let $(f,h) \in (\ZZ/2)^M \times (\ZZ/2)^{M \times M}$ be such that
$$ \prod_{w \in M} \left<\alp,P_w\right>_{\lam}^{f(w)} \prod_{(w,u) \in M \times M} \left<\del_F(P_w),P_u\right>_{\lam}^{h(w,u)} = 1 \in H^1(k,\mu_2) $$
but 
$$ \prod_{(w,u) \in M \times M}\left<P_w,P_u\right>_{\lam}^{h(w,u)} = -1 .$$
According to Lemma~\ref{l:twist} and Remark~\ref{r:invariant} we then have
$$ \prod_{w \in M} \left<\alp,P_w\right>_{\lam}^{f(w)} \prod_{(w,u) \in M \times M} \left<\del(P_w),P_u\right>_{\lam}^{h(w,u)} = [a] \in H^1(k,\mu_2) $$
Since $k(\sqrt{a})$ is ramified outside $S$ and $A$ has good reduction outside $S$ we obtain a contradiction. It follows that $(A^F,\alp)$ is admissible.
\end{proof}

\subsection{Quadratic twists with points everywhere locally}\label{s:fibration}

Let $A$ be an abelian variety over $k$ such that the Galois action on $A[2]$ is constant and let $\alp \in H^1(k,A[2])$ be an element. 
Let $X_\alp = \Kum(Y_\alp)$ be the Kummer variety associated to $Y_\alp$ and suppose that $X(\AA_k)^{\Br} \neq \emptyset$. In this section we will consider the problem of finding a quadratic extension $F/k$ such that $Y^F_\alp(\AA_k) \neq \emptyset$, i.e., such that $\alp \in \Sel_2(A^F)$. Furthermore, to set some prerequisite conditions for the following steps we will wish to guarantee that $Y_\alp^F$ contains an adelic point which is furthermore orthogonal to certain Brauer elements. Recall that for every quadratic extension $F/k$ we had a homomorphism
$$ B^F_\alp: H^1(k,\hat{A}[2]) \lrar \Br(Y^F_\alp)/\Br(k) $$
which can be used to define the Cassels-Tate pairing on $A^F$ of $\alp$ against any other element. Recall also that in~\S\ref{s:kummer} we considered a similar type of map 
$$ C_\alp: H^1(k,\Aff(\ovl{Z}_\alp,\mu_2)) \lrar \Br(W_\alp), $$ 
see~\eqref{e:C-alp} and the discussion following it.

\begin{define}\label{d:C-alpha}
We will denote by $\C(W_\alp) \subseteq \Br(W_\alp)/\Br(k)$ the image of $C_\alp$. Similarly, we will denote by $\C(X_\alp) \subseteq \Br(X_\alp)/\Br(k)$ the subgroup consisting of those elements whose image in $\Br(W_\alp)/\Br(k)$ lies in $\C(W_\alp)$.
\end{define}

\begin{prop}\label{p:fibration}
Let $\B \subseteq H^1(k,A[2])$ be a finite subgroup which is orthogonal to $\alp$ with respect to $\cup_{\lambda}$. If $X_\alp$ contains an adelic point which is orthogonal to $\C(X_\alp) \subseteq \Br(X_\alp)/\Br(k)$ then there exists a quadratic extension $F/k$ such that $(A^F,\alp)$ is admissible and $Y^F$ contains an adelic point which is orthogonal to $B^F_\alp(\B) \subseteq \Br(Y^F_\alp)/\Br(k)$.
Furthermore, if $M$ is a $2$-structure for $A$ such that $\alp$ is unramified over $M$ but the image of $\alp$ in $H^1(k_w,A[2])$ is non-zero for every $w \in M$ then we may choose $F$ to be unramified over $M$.
\end{prop}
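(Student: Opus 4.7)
The plan is to run a fibration-style argument on the pencil $\sY_\alpha \to \GG_m$ of quadratic twists of $Y_\alpha$, using the Brauer-Manin hypothesis on $X_\alpha$ to produce the twist parameter $F$. After a small perturbation of the given adelic point, I would assume $(x_v) \in W_\alpha(\AA_k) \subseteq X_\alpha(\AA_k)$, which is possible because $W_\alpha \subseteq X_\alpha$ is dense open and the Brauer evaluations are continuous. The étale double cover $p^F_\alpha:U^F_\alpha \lrar W_\alpha$ is classified in $H^1(W_\alpha,\mu_2)$ by the product of $[p_\alpha]$ with the pullback of the class $[F/k] \in H^1(k,\mu_2)$, so $x_v$ lifts to $U^F_\alpha(k_v)$ (and hence to $Y^F_\alpha(k_v)$) precisely when
\[ x_v^*[p_\alpha] \cdot [F/k]|_v = 1 \in H^1(k_v,\mu_2) = k_v^*/(k_v^*)^2. \]
Thus the heart of the matter is finding a single global class $[F/k]$ whose restrictions match the local data $\{x_v^*[p_\alpha]^{-1}\}_v$, up to adjustments at a finite set of places.

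The reciprocity obstruction to this problem is governed exactly by $\C(W_\alpha) \subseteq \Br(W_\alpha)/\Br(k)$. Indeed, for any $\theta \in H^1(k,\Aff(\ovl{Z}_\alpha,\mu_2))$ lifting some $\bet \in H^1(k,\hat{A}[2])$, reciprocity for $[F/k] \cup \bet$ combined with the definition of $C_\alpha(\theta)$ (and the commutative square following \eqref{e:C-alp}) translates Hilbert reciprocity into the vanishing of $\sum_v \inv_v C_\alpha(\theta)(x_v)$. Since $(x_v)$ is orthogonal to $\C(X_\alpha)$, and this pairing factors through the evaluation of elements of $\C(W_\alpha)$ on points of $W_\alpha$, the reciprocity obstruction vanishes. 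By a standard Chebotarev/strong-approximation argument for $k^*/(k^*)^2$ one then obtains a Zariski-dense collection of global extensions $F = k(\sqrt{a})$ solving the local matching problem, with the freedom to adjust $F$ at finitely many auxiliary places.

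Once $F$ is fixed, the required properties follow. Lifting $(x_v)$ to $(y_v) \in Y^F_\alpha(\AA_k)$, orthogonality to $B^F_\alpha(\B)$ is automatic: for each $\bet \in \B$ the hypothesis $\left<\alp,\bet\right>_\lam = 1$ combined with Proposition~\ref{p:descent}(1) yields a lift $\theta_\bet \in H^1(k,\Aff(\ovl{Z}_\alpha,\mu_2))$ of $\bet$, and then $B^F_\alpha(\bet) = (p^F_\alpha)^* C_\alpha(\theta_\bet)$, so
\[ \sum_v \inv_v B^F_\alpha(\bet)(y_v) = \sum_v \inv_v C_\alpha(\theta_\bet)(x_v) = 0. \]
Admissibility of $(A^F,\alp)$ follows from Lemma~\ref{l:adm} by using the freedom at the auxiliary places to force $F$ to be ramified at some place outside the fixed set $S$, which does not interfere with the constraints solved above.

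For the final assertion, at each $w \in M$ the nontriviality of $\alp|_{k_w}$ combined with the structure of the Kummer construction over a multiplicative place implies (via Corollary~\ref{c:p_w-3} and the local analysis of $W_\alpha$ at $w$) that $x_w^*[p_\alpha]$ already represents an unramified class in $k_w^*/(k_w^*)^2$, so matching at $w$ imposes no ramification on $F$, and I can simply insist that the auxiliary places used to modify $F$ lie outside $M$. The main obstacle is the reciprocity step of the second paragraph: identifying precisely how the subgroup $\C(X_\alpha)$ encodes the global obstruction to solving the local matching problem for $[F/k]$. This relies on the short exact sequence~\eqref{e:sko-2} and the diagram relating $C_\alpha$ to $B_\alp$; the rest of the argument is a relatively standard fibration-and-approximation computation.
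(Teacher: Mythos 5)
Your outline follows the same skeleton as the paper's proof (the pencil $\sY \to \GG_m$ of quadratic twists, the classes $C_\alp(\theta)$ as the relevant Brauer classes, and a Dirichlet-type choice of the twisting parameter), but two steps that you treat as routine are precisely where the content lies, and as written they do not go through. First, the hypothesis is orthogonality to $\C(X_\alp)$, which by definition consists of classes that extend to all of $X_\alp$; the classes you actually need to pair against --- the lifts $C_\alp(\theta_\bet)$ of elements of $\B$, and the ``vertical'' classes $\vphi\cup[p_\alp]$ governing your matching problem --- are in general ramified along $D_\alp$ and live only in $\Br(W_\alp)$. A small perturbation of $(x_v)$ into $W_\alp$ neither makes the point orthogonal to these ramified classes nor even yields an adelic point of the \emph{open} variety $W_\alp$ (integral on $\cW_\alp$ at almost all places), without which the sums $\sum_v\inv_v(\cdot)$ are not even finite. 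The paper bridges this with Harari's formal lemma; ``continuity of Brauer evaluation'' does not suffice, and your identity $\sum_v \inv_v C_\alp(\theta_\bet)(x_v)=0$ is unjustified without it.

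Second, the exact matching $[F/k]|_v = x_v^*[p_\alp]$ at \emph{all} places is equivalent, by Poitou--Tate for $\mu_2$, to orthogonality of $(x_v)$ to the infinite family $\{\vphi\cup[p_\alp]\}_{\vphi\in H^1(k,\mu_2)}\subseteq\C(W_\alp)$, which no finite application of the formal lemma can arrange; so one can only match at a finite set $S$ of places. Your hedge ``up to adjustments at a finite set of places'' then hides the real work: at the finitely many places outside $S$ where $F$ ramifies one must still produce local points on $Y^F_\alp$, and one must control the contributions of all the changed local points to $\sum_v\inv_v B^F_\alp(\bet)(y_v)$ (your final computation tacitly assumes the lifted adelic point sits over the original $(x_v)$ everywhere, which fails outside $S$). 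This is exactly what the paper obtains by compactifying the pencil to $\sX\to\PP^1$ with geometrically split fibers over $0,\infty$ and invoking \cite[Theorem 9.17]{HW15}. Your treatment of the last clause (choosing $F$ unramified over $M$) is in the right spirit, but the relevant input is Lemma~\ref{l:unramified} (resting on Corollary~\ref{c:p_w-2} about component groups after ramified base change), not Corollary~\ref{c:p_w-3}.
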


The proof of Proposition~\ref{p:fibration} will require the following lemma (which is used only the guarantee the last part concerning $M$):
\begin{lem}\label{l:unramified}
Let $M$ be a $2$-structure for $A$ and let $w \in M$ be a place such that the image of $\alp$ in $H^1(k_w,A[2])$ is unramified and non-zero. If $x \in X(k_w)$ is a local point then there exists an unramified extension $F/k_w$ such that $x$ lifts to $Y^F(k_w)$ (where $Y^F$ denotes the quadratic twist of the base change of $Y$ to $k_w$).
\end{lem}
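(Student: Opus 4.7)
My plan is to translate the lemma into a statement about the unramifiedness of the class $\gamma_x := x^*[p_\alpha] \in H^1(k_w,\mu_2)$, and then to verify this by extending $x$ to an integral model.

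First, since $\alpha|_{k_w} \neq 0$ the $A[2]$-torsor $Z_\alpha$ has no $k_w$-rational point; as $D_\alpha$ projects to $Z_\alpha$, this forces $D_\alpha(k_w) = \emptyset$, so every $x \in X(k_w)$ already lies in $W_\alpha(k_w)$. By construction of $p_\alpha: U_\alpha \to W_\alpha$, the point $x$ lifts to $Y^F(k_w)$ precisely for the unique quadratic extension $F/k_w$ classified by $\gamma_x \in H^1(k_w,\mu_2)$; hence the lemma is equivalent to showing that $\gamma_x$ is unramified.

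Next I construct an integral model. Since $w$ is odd and $A[2]$ is constant, the $2$-torsion subscheme $\cA[2]$ of the N\'eron model $\cA$ of $A$ is finite \'etale of degree $2^{2g}$ over $\cO_w$, and the unramified class $\alpha|_{k_w}$ lifts uniquely to $\bar\alpha \in H^1(\cO_w, \cA[2])$. Twisting $\cA$ by $\bar\alpha$ gives a smooth $\cO_w$-scheme $\cY_\alpha$ extending $Y_\alpha$; the fixed locus of the extended antipodal involution is a finite \'etale $\cO_w$-subscheme $\cZ_\alpha$, and blowing up $\cY_\alpha$ along $\cZ_\alpha$ and taking the induced quotient yields, as in \S\ref{s:kummer}, a smooth $\cO_w$-model $\cX_\alpha$ of $X_\alpha$ with exceptional divisor $\cD_\alpha$, complement $\cW_\alpha := \cX_\alpha \setminus \cD_\alpha$, and an \'etale double cover $\cU_\alpha \to \cW_\alpha$ whose class $[\cp_\alpha] \in H^1(\cW_\alpha, \mu_2)$ extends $[p_\alpha]$. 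The crucial vanishing is $\cZ_\alpha(\FF_w) = \emptyset$: the special fibre of $\cZ_\alpha$ is the $A[2]$-torsor classified by $\Frob_w \mapsto P_\alpha \neq 0$, and translation by a nonzero element of $A[2]$ has no fixed point; consequently $\cD_\alpha(\FF_w) = \emptyset$ as well.

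Finally, one shows that $x$ extends to an $\cO_w$-section $\xmcal{x}: \spec \cO_w \to \cX_\alpha$, which necessarily factors through $\cW_\alpha$ because $\cD_\alpha$ has no $\FF_w$-points; pulling back gives $\gamma_x = \xmcal{x}^*[\cp_\alpha] \in H^1(\cO_w, \mu_2) = H^1_{\un}(k_w, \mu_2)$, proving unramifiedness. The main obstacle is establishing the existence of this $\cO_w$-section, since the N\'eron-based $\cX_\alpha$ is a priori only smooth and not proper. I expect $\cX_\alpha$ to in fact be proper over $\cO_w$, because the antipodal involution identifies the two toric ``ends'' of the multiplicative reduction of $\cA$ -- the local avatar of the good-reduction phenomenon for Kummer varieties of semistable abelian varieties recorded in \cite{SZ16}; alternatively, one can give a Galois-theoretic argument by directly ruling out the case in which some $\sigma \in I_w$ swaps the two lifts of $x$ in $\wtl{Y}_\alpha(\bar k_w)$, combining the Frobenius compatibility forced by $x \in X(k_w)$ with the component-group analysis of Corollary~\ref{c:p_w-2} to produce a residue-field contradiction.
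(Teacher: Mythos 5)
Your opening reduction is fine: since $\alp|_{k_w}\neq 0$ forces $Z_\alp(k_w)=\emptyset$ and hence $D_\alp(k_w)=\emptyset$, every $x\in X(k_w)$ lies in $W_\alp(k_w)$, and the lemma is indeed equivalent to the unramifiedness of $\gamma_x=x^*[p_\alp]$. Your integral model $\cX_\alp$ built from the N\'eron model, and the observation that $\cZ_\alp(\FF_w)=\cD_\alp(\FF_w)=\emptyset$, are also correct and match what the paper does in the proof of Lemma~\ref{l:const}. The gap is exactly where you flag it, and your proposed fix does not work. The claim that $\cX_\alp$ is proper over $\cO_w$ is false: $w$ belongs to the $2$-structure $M$, so $A$ has \emph{multiplicative} reduction at $w$; the special fibre of the N\'eron model is an extension of the component group by a torus, and its Kummer quotient is not proper (already for an elliptic curve the quotient of $\GG_m$ by $t\mapsto t^{-1}$ is $\AA^1$). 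The good-reduction phenomenon for Kummer varieties of semistable abelian varieties is real, but it requires a proper semistable model rather than the N\'eron model, and is a substantial result that cannot be invoked in passing here. Without properness the $\cO_w$-section extending $x$ need not exist: if the fibre of $\wtl{Y}_\alp\lrar X_\alp$ over $x$ splits over a \emph{ramified} quadratic extension $F$, the lift of $x$ extends only to an $\cO_F$-point of the (strictly larger) N\'eron model of $Y_{\alp,F}$, not of $\cY_\alp\otimes_{\cO_w}\cO_F$, and nothing so far rules this case out.

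The one-sentence ``alternative Galois-theoretic argument'' at the end of your proposal is in fact the paper's actual proof, but you have not carried it out, and it is where all the content lies. The paper assumes the splitting field $F/k_w$ of the fibre over $x$ is ramified, takes $y\in\wtl{Y}(F)$ above $x$ with $\sig(y)=\iota_Y(y)$ for the nontrivial inertial element $\sig$, transports it to a point $y''\in A(L)$ with $\sig(y'')=-y''$ over the ramified quadratic extension $L$ of the unramified field $K$ trivializing $\alp$, and then applies Corollary~\ref{c:p_w-2}: the $2$-part of the component group $C_L$ is cyclic of order $4$ and $\Gal(L/K)$ acts trivially on $C_L/4C_L$, so the relation $\sig(y'')=-y''$ forces $y''$ to reduce to a component of order dividing $2$; from this the reductions of $y''$ and $\sig(y'')$ coincide, so the reduction of $y''$ is a $2$-torsion point of the special fibre and the reduction of the corresponding point of $Y$ is an $\FF_w$-point of the fixed locus $Z_{\ovl{\alp}}$, contradicting $\ovl{\alp}\neq 0$. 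Supplying this component-group argument (or an equivalent substitute) is necessary to close the proof; as written, your main route fails at the properness claim and your fallback is only named, not proved.
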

\begin{proof}
Let $F/k_w$ be the quadratic extension splitting the fiber $\wtl{Y}_x$ of the $2$-covering $\wtl{Y} \lrar X$ over the point $x$. We need to show that $F/k_w$ is unramified. Assume by way of contradiction that $F/k_w$ is ramified. Since the image of $\alp$ in $H^1(k_w,A[2])$ is unramified there exists an unramified finite extension $K/k_w$ and an isomorphism $\vphi_K: Y_{K} \cong A_{K}$ such that $\vphi_K \circ \iota_Y = \iota_A \circ \vphi$.  
Let $L$ be the compositum of $F$ and $K$. Our assumption that $F/k_w$ is ramified means that $L$ is quadratic ramified extension of $K$. Let $\sig \in \Gal(L/K)$ be the non-trivial element. 

Let $\cA_K$ be a Néron model for $A_K$ and let $\cA_L$ be a Néron model for $A_L$. Let $C_K$ and $C_L$ denote the groups of components of the special fiber of $\cA_K$ and $\cA_L$ respectively. By construction there exists a point $y \in \wtl{Y}(F)$ which maps to $x$ and such that $\sig(y) = \iota_Y(y)$. Let $y' \in Y(F)$ be the image of $y$ and let $y'' \in A(L)$ be the image of $y'$ under the induced isomorphism $\vphi_L:Y_L \x{\cong}{\lrar} A_L$. In particular, we have $\sig(y'') = \iota_A(y'') = -y''$. Since $K/k$ is unramified we have an isomorphism $C_K/2C_K \cong C_w/2C_w \cong \ZZ/2$, and by Corollary~\ref{c:p_w-2} the group $C_L/4C_L$ is cyclic of order $4$ and the induced action of $\Gal(L/K)$ on $C_L/4C_L$ is trivial. It then follows that $y''$ must reduce to a component of $C_L/4C_L$ of order $2$, and hence to a component in the image of the open inclusion $C_K \hrar C_L$ induced by base change. This, in turn, implies that $y''$ and $\sig(y'')$ have the same reduction in the special fiber of $\cA_L$, and so this reduction must be a $2$-torsion point. Since $\alp$ is unramified we may find a regular model $\cY$ for $Y_{k_w}$ whose special fiber is a $2$-covering $\cY_{\FF_w}$ for $\cA_{\FF_w}$ classified by the image $\ovl{\alp} \in H^1(\FF_w,A[2])$ of $\alp$. It then follows that the reduction of $y'$ mod $w$ determines an $\FF_w$-point of the fixed point subscheme $Z_{\ovl{\alp}} \subseteq \cY_{\FF_w}$ under the induced involution. But this is now a contradiction to our assumption that $\ovl{\alp}$ is non-zero, and so we may conclude that $F/k_w$ must be unramified.  
\end{proof}

\begin{proof}[Proof of Proposition~\ref{p:fibration}]
Let $\sY= (\wtl{Y} \times \GG_m)_{/\mu_2}$ where $\mu_2$ acts on $\wtl{Y}$ by $\iota_Y$ and on $\GG_m$ by multiplication by $-1$. Projection on the second factor induces a map $\sY \lrar \GG_m/\mu_2 \cong \GG_m$ and for $t \in \GG_m(k) = k^*$ we may naturally identify the fiber $\sY_t$ with the quadratic twist $\wtl{Y}^{k(\sqrt{t})}$ (which is birational to $Y^F$). As in~\cite[\S 5]{SSD} one can show that $\sY \lrar \GG_m$ can be compactified into a fibration $\sX \lrar \PP^1$ whose fibers over $0,\infty \in \PP^1$ are geometrically split (in the sense that they contain an irreducible component of multiplicity $1$). Furthermore, arguing again as in~\cite[\S 5]{SSD} we see that the map $p:\sX \lrar \wtl{Y}/\iota_Y = X$ induced by the projection on the first factor is birational over $X$ to the projection $X \times_k \PP^1 \lrar X$. It then follows that pullback map $p^*:\Br(X) \lrar \Br(\sX)$ is an isomorphism. 

By Proposition~\ref{p:descent} we may find a finite subgroup $\C \subseteq H^1(k,\Aff(\ovl{Z}_{\alp},\mu_2))$ such that $(h_\alp)_*(\C) = \B$. Since $X(\AA_k)^{\C(X_\alp)} \neq \emptyset$, Harari's ``formal lemma'' implies the existence of an adelic point $(x_v) \in X(\AA_k)^{\Br}$ which lies in $W$ and is orthogonal to $\C(X_\alp)$ and to $C_\alp(\theta)$ for $\theta \in \C$. For every $v \in S$ let us fix a quadratic extension $F_v = k_v(\sqrt{t_v})/k_v$ such that $x_v$ lifts to a local point $y_v \in \wtl{Y}^{F_v}(k_v)$. By virtue of Lemma~\ref{l:unramified} we may assume that $F_w/k_w$ is unramified for every $w \in M$. The collection $(t_v,y_v)$ now determines an adelic point $(x_v') \in \sX(\AA_k)$ which maps to $(x_v) \in X(\AA_k)$. Since the pullback map $p^*:\Br(X) \lrar \Br(\sX)$ is an isomorphism we may deduce that $(x_v')$ belongs to the Brauer set of $\sX$, and is furthermore orthogonal to the pullbacs of the classes $C_\alp(\theta)$ for $\theta \in \C$. By~\cite[Theorem 9.17]{HW15} there exists a $t \in k^* \subseteq \PP^1(k)$ and an adelic point $(x_v') \in \sY_t(\AA_k) = \wtl{Y}^{k(\sqrt{t})/k}(\AA_k)$ with the following properties:
\begin{enumerate}[(1)]
\item
$t$ is arbitrarily close to $t_v$ for every $v \in S$.
\item
$x_v'$ is arbitrarily close to $x_v$ for every $v \in S$.
\item
$(x_v')$ is orthogonal to $p^*C_\alp(\theta)|_{\wtl{Y}^{k(\sqrt{t})/k}} = B^F_\alp((h_\alp)_*(\theta))$ for $\theta \in \C$. 
\end{enumerate}
The quadratic extension $F = k(\sqrt{t})$ now has all the required properties.
\end{proof}

Let us now specialize to the case where $A$ carries a principal polarization $\lam: A \x{\cong}{\lrar} \hat{A}$. We will furthermore \textbf{fix the assumption} that $A$ admits a $2$-structure $M$ such that $\alp$ is unramified over $M$ but has a non-trivial image in $H^1(k_w,A[2])$ for $w \in M$. 
\begin{rem}\label{r:sym}
Then the obstruction $c_{\lam} \in H^1(k,A[2])$ to realizing $\lam$ as induced by a symmetric line bundle on $A$ (see \cite{PS99}) vanishes in light~\cite[Lemma 5.1]{HS15} and our assumption that the Galois action on $A[2]$ is constant. We may hence assume without loss of generality that $\lam$ is induced by a symmetric line bundle.
\end{rem}
 
We would like to describe a particular finite subgroup $\B \subseteq H^1(k,A[2]) \cong H^1(k,\hat{A}[2])$ to which we will want to apply Proposition~\ref{p:fibration}. Let $B_0 \subseteq A[2] \otimes A[2]$ denote the kernel of the Weil pairing map $A[2] \otimes A[2] \lrar \mu_2$. The bilinear map $(P,Q) \mapsto \left<\del(P),Q\right>_{\lam}$ (see \S\ref{s:weil}) then induces a homomorphism $T: B_0 \lrar H^1(k,\mu_2)$. We will denote by $L_{\lambda}$ the minimal field extension such that $T(\beta)$ vanishes when restricted to $L_{\lambda}$ for every $\beta \in B_0$. We will further denote by $L_{\lambda,\alp} = L_{\lambda}k_{\alp}$ the compositum of $L_{\lambda}$ with the splitting field $k_\alp$ of $\alp$. Finally, we will denote by $L_{M,\alp} \subseteq L_{\lambda,\alp}$ the maximal subextension of $L_{\lambda,\alp}$ which is unramified over $M$.

\begin{rem}
The field $L_{M,\alp}$ is invariant under replacing $A$ by a quadratic twist $A^F$.
\end{rem}

We are now ready to describe the finite subgroup $\B \subseteq H^1(k,A[2])$ we wish to apply Proposition~\ref{p:fibration} to. For this it will be convenient to employ the following terminology: given a field extension $K/k$ be a field extension we will say that an element $\bet \in H^1(k,A[2])$ is \textbf{$K$-restricted} if $\bet|_{K} = 0 \in H^1(K,A[2])$. We will denote by $\Sel^K_2(A) \subseteq \Sel_2(A)$ the subgroup consisting of $K$-restricted elements. 
\begin{define}\label{d:good-choice}
We will denote by $\B_\alp \subseteq H^1(k,A[2])$ the finite subgroup consisting of those elements $\bet \in H^1(k,A[2])$ which are both $L_{M,\alp}$-restricted and satisfy $\alp \cup_{\lam} \bet = 1 \in H^1(k,\mu_2)$.
\end{define}

The following proposition summarizes the main outcome of this section. 

\begin{prop}\label{p:fib}
If $X_\alp(\AA_k)^{\C(X_\alp)} \neq \emptyset$ then there exists a quadratic extension $F/k$ such that $(A^F,\alp)$ is admissible, $\alp$ belongs to $\Sel_2(A^F)$, and $\alp$ is orthogonal to $\Sel^{L_{M,\alp}}_2(A^F)$ with respect to the Cassels-Tate pairing. Furthermore, if $\alp$ is unramified over $M$ but the image of $\alp$ in $H^1(k_w,A[2])$ is non-zero for every $w \in M$ then we may choose $F$ to be unramified over $M$.
\end{prop}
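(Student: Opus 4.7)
The plan is to apply Proposition~\ref{p:fibration} with the specific choice $\B = \B_\alp$. The hypothesis that $\B_\alp$ is orthogonal to $\alp$ with respect to $\cup_\lam$ is built into the very definition of $\B_\alp$, so it holds tautologically. Since $X_\alp(\AA_k)^{\C(X_\alp)} \neq \emptyset$ by assumption, Proposition~\ref{p:fibration} produces a quadratic extension $F/k$ with $(A^F,\alp)$ admissible and an adelic point $(y_v) \in Y^F_\alp(\AA_k)$ orthogonal to $B^F_\alp(\B_\alp)$. The existence of the adelic point on $Y^F_\alp$ already shows $\alp \in \Sel_2(A^F)$, and the last clause about unramifiedness over $M$ is inherited directly from the corresponding clause of Proposition~\ref{p:fibration}.

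The remaining task is to identify the obtained orthogonality to $B^F_\alp(\B_\alp)$ with the Cassels--Tate orthogonality statement. First I would check that $\Sel^{L_{M,\alp}}_2(A^F) \subseteq \B_\alp$. Any such $\bet$ is $L_{M,\alp}$-restricted by definition, and since $\alp$ and $\bet$ both lie in $\Sel_2(A^F)$, their local restrictions lie in the maximal isotropic subspace $W^F_v \subseteq H^1(k_v,A[2])$ for the local Tate pairing~\eqref{e:local}. Hence all local cup products $\alp|_v \cup_v \bet|_v$ vanish, and the global class $\alp \cup_\lam \bet \in H^2(k,\mu_2)$ lies in the kernel of the (injective) localisation map on $\Br(k)[2]$, so it vanishes. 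By Proposition~\ref{p:descent}(1), $\bet$ then admits a lift $\theta \in H^1(k,\Aff(\ovl{Z}_\alp,\mu_2))$.

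Fixing a Brauer representative $C \in \Br(W_\alp)$ of $C_\alp(\theta)$, the pullback $(p^F_\alp)^* C$ represents $B^F_\alp(\bet)$ upon restriction to $U^F_\alp$. The adelic point $(y_v)$ coming out of Proposition~\ref{p:fibration} lies in $U^F_\alp(\AA_k)$, since the proof there builds it by lifting an adelic point of $W_\alp$ through the fibration $\sX \lrar \PP^1$. The Cassels--Tate formula of Definition~\ref{d:CT} applied to $A^F$ then yields
$$ \left<\alp,\bet\right>^{\CT}_{A^F} \;=\; \sum_v \inv_v C(p^F_\alp(y_v)) \;=\; \sum_v \inv_v B^F_\alp(\bet)(y_v) \;=\; 0, $$
the final equality being the orthogonality of $(y_v)$ to $B^F_\alp(\B_\alp)$. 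Since $\bet$ was an arbitrary element of $\Sel^{L_{M,\alp}}_2(A^F)$, this completes the argument.

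The proposition is essentially an assembly of the machinery built in \S\ref{s:kummer} and of Proposition~\ref{p:fibration}; the only non-formal ingredient is Proposition~\ref{p:descent}(1), whose role is precisely to produce, for each $\bet$ satisfying $\alp \cup_\lam \bet = 0$, the class $C_\alp(\theta) \in \Br(W_\alp)/\Br(k)$ whose evaluation along an adelic point computes the Cassels--Tate pairing. The only mild subtlety to pay attention to is that the adelic point produced by Proposition~\ref{p:fibration} must live in $U^F_\alp(\AA_k)$ (so that the pulled-back representative can be evaluated there), but this is transparent from the construction of the point in the proof of Proposition~\ref{p:fibration}.
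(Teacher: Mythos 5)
Your proposal is correct and follows essentially the same route as the paper, whose proof is exactly the one-line instruction to apply Proposition~\ref{p:fibration} with $\B = \B_\alp$ and to note that every $\bet \in \Sel^{L_{M,\alp}}_2(A^F)$ satisfies $\alp \cup_{\lam} \bet = 1$ by local duality, hence lies in $\B_\alp$. The extra details you supply (the local-global vanishing of the cup product, the lift $\theta$ via Proposition~\ref{p:descent}(1), and the identification of the adelic evaluation with the Cassels--Tate pairing via Definition~\ref{d:CT}) are precisely the implicit content of that one line.
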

\begin{proof}
Apply Proposition~\ref{p:fibration} with the subgroup $\B_\alp \subseteq H^1(k,A[2])$ of Definition~\ref{d:good-choice}, and use the fact that any element $\bet \in \Sel^{L_{M,\alp}}_2(A^F)$ satisfies $\alp \cup_{\lam} \bet = 1 \in H^1(k,\mu_2)$ by local duality.
\end{proof}

\begin{rem}\label{r:2-pri-alg}
The group $\C(X_\alp) \subseteq \Br(X_\alp)/\Br(k)$ belongs in fact to $\Br_1(X_\alp)/\Br(k)$, where $\Br_1(X_{\alp})$ is the kernel of the map $\Br(X_\alp) \lrar \Br(\ovl{X}_\alp)$. Furthermore, since $\C(X_\alp)$ is a finite $2$-torsion group we can find a finite group $\C' \subseteq \Br_1(X)\{2\}$ in the $2$-primary part of $\Br_1(X)$ that maps surjectively onto $\C(X_\alp)$. We hence see that Proposition~\ref{p:fib} only needs to assume the triviality of the $2$-primary algebraic Brauer-Manin obstruction.
\end{rem}

\subsection{First descent}\label{s:first}

In this section we resume all the notation of \S\ref{s:fibration}, and we keep the assumption that the Galois action on $A[2]$ is trivial, that $A$ carries a principal polarization $\lam: A \x{\cong}{\lrar} \hat{A}$ (automatically induced by a symmetric line bundle, see Remark~\ref{r:sym}), and that the Kummer surface $X_\alp = \Kum(Y_\alp)$ contains an adelic point which is orthogonal to the subgroup $\C(X_\alp) \subseteq \Br(X_\alp)/\Br(k)$. We will also, as above, assume that $A$ admits a $2$-structure $M$ such that $\alp$ is unramified over $M$ but has a non-trivial image in $H^1(k_w,A[2])$ for $w \in M$. Applying Proposition~\ref{p:fib} we may find a quadratic extension $F/k$ which is unramified over $M$ and such that
\begin{enumerate}
\item[(A1)]\label{c:A1} $(A^F,\alp)$ is admissible.
\item[(A2)]\label{c:A2} $\alp$ belongs to $\Sel_2(A^F)$ and is orthogonal to $\Sel^{L_{M,\alp}}_2(A^F)$ with respect to the Cassels-Tate pairing. \end{enumerate}
Replacing $A$ with $A^F$ and using the canonical isomorphism $\Kum(Y_\alp) \cong \Kum(Y^F_\alp)$ we may assume without loss of generality that Conditions~\hyperref[c:A1]{(A1)} and~\hyperref[c:A2]{(A2)} above already hold for $A$ and $\alp$.  Our goal in this subsection is to find a quadratic extension $F/k$ such that Conditions~\hyperref[c:A1]{(A1)} and~\hyperref[c:A2]{(A2)} still hold for $A^F$ and such that in addition $\Sel_2(A^F)$ is generated by $\Sel^{L_{M,\alp}}_2(A^F)$ and the image of $A[2]$. We will do so by showing that if this is not the case then there is always a quadratic twist making the Selmer rank decrease.

\begin{prop}\label{p:red-sel}
Let $A$ an abelian variety as above with a $2$-structure $M$ and let $\alp \in \Sel_2(A)$ be an element which is unramified over $M$ and orthogonal to $\Sel^{L_{M,\alp}}_2(A)$ with respect to the Cassels-Tate pairing. Assume that Conditions~\hyperref[c:A1]{(A1)} and~\hyperref[c:A2]{(A2)} hold for $(A,\alp)$. If $\Sel_2(A)$ is not generated by $\Sel^{L_{M,\alp}}_2(A)$ and $\del(A[2])$ then there exists a field extension $F = k(\sqrt{a})$ with $a$ is a unit over $M$ such that: 
\begin{enumerate}[(1)]
\item
Conditions~\hyperref[c:A1]{(A1)} and~\hyperref[c:A2]{(A2)} hold for $(A^F,\alp)$.
\item
$\dim_2\Sel_2(A^F) < \dim_2\Sel_2(A)$.
\end{enumerate}
\end{prop}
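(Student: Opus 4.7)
The plan is to construct $F$ by twisting at a single auxiliary place $v_0$ chosen by Chebotarev, applying Lemma~\ref{l:mazur-formula} for the Selmer decrease and Proposition~\ref{p:CT-twist} to preserve condition~(A2).

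First, by hypothesis pick $\bet \in \Sel_2(A)$ outside the subgroup $\langle \Sel_2^{L_{M,\alp}}(A), \del(A[2]) \rangle$. By Remark~\ref{r:semi-direct}, subtract an element of $\del(A[2])$ to assume $\bet$ is unramified over $M$; the hypothesis then forces $\bet|_{L_{M,\alp}} \neq 0$, so $\bet$ cuts out a nontrivial quadratic extension $E_\bet/L_{M,\alp}$. For each element in a basis of $\Sel_2^{L_{M,\alp}}(A)$, use Proposition~\ref{p:descent}(1) to pick a lift $\theta \in H^1(k, \Aff(\ovl{Z}_\alp, \mu_2))$ with splitting field $k_\theta$ unramified outside $S \setminus M$, and let $K$ be the compositum of $L_{M,\alp}$ with all these $k_\theta$.

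Next, apply Chebotarev in $K \cdot E_\bet / k$ to find $v_0 \notin S$ with $\Frob_{v_0}(K/k) = 1$ but $\Frob_{v_0}(E_\bet / L_{M,\alp}) \neq 1$; this ensures $\alp|_{v_0} = 0$ and $\gam|_{v_0} = 0$ for every $\gam \in \Sel_2^{L_{M,\alp}}(A)$, while $\bet|_{v_0} \neq 0$. Using strong approximation, construct $a \in k^*$ which is a square at every place of $S$, a unit at $M$, has $\val_{v_0}(a) = 1$, and has even valuation elsewhere; set $F = k(\sqrt{a})$. Then $F$ is locally trivial on $S$ and ramified only at $v_0$, so Lemma~\ref{l:adm} gives condition~(A1) for $(A^F, \alp)$.

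For the Selmer decrease, apply Lemma~\ref{l:mazur-formula} with $T = \{v_0\}$: since $v_0$ is a place of good reduction for $A$ and $F/k$ is ramified there, Lemma~\ref{l:good-red} yields $W_{v_0} \cap W_{v_0}^F = 0$; as $\bet \in \Sel_2(A)$ has $\bet|_{v_0} \neq 0$, $\bet$ drops out of $\Sel_2(A^F)$, forcing a strict decrease in $\dim_2\Sel_2(A^F)$ once the Mazur--Rubin parity cooperates. Should the parity be wrong, adjoin a second auxiliary place $v_0'$ with $\Frob_{v_0'}$ trivial on all the extensions involved to flip the parity without disturbing the count. Finally, to check (A2) for $(A^F, \alp)$, note that any $\gam \in \Sel_2^{L_{M,\alp}}(A^F)$ is unramified over $M$ (since $L_{M,\alp}$ is) and splits at $v_0$, hence also lies in $\Sel_2(A)$; Proposition~\ref{p:CT-twist} then gives
$$\left<\alp, \gam\right>^{\CT}_{A^F} - \left<\alp, \gam\right>^{\CT}_{A} = \Frob_{v_0}(k_\theta/k_{\alp,\gam}),$$
which vanishes because $k_\theta \subseteq K$ and $\Frob_{v_0}(K/k) = 1$. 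Since (A2) for $A$ yields $\left<\alp, \gam\right>^{\CT}_A = 0$, condition (A2) persists for $(A^F, \alp)$.

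The main obstacle is coordinating these Chebotarev conditions simultaneously with the Mazur--Rubin parity while transferring (A2) to $A^F$. The admissibility assumption~(A1), together with Proposition~\ref{p:descent}(1), is what allows the lift fields $k_\theta$ to be taken unramified outside $S \setminus M$, so that the single Chebotarev condition $\Frob_{v_0}(K/k)=1$ captures all the Frobenius-triviality statements needed both for the Selmer argument and for the Cassels--Tate twist formula.
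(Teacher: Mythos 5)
There is a genuine gap in the Selmer-decrease step. Applying Lemma~\ref{l:mazur-formula} with $T=\{v_0\}$ for a single place of good reduction at which $F$ ramifies gives $r=\dim_2\ovl{W}_{v_0}=2g$ and the inequality $\dim_2 V_T+\dim_2 V_T^F\le 2g$, hence
$$\dim_2\Sel_2(A^F)-\dim_2\Sel_2(A)=\dim_2 V_T^F-\dim_2 V_T\le 2g-2\dim_2 V_T .$$
A strict decrease therefore requires $\dim_2 V_T>g$, i.e.\ the image of $\Sel_2(A)$ in $\ovl{W}_{v_0}$ must fill more than half of the $2g$ available dimensions. Your construction only guarantees $\dim_2 V_T\ge 1$ (from $\bet|_{v_0}\neq 0$); the observation that ``$\bet$ drops out of $\Sel_2(A^F)$'' does not prevent $\dim_2 V_T^F$ from being as large as $2g-1$, in which case the Selmer rank \emph{increases}. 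The parity clause of Lemma~\ref{l:mazur-formula} controls only the parity of $r-\dim_2V_T-\dim_2V_T^F$, not the sign of $\dim_2V_T^F-\dim_2V_T$, so adjoining a further auxiliary place cannot repair this.

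This is exactly why the paper's proof is more elaborate. There, $v_0$ is chosen by Chebotarev so that $\left<\del(P),Q\right>_{\lam}$ is a square at $v_0$ if and only if $\left<P,Q\right>_{\lam}=1$ --- this is where admissibility (Condition (A1)) is really used, via the condition $b\notin R+V_\alp+B_0$, not merely through Lemma~\ref{l:adm} --- which forces the local images of $\{\del(P_w)\}_{w\in M}$ to be linearly independent and to span all of $W_{v_0}$, so that $\dim_2V_T\ge 2g$ already from the $2$-torsion. A second place $v_1$ with $\div(a)=v_0+v_1$, together with a quadratic-reciprocity argument, then forces $a$ to be a \emph{non-square at the place $w_{\bet}\in M$} witnessing $\bet\notin\Sel_2^{L_{M,\alp}}(A)$; including $w_{\bet}$ in $T$ (where $\dim_2\ovl{W}_{w_{\bet}}=1$ by Lemma~\ref{l:mult}) and invoking Corollary~\ref{c:p_w-3} to see that $\bet$, being unramified at $w_{\bet}$, cannot share its image in $V_T$ with any $\del(Q')$, yields $\dim_2V_T\ge 2g+1>\tfrac{1}{2}(4g+1)$ and hence the strict decrease. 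Both mechanisms --- the full local image of $\del(A[2])$ at $v_0$, and the inert twist at $w_{\bet}$ separating $\bet$ from $\del(A[2])$ --- are absent from your argument. (Your treatment of the persistence of (A2) via Proposition~\ref{p:CT-twist} and the equality $\Sel_2^{L_{M,\alp}}(A^F)=\Sel_2^{L_{M,\alp}}(A)$ is essentially the paper's and is fine.)
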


\begin{proof}

Let $S$ be a finite set of places containing all the archimedean places, all the places above $2$ and all the places of bad reduction for $A$ or $X_\alp$, as well as a set of generators for the class group of $k$. 
Since the Selmer condition subgroups $W_v \subseteq H^1(k_v,A[2])$ are isotropic with respect to $\cup_{v}$ it follows that for every $\bet \in \Sel_2(A)$ we have $\alp \cup_{\lam} \bet = 1 \in H^2(k,\mu_2)$. By Proposition~\ref{p:descent} we may choose a subgroup $\C_\alp \subseteq H^1(k,\Aff(\ovl{Z}_{\alp},\mu_2))$ such that $(h_\alp)_*$ maps $\C_\alp$ isomorphically onto $\Sel^{L_{M,\alp}}_2(A)$, and such that for every $\theta \in \C_\alp$ the splitting field $k_{\theta}$ is unramified outside $S \setminus M$. Furthermore, by possibly enlarging $S$ we may assume that we have an $\cO_S$-smooth $S$-integral model $\cW_\alp$ for $W_\alp$ and such that for every $\theta \in \C_\alp$ the element $C_\alp(\theta) \in \Br(W_\alp)/\Br(k)$ can be represented by a Brauer element on $W_\alp$ which extends to $\cW_\alp$. 

Our method for constructing the desired element $a \in k^*$ consists of two parts. In the first part we find two places $v_0,v_1 \notin S$ whose associated Frobenius elements in $\Gam_k$ satisfy suitable constraints. These constraints imply in particular that there exists an element $a \in k^*$ such that $\div(a) = v_0 + v_1$. In the second part of the proof we show that the quadratic extension $F = k(\sqrt{a})$ has the desired properties.

By Remark~\ref{r:semi-direct} every element of $\Sel_2(A)$ can be written uniquely as a sum of an element unramified over $M$ and an element in the image of $A[2]$. In particular, the Selmer group $\Sel_2(A)$ is generated by $\Sel^{L_{M,\alp}}_2(A)$ and $\partial(A[2])$ if and only if $\Sel^{L_{M,\alp}}_2(A)$ contains all elements which are unramified over $M$. Let us hence assume that there exists a $\bet \in \Sel_2(A)$ which is unramified over $M$ and does not belong to $\Sel^{L_{M,\alp}}_2(A)$. 

Let $V = A[2] \oplus A[2] \oplus (A[2] \otimes A[2])$ and consider the homomorphism
$$ \Phi:V \lrar H^1(k,\mu_2) $$
given by the formula $\Phi(P_0,P_1,\sum_i P_i \otimes Q_i) = \left<\alp,P_0\right>_{\lam}\cdot\left<\bet,P_0\right>_{\lam} \cdot \prod_i \left<\del(P_i),Q_i\right>_{\lam}$. Let $R \subseteq V$ be the kernel of $\Phi$ and let $k_\phi/k$ be the minimal Galois extension such that all the elements in the image of $\Phi$ vanish when restricted to $k_\phi$. Then $k_\phi/k$ is a $2$-elementary extension and we have a natural isomorphism $\Gal(k_\phi/k) \cong \Hom(V/R,\mu_2)$. 
 
Let $B_0 \subseteq A[2] \otimes A[2]$ be the kernel of the Weil pairing $A[2] \otimes A[2] \lrar \mu_2$ and let $b \in A[2] \otimes A[2]$ be an element which is not in $B_0$, so that $A[2] \otimes A[2]$ is generated over $B_0$ by $b$. Let $V_\alp \subseteq V$ be the image of the left most $A[2]$ factor. The admissibility of $(A,\alp)$ is then equivalent to the following inclusion of subgroups of $V$:
$$ R \cap (V_\alp + A[2] \otimes A[2]) \subseteq V_\alp + B_0, $$ 
which in turn is equivalent to the statement
$$ b \notin R + V_\alp + B_0 .$$
On the other hand, the fact that $\bet$ is not $L_{M,\alp}$-restricted means that there exists a $w_{\bet} \in M$ such that $\Phi(P_{w_{\bet}})$ does not belong to the subgroup of $H^1(k,\mu_2)$ spanned by $\Phi(V_\alp)$ and $\Phi(B_0)$, a statement that is equivalent to 
$$ P_{w_{\bet}} \notin R + V_\alp + B_0 .$$ 
We may hence conclude that there exists a homomorphism $h: V \lrar \mu_2$ which vanishes on $R + V_\alp + B_0$ but does not vanish on $P_{w_{\bet}}$ and does not vanish on $b$. Now since $h$ vanishes on $R$ it determines a well homomorphism $h': V/R \lrar \mu_2$ which we may consider as an element of $\Gal(k_\phi/k)$. By Chabotarev's theorem we may choose a place $v_0 \notin S$ such that $\Frob_{v_0}(k_\phi) = h'$. By construction we now have that $\left<\alp,P\right>_{\lam}$ is a square in $k_{v_0}$ for every $P \in A[2]$, that $\left<\bet,P_{w_\bet}\right>_{\lam}$ is not a square in $k_{v_0}$, and that $\left<\del(P),Q\right>_{\lam}$ is a square in $k_{v_0}$ if and only if $\left<P,Q\right>_{\lam} = 1$. We shall now proceed to choose $v_1$.

Let is fix a finite large Galois extension $L/k$ which is unramified outside $S \setminus M$ and which contains all the splitting fields $k_{\theta}$ above. Let $\fm$ be the modulus which is a product of $8$ and all the places in $S$ except $w_{\bet}$, let $k_{\fm}$ be the ray class field of $\fm$, and let us set
$L' = k_{\fm}L$. Since $S$ contains a set of generators for the class group we may find a quadratic extension $K_{w_{\bet}}/k$ which is purely ramified at $w_{\bet}$ and is unramified outside $S$. Since $L'$ is unramified at $w_{\bet}$ while $K_{w_{\bet}}$ is purely ramified at $w_{\bet}$ it follows that $K_{w_{\bet}}$ is linearly disjoint from $L'$. We may hence deduce the existence of a place $v_1 \notin S \cup \{v_0\}$ such that
\begin{enumerate}[(1)]
\item
$\Frob_{v_1}(L') = \Frob_{v_0}(L')^{-1}$.
\item
$\Frob_{v_0}(K_{w_{\bet}})\cdot\Frob_{v_1}(K_{w_{\bet}})$ is the non-trivial element of $\Gal(K_{w_{\bet}}/k) \cong \ZZ/2$.
\end{enumerate}
By property $(1)$ above we see that the divisor $v_0 + v_1$ pairs trivially with the kernel of $H^1(k,\QQ/\ZZ) \lrar H^1(k_{\fm},\QQ/\ZZ)$ and so there exists an $a \in k^*$ which is equal to $1$ mod $\fm$ and such that
$\div(a) = v_0 + v_1$.
In particular, we see that $a$ is a square at each $v \in S \bksl \{w\}$. By quadratic reciprocity we see that $a$ is \textbf{not} a square in $w$. We now claim that $F = k(\sqrt{a})$ will give the desired quadratic twist. 

Let $T = \{w_{\bet},v_0,v_1\}$. Then $W^F_v = W_v$ for every $v \notin T$. By Lemmas~\ref{l:good-red} and~\ref{l:mult} we see that $\dim_2(\ovl{W}_{w_{\bet}}) = 1$ and $\dim_2(\ovl{W}_{v_0}) = \dim_2(\ovl{W}_{v_1}) = 2g$. Using Lemma~\ref{l:mazur-formula} we may conclude that
$$ \dim_2\Sel_2\left(A^F\right) - \dim_2\Sel_2\left(A\right) = \dim_2V^F_T - \dim_2V_T $$
with
$$ \dim_2V^F_T + \dim_2V_T \leq \dim_2\ovl{W}_{v_0} + \dim_2\ovl{W}_{v_1} + \dim_2\ovl{W}_{w_{\bet}}  = 4g + 1 .$$
To show that the $2$-rank of the Selmer group decreased we hence need to show that $\dim_2V_T \geq 2g+1$.

Since $\left<\del(P_{w_0}),P_{w_1}\right>_{\lam}$ is a square in $k_{v_0}$ if and only if $\left<P_{w_0},P_{w_1}\right>_{\lam} = 1$ we deduce that the local images of $\{\del(P_w)\}_{w \in M}$ at $v_0$ are linearly independent and hence span a $2^g$-dimensional subspace of $W_{v_0} = \ovl{W}_{v_0}$, which is consequently all of $W_{v_0}$. It will hence suffice to show that the image of $\bet$ in $V_T$ is not generated by the local images of $\{\del(P_w)\}_{w \in M}$. Let $Q' \in A[2]$ be such that the local image of $\del(Q')$ and $\bet$ at $v_0$ coincides. By construction $\left<\bet,P_{w_{\bet}}\right>_{\lam}$ is not a square in $k_{v_0}$ and so $\left<\del(Q'),P_{w_\bet}\right>_{\lam}$ is not a square at $v_0$. This means that $\left<Q',P_{w_\bet}\right>_{\lam} = -1$ and so by Corollary~\ref{c:p_w-3} we know that $\left<\del(Q'),Q_{w_\bet}\right>_{\lam}$ is ramified at $w_{\bet}$. Since $\left<\bet,Q_{w_{\bet}}\right>_{\lam}$ is unramified at $w_{\bet}$ it follows that $\del(Q')$ and $\bet$ have different local images at $w_{\bet}$. We hence deduce that the image of $\bet$ in $V_T$ cannot be spanned by images of $\{\del(P_w)\}_{w \in M}$ and so $\dim(V_T) \geq 2g+1$. This implies that
$$ \dim_2\Sel_2(A^F) < \dim_2\Sel_2(A) .$$

We now claim that $\Sel^{L_{M,\alp}}_2(A^F) = \Sel^{L_{M,\alp}}_2(A)$. Let $\bet' \in H^1(k,A[2])$ be an $L_{M,\alp}$-restricted element. Then $\bet'$ is unramified over $M$ and in particular at $w_{\bet}$. By Lemma~\ref{l:mult} this means that $\bet'$ satisfies the Selmer condition of $A$ at $w_{\bet}$ if and only if $\bet'$ satisfies the Selmer condition of $A^F$ at $w_{\bet}$. By our choice of $v_0$ and $v_1$ we see that $L_{M,\alp}$ splits completely in $v_0$ and $v_1$ and so the local images of $\bet'$ are trivial at $v_0$ and $v_1$. This implies that for $L_{M,\alp}$-restricted elements the local Selmer conditions for $A$ and $A^F$ are identical at every place and so $\Sel^{L_{M,\alp}}_2(A^F) = \Sel^{L_{M,\alp}}_2(A)$.

Finally, applying Proposition~\ref{p:CT-twist} to $\alp$ and any $L_{M,\alp}$-restricted element $\bet'$, and using the fact that $\Frob_{v_1}(k_{\theta}) = \Frob_{v_0}(k_{\theta})^{-1}$ we may now conclude that $\alp$ belongs to $\Sel_2(A^F)$ and is furthermore orthogonal to $\Sel^{L_{M,\alp}}_2(A^F)$ with respect to the Cassels-Tate pairing associated to $A^F$. By Lemma~\ref{l:adm} $(A^F,\alp)$ is admissible and so so Conditions~\hyperref[c:A1]{(A1)} and~\hyperref[c:A2]{(A2)} hold for $(A^F,\alp)$, as desired.

\end{proof}

\subsection{Second descent}\label{s:second}

In this section we resume all the notation of \S\ref{s:fibration} and \S\ref{s:first}, and we keep the assumption that the Galois action on $A$ is constant, that $A$ carries a principal polarization $\lam: A \x{\cong}{\lrar} \hat{A}$, and that the Kummer surface $X_\alp = \Kum(Y_\alp)$ contains an adelic point which is orthogonal to the subgroup $\C(X_\alp) \subseteq \Br(X_\alp)/\Br(k)$. Until now we have only used the fact that $A$ possess a $2$-structure $M \subseteq \Om_k$. For the purpose of the second descent phase we will need to utilize the stronger assumption that appears in Theorem~\ref{t:main}, namely, that $A$ can be written as a product $A = \prod_i A_i$ such that each $A_i$ has an \textbf{extended} $2$-structure $M_i \subseteq \Om_k$, and such that $A_j$ has good reduction over $M_i$ for $j \neq i$. 
Applying Proposition~\ref{p:red-sel} repeatedly using $M = \cup_i M_i$ we may find a quadratic extension $F/k$, unramified over $M$, and such that 
\begin{enumerate}
\item[(B1)]\label{c:B1}
Each $(A^F,\alp)$ is admissible.
\item[(B2)]\label{c:B2}
$\alp$ belongs to $\Sel_2(A^F)$ and is orthogonal to $\Sel^{L_{M,\alp}}_2(A^F)$ with respect to the Cassels-Tate pairing.
\item[(B3)]\label{c:B3}
$\Sel_2(A^F)$ is generated by $\Sel^{L_{M,\alp}}_2(A^F)$ and $\del(A^F[2])$.
\end{enumerate}
Let $\Sel^{\circ}_2(A) \subseteq \Sel_2(A)$ denote the subgroup consisting of those elements which are orthogonal to every element in $\Sel_2(A)$ with respect to the Cassels-Tate pairing. We note that Conditions~\hyperref[c:B1]{(B1)}~\hyperref[c:B2]{(B2)} and~\hyperref[c:B3]{(B3)} imply in particular 
\begin{enumerate}
\item[(B4)]\label{c:B4}
$\alp$ belongs to $\Sel^{\circ}_2(A)$.
\end{enumerate}

Replacing $A$ with $A^F$ and using the canonical isomorphism $\Kum(Y_\alp) \cong \Kum(Y^F_\alp)$ we may assume without loss of generality that Conditions~\hyperref[c:B1]{(B1)} and~\hyperref[c:B4]{(B4)} already hold for $A$. We now observe that we have a natural direct sum decomposition $\Sel_2(A) \cong \oplus_i \Sel_2(A_i)$ 
and so we can write $\alp = \alp_1 + ... + \alp_n$ with $\alp_i \in \Sel_2(A_i) \subseteq \Sel_2(A)$. Condition~\hyperref[c:B4]{(B4)} now implies that $\alp_i \in \Sel^{\circ}_2(A)$ for every $i=1,...,n$. Our goal in this subsection is to show that under these conditions one can find a quadratic extension $F/k$ such that $\Sel^{\circ}_2(A_i^F)$ is generated by $\alp_i$ and the image of $A_i[2]$. Equivalently, we will show that 
$\Sel^{\circ}(A^F)$ is generated by $\alp_1,...,\alp_n$ and the image of $A[2]$.

We begin with the following proposition, whose goal is to produce quadratic twists which induce a prescribed change to the Cassels-Tate pairing in suitable circumstances. We note that while the Weil pairing takes values in $\mu_2$ (which we write multiplicatively), the Cassels-Tate pairing takes values in $\ZZ/2$ (which we write additively). For the purpose of the arguments in this section, it will be convenient to use the fact that the Galois modules $\mu_2$ and $\ZZ/2$ are isomorphic. Although this isomorphism is unique, since it involves a change between additive and multiplicative notation it seems appropriate to take it into account explicitly. We will hence use the notation
$$ (-1)^{(-)}: \ZZ/2 \lrar \mu_2 $$
for the isomorphism in one direction and the notation
$$ \log_{(-1)}(-): \mu_2 \lrar \ZZ/2 $$
for the isomorphism in the other direction. Given a subgroup $B \subseteq H^1(k,A[2])$ and an element $\sig \in \Gam_k$, we will denote by $\rho_{\sig}: B \lrar A[2]$ the homomorphism sending $\bet$ to $\ovl{\bet}(\sig) \in A[2]$. Here 
$$ \ovl{\bet}: \Gam_k \lrar A[2] $$ 
is the canonical homomorphism associated to $\bet$ (which can be considered as obtained via Construction~\ref{c:galois}, or simply by taking the unique $1$-cycle representing $\bet$). Given $\sig,\tau \in \Gam_k$ we will denote by $\rho_{\sig} \wedge \rho_{\tau}: B \times B \lrar \ZZ/2$ the antisymmetric form 
$$ (\rho_{\sig} \wedge \rho_{\tau})(\bet,\bet') = \log_{(-1)}\left<\rho_{\sig}(\bet),\rho_{\tau}(\bet')\right>_{\lam} + \log_{(-1)}\left<\rho_{\sig}(\bet'),\rho_{\tau}(\bet)\right>_{\lam} .$$

\begin{prop}\label{p:red-sel-2}
Let $B \subseteq \Sel_2(A)$ be a subgroup containing only elements which are unramified over $M$. 
For any two elements $\sig,\tau \in \Gam_k$ 
there exists a field extension $F = k(\sqrt{a})$, unramified over $M$, and such that
\begin{enumerate}[(1)]
\item
$\Sel_2(A^F)$ contains $B$ and $\dim_2\Sel_2(A^F) = \dim_2\Sel_2(A)$.
\item
For every $\bet,\bet' \in B$ we have $\left<\bet,\bet'\right>^{\CT}_{A^F} = \left<\bet,\bet'\right>^{\CT}_{A} + (\rho_{\sig} \wedge \rho_{\tau})(\bet,\bet')$. 
\end{enumerate}
\end{prop}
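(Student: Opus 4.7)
The plan is to adapt the Chebotarev construction from the proof of Proposition~\ref{p:red-sel}, taking $F = k(\sqrt{a})$ with $\div(a) = v_0 + v_1$ modulo a divisor supported on $S$, for two auxiliary places $v_0, v_1$ whose Frobenius elements realize, through Proposition~\ref{p:CT-twist}, the prescribed shift $(\rho_\sig \wedge \rho_\tau)$ in the Cassels-Tate pairing. First I would enlarge $S$ to include the archimedean places, places above $2$, places of bad reduction, a set of generators for the class group, and enough further places so that for each pair $(\bet, \bet') \in B \times B$ — for which $\bet \cup_\lam \bet' = 0 \in H^2(k, \mu_2)$ holds by isotropy of the Selmer conditions under the local Tate pairing — Proposition~\ref{p:descent}(1) yields a lift $\theta_{\bet, \bet'} \in H^1(k, \Aff(\ovl Z_\bet, \mu_2))$ of $\bet'$ whose splitting field $k_{\theta_{\bet, \bet'}}$ is unramified outside $S$ and whose Brauer element $C_\bet(\theta_{\bet, \bet'})$ extends to the corresponding $S$-integral model.

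The central computation is cocycle-theoretic. Writing the $\mu_2$-component of the homomorphism $\ovl{\theta_{\bet,\bet'}}: \Gam_k \lrar \Aff(\ovl Z_\bet, \mu_2) \rtimes A[2]$ from Construction~\ref{c:galois} as $L$, the semi-direct-product multiplication gives the cocycle identity $L_{\sig\tau} = L_\sig + L_\tau + \log_{(-1)}\!\left<\rho_\sig(\bet), \rho_\tau(\bet')\right>_\lam$, and hence, reducing mod $2$,
$$L_{\sig\tau} + L_{\tau\sig} \equiv (\rho_\sig \wedge \rho_\tau)(\bet, \bet') \pmod{2}.$$
This identifies $(\rho_\sig \wedge \rho_\tau)(\bet, \bet')$ with a Frobenius-type value on the quadratic extension $k_{\theta_{\bet, \bet'}}/k_{\bet, \bet'}$, namely that of the quadratic character $u_{\theta_{\bet, \bet'}} \in H^1(k_{\bet, \bet'}, \mu_2)$ that controls the change formula of Proposition~\ref{p:CT-twist} via Proposition~\ref{p:descent}(2). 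Using this, I would apply Chebotarev to find $v_0, v_1 \notin S$ that split $K_B := \prod_{\bet, \bet' \in B} k_{\bet, \bet'}$ completely — so that elements of $B$ vanish locally at $v_0, v_1$ and thereby remain Selmer under any quadratic twist ramified only there — while the pair $(\Frob_{v_0}, \Frob_{v_1})$ induces on each character $u_{\theta_{\bet, \bet'}}$ a Frobenius sum equal to $(\rho_\sig \wedge \rho_\tau)(\bet, \bet')$. Since $S$ generates the class group, one then obtains $a \in k^*$ with $\div(a) = v_0 + v_1$, $a$ a unit on $M$ and a square on $S \setminus M$, with quadratic reciprocity absorbing any remaining sign constraint.

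Property (2) then follows from applying Proposition~\ref{p:CT-twist} pairwise to all $(\bet, \bet') \in B \times B$. For property (1), the inclusion $B \subseteq \Sel_2(A^F)$ is automatic from the complete splitting of $K_B$ at $v_0, v_1$, and the equality $\dim_2 \Sel_2(A^F) = \dim_2 \Sel_2(A)$ will be obtained from the Mazur-Rubin Lemma~\ref{l:mazur-formula}: with $T = \{v_0, v_1\}$ one has $r = 4g$, and the Chebotarev choice of Frobenius can be further adjusted so that the images $V_T$ and $V_T^F$ each have dimension $2g$. The main obstacle of the proof is the Chebotarev step: exhibiting a single pair $(v_0, v_1)$ whose Frobenius elements simultaneously satisfy all the bilinear Frobenius equations — one per pair in $B \times B$ — together with the splitting requirement on $K_B$ and the Selmer-balance condition at $T$. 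This amounts to a linear-algebra compatibility statement marrying the cocycle identity above with the density of Frobenius classes supplied by Chebotarev.
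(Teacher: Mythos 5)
Your skeleton is the right one and your central cocycle identity is exactly the computation the paper performs, but you have left the decisive step unresolved. You end by saying that the ``main obstacle'' is a Chebotarev/linear-algebra compatibility statement: finding a single pair $(v_0,v_1)$ whose Frobenii simultaneously satisfy one bilinear equation per pair $(\bet,\bet')\in B\times B$, split $K_B$, and balance the Selmer conditions. That compatibility is not a routine linear-algebra fact to be deferred --- it is the heart of the proof, and it is resolved by an observation you did not make: your identity $L_{\sig\tau}+L_{\tau\sig}\equiv(\rho_{\sig}\wedge\rho_{\tau})(\bet,\bet')$ is precisely the statement that the quadratic character $u_{\theta_{\bet,\bet'}}$ evaluated on the \emph{commutator} $\eps=\sig\tau\sig^{-1}\tau^{-1}\in\Gam_k$ equals $(\rho_{\sig}\wedge\rho_{\tau})(\bet,\bet')$ (the $A[2]$-components of $\ovl{\theta}(\sig\tau)$ and $\ovl{\theta}(\tau\sig)$ agree, so their quotient is the constant affine-linear map with value $\left<\rho_{\sig}(\bet'),\rho_{\tau}(\bet)\right>_{\lam}\left<\rho_{\tau}(\bet'),\rho_{\sig}(\bet)\right>_{\lam}$). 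One therefore takes $\Frob_{v_0}(L)=\eps_L$ and $\Frob_{v_1}(L)=1$. This single choice kills all the apparent tension at once: every bilinear equation is satisfied simultaneously because each reads $u_{\theta_{\bet,\bet'}}(\eps)=(\rho_{\sig}\wedge\rho_{\tau})(\bet,\bet')$, which holds identically; and the complete splitting of $K_B$ at $v_0$ is automatic because $\eps$ is a commutator and hence dies in every abelian (in particular every $2$-elementary) extension of $k$, so $\loc_{v_0}\bet=\loc_{v_1}\bet=0$ for all $\bet\in B$ with no further condition imposed. Without identifying the target Frobenius class as the commutator, there is no reason the system of conditions you list is consistent, so as written the argument has a genuine gap.

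Two smaller points. First, for the equality $\dim_2\Sel_2(A^F)=\dim_2\Sel_2(A)$ you say the Chebotarev choice ``can be further adjusted'' so that $V_T$ and $V_T^F$ each have dimension $2g$; the concrete mechanism in the paper is to additionally require $v_0$ to be inert in each $K_{w,w}$ (the extension cut out by $\left<\del(Q_w),Q_w\right>_{\lam}$) and split in $K_{w,w'}$ for $w\neq w'$, which forces the local images of $\del(A[2])$ at $v_0$ to be linearly independent; this is compatible with the commutator condition because $L$ is unramified over $M$ while $K_{w,w}$ is ramified at $w$, so the two fields are linearly disjoint. Second, the lifts $\theta_{\bet,\bet'}$ must be chosen with splitting field unramified outside $S\setminus M$ (not merely outside $S$), and one needs $a$ to be a square on all of $S$ here (there is no place of $M$ in the support of the ramification in this proposition), which is arranged via the ray class field of a modulus supported on $S$ together with $\Frob_{v_1}(k_{\fm})=\Frob_{v_0}(k_{\fm})^{-1}$.
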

\begin{proof}

Let $S$ be a finite set of places which contains all the archimedean places, all the places above $2$, all the places of bad reduction for $A$ or $X_\alp$, as well as a set of generators for the class group. In particular, every $\bet \in B$ is unramified outside $S \setminus M$. Now for any two $\bet,\bet' \in B \subseteq \Sel_2(A)$ we have $\bet \cup_{\lam} \bet' = 1 \in H^2(k,\mu_2)$, and so by Proposition~\ref{p:descent} we may choose an element $\theta \in H^1(k,\Aff(Z_\bet,\mu_2))$ such that $(h_\bet)_*(\theta) = \bet'$ and such that the splitting field $k_{\theta}/k_{\bet,\bet'}$ is unramified outside $S\setminus M$. By possibly enlarging $S$ we may assume that we have an $\cO_S$-smooth $S$-integral model $\cW_\alp$ for $W_\alp$ and that for every $\theta$ as above the element $C_{\bet}(\theta) \in \Br(W_\alp)/\Br(k)$ can be represented by a Brauer element on $W_\alp$ which extends to $\cW_\alp$. We may consequently fix a finite large Galois extension $L/k$ which is unramified outside $S \setminus M$ and which contains all the splitting fields $k_{\theta}$.

For each $w,w' \in M$ let $K_{w,w'}$ be the quadratic extension classified by the element $\left<\del(Q_w),Q_{w'}\right> \in H^1(k,\mu_2)$. By Corollary~\ref{c:p_w-3} we have that $K_{w,w}$ is ramified at $w$ while $K_{w,w'}$ is unramified over $M$ for $w \neq w'$. Since $L$ is unramified over $M$ it follows that the compositum of the $K_{w,w}$'s is linearly independent from the compositum of $L$'s with the $K_{w,w'}$'s for $w \neq w'$. Let $\fm$ be the modulus which is a product of $8$ and all the places in $S$, and let $k_{\fm}$ be the ray class field of $\fm$. We note that $k_{\fm}$ contains all the $K_{w,w'}$ for $w,w' \in M$. Let $\eps = \sig\tau\sig^{-1}\tau^{-1} \in \Gam_k$ be the commutator of $\sig$ and $\tau$ and let $\eps_L \in \Gal(L/k)$ be its corresponding image.

Since the image of $\eps$ is trivial in any the Galois group of any abelian extension of $k$ it follows from Chabotarev's density theorem that there exists places $v_0,v_1 \in \Om_k$ such that 
\begin{enumerate}[(1)]
\item
$\Frob_{v_0}(L) = \eps_L$.
\item
$\Frob_{v_1}(L) = 1$.
\item
$v_0$ is inert $K_{w,w}$ for every $w \in M$ and splits in $K_{w,w'}$ for every $w \neq w'$.
\item
$\Frob_{v_1}(k_{\fm}) = \Frob_{v_0}(k_{\fm})^{-1}$.
\end{enumerate}
It follows from (4) that the divisor $v_0 + v_1$ pairs trivially with the kernel of $H^1(k,\QQ/\ZZ) \lrar H^1(k_{\fm},\QQ/\ZZ)$ and so there exists an $a \in k^*$ which reduces to $1$ mod $\fm$ and such that
$\div(a) = v_0 + v_1$. In particular, $a$ is a square at each $v \in S$. We now claim that $F = k(\sqrt{a})$ will give the desired quadratic twist. 

We begin by Claim (1) above. Since $a$ is a square at each $v \in S$ it follows the Selmer condition for $A$ and $A^F$ is the same for every $v \in S$. Since the image of $\eps$ is trivial in any the Galois group of any abelian extension of $k$ we have by construction that $v_0$ and $v_1$ split in $k_{\bet}$ for every $\bet \in B$. It then follows that for $v \in \{v_0,v_1\}$ we have $\loc_v\bet = 0 \in H^1(k_v,A[2])$ for every $\bet \in B$. In particular, every $\bet \in B'$ satisfies the Selmer condition of $A^F$ for every $v \in S \cup \{v_0,v_1\}$ and is unramified outside $S \cup \{v_0,v_1\}$, implying that $B \subseteq \Sel_2(A^F)$. 

To see that $\dim_2\Sel_2(A^F) = \dim_2\Sel_2(A)$ we use Lemma~\ref{l:mazur-formula} with $T = \{v_0,v_1\}$. Indeed, $W^F_v = W_v$ for every $v \notin T$ and by Lemma~\ref{l:good-red} we see that $\dim_2(\ovl{W}_{v_0}) = \dim_2(\ovl{W}_{v_1}) = 2g$. We then have by Lemma~\ref{l:mazur-formula} that
$$ \dim_2\Sel_2\left(A^F\right) - \dim_2\Sel_2\left(A\right) = \dim_2V^F_T - \dim_2V_T $$
with
$$ \dim_2V^F_T + \dim_2V_T \leq 4g $$
Since the image of $\del(A[2])$ in $V^F_T$ spans a $2g$-dimensional subspace it will suffice to show that the image of $\del(A[2])$ in $V_T$ is $2g$-dimensional as well.  But this is now a direct consequence of the fact that $\left<\del(Q_w),Q_{w'}\right>$ is a square at $v_0$ if and only if $w = w'$, by construction.

We now prove Claim (2). Fix $\bet,\bet' \in B$ and let $\theta \in H^1(k,\Aff(Z_\bet,\mu_2))$ be the element chosen above such that $(h_\bet)_*(\theta) = \bet'$. Let $\eps_{\theta} \in \Gal(k_{\theta}/k)$ be the image of $\eps_L \in \Gal(L/k)$. By Proposition~\ref{p:CT-twist} we have
$$ \left<\bet,\bet'\right>^{\CT}_{A^F} - \left<\bet,\bet'\right>^{\CT}_{A} = \Frob_{v_0}(k_{\theta}/k_{\alp,\bet}) \cdot \Frob_{v_1}(k_{\theta}/k_{\alp,\bet}) = \eps_{\theta} \in \Gal(k_{\theta}/k_{\alp,\bet}) \subseteq \ZZ/2 .$$
Recall from~\S\ref{s:kummer} the commutative diagram
\begin{equation}
\xymatrix@C=1.1em{
1 \ar[r] & \Gal(k_{\theta}/k_{\bet}) \ar[r]\ar@{^(->}[d]^{\ovl{\theta}|_{k_{\bet}}} & \Gal(k_{\theta}/k) \ar[r]\ar@{^(->}[d]^{\ovl{\theta}} & \Gal(k_{\bet}/k) \ar[r]\ar@{^(->}[d]^{\ovl{\bet}} & 1 \\
1 \ar[r] & \Aff(Z_{\bet},\mu_2) \ar[r] & \Aff(Z_{\bet},\mu_2) \rtimes A[2] \ar[r] & A[2] \ar[r] & 1 \\
}
\end{equation}
with exact rows and injective vertical maps (cf.~\eqref{e:split}). Let us write $\ovl{\theta}(\sig) = (x,\ovl{\bet}(\sig)) \in \Aff(Z_\bet,\mu_2) \rtimes A[2]$ and $\ovl{\theta}(\tau) = (y,\ovl{\bet}(\tau)) \in \Aff(Z_\bet,\mu_2) \rtimes A[2]$ for suitable $x,y  \in \Aff(Z_\bet,\mu_2)$. We may then compute that
$$ [\ovl{\theta}(\sig),\ovl{\theta}(\tau)] = (xy^{\ovl{\bet}(\sig)}x^{\ovl{\bet}(\tau)}y,0) $$
Now $x \in \Aff(Z_\bet,\mu_2)$ is an affine-linear map whose homogeneous part is $\ovl{\bet'}(\sig) \in A[2]$ and so $x\cdot x^{\ovl{\bet}(\tau)}$ is the constant affine-linear map with value $\left<\ovl{\bet'}(\sig),\ovl{\bet}(\tau)\right>_{\lam}$. Similarly, $y\cdot y^{\ovl{\bet}(\sig)}$ is the constant affine-linear map with value $\left<\ovl{\bet'}(\tau),\ovl{\bet}(\sig)\right>_{\lam}$. It then follows that the image $\eps_{\theta} \in \Gal(k_{\theta}/k)$ of $\eps$ is trivial if and only if $(\rho_{\sig} \wedge \rho_{\tau})(\bet,\bet') = 0$, and so the desired result follows. \end{proof}

We are now ready to prove the main result of this section, showing that if $\Sel_2^{\circ}(A)$ is not generated by $\alp_1,...,\alp_n$ and the image of $A[2]$ then there exists a field extension $F = k(\sqrt{a})$ such that $\Sel_2^{\circ}(A^F)$ is strictly smaller then $\Sel_2^{\circ}(A)$. 
\begin{prop}\label{p:red-sel-3}
Let $A_1,...,A_n$ be abelian varieties as above such that each $A_i$ is equipped with an extended $2$-structure $M_i$ over which $A_j$ has good reduction for $j \neq i$. Let $A = \prod_i A_i$ and let $\alp \in \Sel_2(A)$ be a non-degenerate element (see Definition~\ref{d:nondeg}) which is unramified over $M = \cup_i M_i$ and write $\alp = \sum_i \alp_i$ with $\alp_i \in \Sel_2(A_i)$. Assume that Conditions~\hyperref[c:B1]{(B1)} and \hyperref[c:B4]{(B4)} are satisfied. If $\Sel_2^{\circ}(A)$ is not generated by $\alp_1,...,\alp_n$ and the image of $A[2]$ then there exists a field extension $F = k(\sqrt{a})$ with $a$ is a unit over $M$ and such that 
\begin{enumerate}[(1)]
\item
Conditions~\hyperref[c:B1]{(B1)} and \hyperref[c:B4]{(B4)} hold for $(A^F,\alp)$.
\item
$\dim_2\Sel^{\circ}_2(A^F) < \dim_2\Sel^{\circ}_2(A)$.
\end{enumerate}
\end{prop}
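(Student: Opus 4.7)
The plan is to apply Proposition~\ref{p:red-sel-2} to a carefully chosen subgroup $B \subseteq \Sel_2(A)$ of elements unramified over $M$ and a pair $\sig, \tau \in \Gam_k$, producing a quadratic twist $F = k(\sqrt{a})$ that preserves $\alp$ in the radical of the Cassels--Tate pairing while expelling a chosen witness. By assumption there exists $\bet \in \Sel_2^\circ(A) \setminus (\langle \alp_1, \ldots, \alp_n\rangle + \partial(A[2]))$. By Remark~\ref{r:semi-direct}, every element of $\Sel_2(A)$ splits as a sum of an element unramified over $M$ and one of $\partial(A[2])$; since $\partial(A[2]) \subseteq \Sel_2^\circ(A)$ automatically (Cassels--Tate factors through $\Sha$), we may subtract the $\partial$-component of $\bet$ and assume $\bet$ is itself unramified over $M$. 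Let $B$ be a finite subgroup of unramified-over-$M$ elements of $\Sel_2(A)$ containing $\alp_1,\ldots,\alp_n$, $\bet$, and any further ``test'' classes needed below. The non-degeneracy of $\alp$ identifies $\ovl{\alp}\colon\Gal(k_\alp/k) \x{\cong}{\lrar} A[2]$.

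The crucial step is to find $\sig, \tau \in \Gam_k$ such that the alternating form $\omega := \rho_\sig \wedge \rho_\tau$ on $B$ satisfies (i) $\omega(\alp_i, \gam) = 0$ for every $i$ and every $\gam \in B$, and (ii) $\omega(\bet, \gam_0) \neq 0$ for some $\gam_0 \in B$. The block-diagonal decomposition of the Weil pairing under $A = \prod_j A_j$ yields $\omega = \sum_j \omega^{(j)}$ with each $\omega^{(j)}$ involving only the $A_j$-components, and the hypotheses on $\bet$ respect the direct-sum decomposition $\Sel_2(A) = \bigoplus_j \Sel_2(A_j)$, so some index $j$ satisfies $\bet_j \notin \langle \alp_j\rangle + \partial(A_j[2])$. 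Taking $\sig, \tau \in \Gam_{k_\alp}$ forces $\ovl{\alp_i}(\sig) = \ovl{\alp_i}(\tau) = 0$ and so automatically secures (i). For (ii), the natural candidate $\gam_0$ is $\partial(P)$ for a suitable $P \in A_j[2]$ such that $\partial(P)$ is unramified over $M$ and has splitting field not contained in $k_\alp$; then $\ovl{\partial(P)}|_{\Gam_{k_\alp}}$ is non-trivial and can be made to pair under the Weil pairing with $\ovl{\bet_j}|_{\Gam_{k_\alp}}$ for suitably chosen $\sig, \tau \in \Gam_{k_\alp}$, realizable by a Chebotarev density argument. The existence of such a $P$ is the main technical obstacle of the proof; this is where the \emph{extended} $2$-structure $M_j$ plays its decisive role, supplying (through Corollary~\ref{c:p_w-3} and Lemma~\ref{l:p_w-1}) the additional ramification data beyond an ordinary $2$-structure needed to manufacture the required ``second-descent class'' $\partial(P)$ without disturbing the constraints imposed by the $\alp_i$'s.

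Once $\sig, \tau$ are in hand, Proposition~\ref{p:red-sel-2} yields $F = k(\sqrt{a})$ with $a$ a unit over $M$, $B \subseteq \Sel_2(A^F)$, $\dim_2 \Sel_2(A^F) = \dim_2 \Sel_2(A)$, and the modification formula $\langle\bet', \gam'\rangle^{\CT}_{A^F} = \langle\bet', \gam'\rangle^{\CT}_A + \omega(\bet', \gam')$ on $B \times B$. Combining (i) with the automatic vanishing of Cassels--Tate pairings against $\partial_F(A^F[2])$ and with the analogue of Remark~\ref{r:semi-direct} for $A^F$ (whose $2$-structure coincides with that of $A$ since $a$ is a unit over $M$), one checks $\alp \in \Sel_2^\circ(A^F)$, establishing (B4) for $(A^F, \alp)$. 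Condition (ii) forces $\bet \notin \Sel_2^\circ(A^F)$; since the Cassels--Tate pairing is alternating, the resulting rank gain is an even integer $\geq 2$, and therefore $\dim_2 \Sel_2^\circ(A^F) \leq \dim_2 \Sel_2^\circ(A) - 2 < \dim_2 \Sel_2^\circ(A)$. Finally, admissibility (B1) for $(A^F, \alp)$ follows from Lemma~\ref{l:adm} because $a$ is ramified at the auxiliary places $v_0, v_1$ produced by Proposition~\ref{p:red-sel-2}, which lie outside the bad set $S$.
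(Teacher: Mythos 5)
Your proposal has a genuine gap at its central step: the existence of the witness class $\gam_0$ against which $\bet$ is to acquire a non-trivial Cassels--Tate pairing. You propose $\gam_0 = \partial(P)$ for some $P \in A_j[2]$ with $\partial(P)$ unramified over $M$; but for a ($2$-)structure the reduction map $A[2] \lrar \prod_{w \in M} C_w/2C_w$ is injective, and by (the proof of) Corollary~\ref{c:p_w-3} the class $\del(P)$ is ramified at every $w\in M$ where $P$ reduces non-trivially, so $\del(P)$ is unramified over $M$ only for $P=0$. Hence no such $\gam_0$ exists, and in particular no such class can be placed in the subgroup $B$ of Proposition~\ref{p:red-sel-2}, whose hypothesis requires all elements of $B$ to be unramified over $M$. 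The problem is not repairable within $\Sel_2(A)$: if, say, $\Sel_2(A)$ is generated by $\alp$, $\bet$ and $\del(A[2])$, then the unramified part $B$ is contained in $\langle\alp,\bet\rangle$, and any alternating form $\rho_\sig\wedge\rho_\tau$ satisfying your condition (i) automatically kills $\omega(\bet,\gam)$ for every $\gam\in B$, so a single application of Proposition~\ref{p:red-sel-2} can never expel $\bet$ from the radical. This is precisely why the paper's proof proceeds in two stages: it first performs a quadratic twist ramified at one place $w_0$ of the \emph{extended} $2$-structure $M_i$ --- chosen, using the non-degeneracy of $\alp$, so that $\left<\bet_i,Q_{w_0}\right>_{\lam}$ is non-trivial and distinct from $\left<\alp_i,Q_{w_0}\right>_{\lam}$ --- which \emph{increases} $\dim_2\Sel_2$ by exactly one and manufactures a genuinely new class $\gam$ together with a $\sig\in\Gam_{k_U}$ with $\ovl{\gam}(\sig)=Q_{w_0}$; only then is Proposition~\ref{p:red-sel-2} applied, to the group generated by $U$ and $\gam$, to arrange $\left<\alp,\gam\right>^{\CT}=0$ and $\left<\bet,\gam\right>^{\CT}\neq 0$. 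This rank-raising step is the actual role of the extended $2$-structure, which your sketch assigns instead to producing a non-existent unramified $\del(P)$.

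A secondary gap: even granting a witness, your final count does not follow. From $\bet\notin\Sel_2^{\circ}(A^F)$ and $\alp\in\Sel_2^{\circ}(A^F)$ you cannot conclude $\dim_2\Sel_2^{\circ}(A^F)<\dim_2\Sel_2^{\circ}(A)$, because $\Sel_2(A^F)$ is a different subgroup of $H^1(k,A[2])$ and its radical need not be contained in $\Sel_2^{\circ}(A)$; the parity remark about the alternating pairing controls the corank of the radical inside $\Sel_2(A^F)$, not a comparison between the two radicals. The paper closes this by showing explicitly that the new radical $V^{\circ}$ (in the unramified part) is contained in $U$ and omits $\bet$, and then invoking the direct sum decomposition with $\del(A[2])$ from Remark~\ref{r:semi-direct}.
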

\begin{proof}

Let $S$ be a finite set of places which contains all the archimedean places, all the places above $2$, all the places of bad reduction for $A$ or $X_\alp$, as well as a set of generators for the class group. In particular, every $\bet \in U$ is unramified outside $S \setminus M$. Now for any two $\bet,\bet' \in U$ we have $\bet \cup_{\lam} \bet' = 1 \in H^2(k,\mu_2)$, and so by Proposition~\ref{p:descent} we may choose an element $\theta \in H^1(k,\Aff(Z_\bet,\mu_2))$ such that $(h_\bet)_*(\theta) = \bet'$ and such that the splitting field $k_{\theta}/k_{\bet,\bet'}$ is unramified outside $S\setminus M$. By possibly enlarging $S$ we may assume that we have an $\cO_S$-smooth $S$-integral model $\cW_\alp$ for $W_\alp$ and that for every $\theta$ as above the element $C_{\bet}(\theta) \in \Br(W_\alp)/\Br(k)$ can be represented by a Brauer element on $W_\alp$ which extends to $\cW_\alp$. 

Our general strategy for proving Proposition~\ref{p:red-sel-3} is the following. We first find a suitable $a \in k^*$ which is a unit over $S$ and such that after a quadratic twist by $F = k(\sqrt{a})$ the dimension of the Selmer group $\Sel_2(A^F)$ \textbf{increases} by $1$, where the new element $\gam$ has certain favorable properties. We then use Proposition~\ref{p:red-sel-2} in order to find a second quadratic twist which suitably modifies the Cassels-Tate pairing between $\gam$ and the elements from $\Sel_2(A)$. This last part is done in a way that effectively decreases the number of elements in the Selmer group which are in the kernel of the Cassels-Tate pairing.

Let $U \subseteq \Sel_2(A)$ denote the subgroup consisting of those elements which are unramified over $M$. By Remark~\ref{r:semi-direct} we have that $\Sel_2(A)$ decomposes as a direct sum $\Sel_2(2) = U \oplus \del(A[2])$. Let $U^\circ = U \cap \Sel_2^{\circ}$. Since $\Sel_2^{\circ}$ contains $\del(A[2])$ we obtain a direct sum decomposition $\Sel_2^{\circ}(A) = U^{\circ} \oplus \del(A[2])$. Similarly, for every $i=1,...,n$ we have a direct sum decomposition $\Sel_2(A_i) = U_i \oplus \del(A_i[2])$ and $\Sel^{\circ}_2(A_i) = U^{\circ}_i \oplus \del(A_i[2])$, where $U_i = U \cap \Sel_2(A_i)$ and $U^{\circ}_i = U^{\circ} \cap \Sel_2(A_i)$.

Let $\bet \in U^{\circ}$ be an element which does not belong to the subgroup of $U^{\circ}$ spanned by $\alp_1,...,\alp_n$ and let us write $\bet = \sum_i \bet_i$ with $\bet_i \in \Sel_2(A_i)$. It then follows that $\bet_i \in U^{\circ}_i$. Since $\bet$ is not spanned by the $\alp_i$'s there exists an $i$ such that $\bet_i \notin 0,\alp_i$. 

Let us now write $M_i = \{w_0,...,w_{2g}\}$. By the definition of an extended $2$-structure we may choose, for every $j=0,...,2g-1$ a $2$-torsion point $Q_j \in A_i[2]$ such that the image of $Q_j$ in $C_{w_{j'}}/2C_{w_{j'}}$ is non-trivial if and only if $j = j',j'+1$. Similarly, let $Q_{2g}$ be such that the image of $Q_j$ in $C_{w_{j'}}/2C_{w_{j'}}$ is non-trivial if and only if $j = 2g,0$. We note that by construction $\sum_{j=0}^{2g} Q_j = 0$. 

We now claim that there exists a $j \in \{0,...,2g\}$ such that $\left<\bet_i,Q_j\right>_{\lam}$ is non-trivial and different from $\left<\alp_i,Q_j\right>_{\lam}$. Indeed, assume otherwise and let $J \subseteq \{0,...,2g\}$ be the subset of those indices for which $\left<\bet_i,Q_j\right>_{\lam} = \left<\alp_i,Q_j\right>_{\lam}$ (so that $\left<\bet_i,Q_j\right>_{\lam} = 0$ for $j \notin J$). Then 
$$ \prod_{j \in J} \left<\alp_i,Q_j\right>_{\lam} = \prod_{j=0}^{2g} \left<\bet_i,Q_j\right>_{\lam} = \left<\bet_i,\sum_{j=0}^{2g} Q_i\right>_{\lam} = 1 \in H^1(k,\mu_2) .$$
Since $\bet_i \neq 1,\alp_i$ we have that $\emptyset \subsetneq J \subsetneq \{0,...,2g\}$, and so we obtain a contradiction to our assumption that $\alp$ is non-degenerate. We may hence conclude that $\left<\bet_i,Q_j\right> \neq 1,\left<\alp_i,Q_j\right>$ for some $j=0,...,2g$. To fix ideas let us assume that we have this for $j=2g$. We shall now remove $w_{2g}$ from $M_i$ and work with the resulting $2$-structure $M_i' = M_i \setminus \{w_{2g}\} = \{w_0,...,w_{2g-1}\}$. Let $\{P_w\}_{w \in M'}$ and $\{Q_w\}_{w \in M_i'}$ be the corresponding dual bases of $A_i[2]$. By comparing images in $\oplus_{w \in M_i'} C_{w}/2C_{w}$ we see that the $2$-torsion point $Q_{w_0} \in A_i[2]$ coincides with the point $Q_{2g}$ we had before. In particular, we have that $\left<\bet_i,Q_{w_0}\right> \neq 0,\left<\alp_i,Q_{w_0}\right>$. 

Let us now complete $M'_i$ into a $2$-structure for $A$ by choosing, for every $i' \neq i$, a $2$-structure $M'_{i'} \subseteq M_{i'}$, and setting $M' = M'_1 \cup ... \cup M'_n$. We now note that since $A_i[2]$ is orthogonal to $A_{i'}[2]$ with respect to the Weil pairing when $i \neq i'$ it follows that $\left<\bet,Q_{w_0}\right> = \left<\bet_i,Q_{w_0}\right>$ and $\left<\alp,Q_{w_0}\right> = \left<\alp_i,Q_{w_0}\right>$. We have thus found a point $Q_{w_0} \in M'$ such that $\left<\bet,Q_{w_0}\right> \neq 1,\left<\alp,Q_{w_0}\right>$. We may now forget about the factorization of $A$ into a product of the $A_i$'s, and reconsider it as a single abelian variety.


For each $w,w' \in M'$ let $K_{w,w'}$ be the quadratic extension corresponding to the element $\left<\del(Q_w),Q_{w'}\right> \in H^1(k,\mu_2)$. By Corollary~\ref{c:p_w-3} we have that $K_{w,w}$ is ramified at $w$ while $K_{w,w'}$ is unramified over $M'$ when $w \neq w'$. Let $k_U$ be the compositum of $k_{\bet'}$ for $\bet' \in U$. By our assumptions $k_U$ is unramified over $M'$ and so the compositum of the $K_{w,w}$'s is linearly independent from the compositum of $k_U$ with the $K_{w,w'}$'s for $w \neq w'$. 

Let us fix a finite large Galois extension $L/k$ which is unramified outside $S \setminus M$ and which contains all the splitting fields $k_{\theta}$. Let $\fm$ be the modulus which is a product of $8$ and all the places in $S \setminus \{w_0\}$, and let $k_{\fm}$ be the ray class field of $\fm$. We note that $k_{\fm}$ contains all the $K_{w,w'}$ for $(w,w') \neq (w_0,w_0)$ as well as $k_U$. Let $L' = k_{\fm}L$. Since $L'$ is unramified over $M'$ we see that $L'$ is linearly independent from the compositum of all the $K_{w,w}$. By Chabotarev's density theorem that there exists places $v_0,v_1 \in \Om_k$ such that 
\begin{enumerate}[(1)]
\item
$v_0$ splits in $k_U$.
\item
$v_0$ is inert $K_{w,w}$ for every $w \in M'$ and splits in $K_{w,w'}$ for every $w \neq w'$.
\item
$v_1$ splits in $K_{w_0,w_0}$.
\item
$\Frob_{v_1}(L') = \Frob_{v_0}(L')^{-1}$.
\end{enumerate}
It then follows that the divisor $v_0 + v_1$ pairs trivially with the kernel of $H^1(k,\QQ/\ZZ) \lrar H^1(k_{\fm},\QQ/\ZZ)$ and so there exists an $a \in k^*$ which reduces to $1$ mod $\fm$ and such that
$\div(a) = v_0 + v_1$. In particular, $a$ is a square at each $v \in S \setminus \{w_0\}$. By quadratic reciprocity we have that $a$ is not a square at $w_0$. This means, in particular, that $v_1$ splits in $K_{w,w'}$ if and only if $w \neq w'$ or $w=w'=w_0$. Since $k_U \subseteq k_{\fm}$ Properties (1) and (4) together imply that $k_U$ splits completely at $v_1$. Applying Proposition~\ref{p:CT-twist} we now get that $U \subseteq \Sel_2(A^F)$ and that the Cassels-Tate pairing between every two elements $\bet,\bet' \in U$ is the same in $A^F$ and $A$.

Let $T = \{w_0,v_0,v_1\}$. We then have that $W^F_v = W_v$ for every $v \notin T$ and by Lemmas~\ref{l:good-red} and~\ref{l:mult}  we see that $\dim_2(\ovl{W}_{v_0}) = \dim_2(\ovl{W}_{v_1}) = 2g$ and $\dim_2(\ovl{W}_{w_0}) = 1$. By Lemma~\ref{l:mazur-formula} we have that
$$ \dim_2\Sel_2\left(A^F\right) - \dim_2\Sel_2\left(A\right) = \dim_2V^F_T - \dim_2V_T $$
with
$$ \dim_2V^F_T + \dim_2V_T \leq \dim_2(\ovl{W}_{v_0}) + \dim_2(\ovl{W}_{v_1}) +  \dim_2(\ovl{W}_{w_0}) = 4g + 1 $$
and $4g+1 - \dim_2V^F_T - \dim_2V_T \geq 0$ is even. Now the image of $\del(A[2])$ in $V^F_T$ spans a $2g$-dimensional subspace and since $\left<\del(Q_w),Q_{w'}\right>$ is a square at $v_0$ if and only if $w = w'$ we have that the image of $\del(A[2])$ spans a $2g$-dimensional subspace of $V_T$ as well. On the other hand for every $\bet' \in U$ we have $\loc_{v_0}(\bet') = \loc_{v_1}(\bet') = 0$ and $\loc_{w_0}(\bet') \in W_{w_0} \cap W^F_{w_0}$ and so the image of $V_T$ is exactly $2g$. The parity constraint of Lemma~\ref{l:mazur-formula} now forces $\dim_2V^F_T$ to be $2g+1$. Since we saw that $\Sel_2(A^F)$ contains $U$ it now follows that $\Sel_2(A^F)$ is generated by $U$, $\del^F(A[2])$ and one more element $\gam \in \Sel_2(A^F)$. Furthermore, by adding to $\gam$ an element of $\del(A^F[2])$ we may assume that $\gam$ is unramified over $M'$. 
\begin{lem}
There exists an element $\sig \in \Gam_k$ such that $\ovl{\gam}(\sig) = Q_{w_0}$ and such that $\ovl{\bet'}(\sig) = 0$ for every $\bet' \in U$. 
\end{lem}
\begin{proof}
First observe that since $\gam$ is unramified over $M$ we have that $\loc_{w_0}(\gam) \in W_{w_0} \cap W^F_{w_0}$ and since the image of $\gam$ in $V^F_T$ is orthogonal to $V_T$ with respect to~\eqref{e:local-4} we may conclude, in particular, that
$$ \inv_{v_0}\left[\gam \cup_{\lam} \del(P)\right] = \inv_{v_1}\left[\gam \cup_{\lam} \del(P)\right] $$
for every $P \in A[2]$. Using the mutually dual bases $Q_w$ and $P_w$ we may write this equality as
$$ \inv_{v_0}\sum_{w' \in M'}\left[\left<\gam,P_{w'}\right>_{\lambda}\cup \left<\del(P),Q_{w'}\right>_{\lambda}\right] = \inv_{v_1}\sum_{w' \in M'}\left[\left<\gam,P_{w'}\right>_{\lambda}\cup \left<\del(P),Q_{w'}\right>_{\lambda}\right] .$$
Let us now plug in $P = Q_w$ for some $w \in M$. By construction we have that $\left<\del(Q_w),Q_{w'}\right>_{\lambda}$ vanishes at both $v_0$ and $v_1$ whenever $w \neq w'$ and so we obtain
$$ \inv_{v_0}\left[\left<\gam,P_{w}\right>_{\lambda}\cup \left<\del(Q_w),Q_{w}\right>_{\lambda}\right] = \inv_{v_1}\left[\left<\gam,P_{w}\right>_{\lambda}\cup \left<\del(Q_w),Q_{w}\right>_{\lambda}\right] .$$
Now if $w \neq w_0$ then $\left<\del(Q_w),Q_{w}\right>_{\lambda}$ is unramified and non-trivial at both $v_0$ and $v_1$ and so for such $w$ we obtain
\begin{equation}\label{e:val}
\val_{v_0}\left<\gam,P_{w}\right>_{\lambda} = \val_{v_1}\left<\gam,P_{w}\right>_{\lambda} \;\;\Mod\; 2
\end{equation}
while if $w = w_0$ then $\left<\del(Q_w),Q_{w}\right>_{\lambda}$ is unramified and non-trivial at $v_0$ but is trivial at $v_1$, and so we obtain
$$ \val_{v_0}\left<\gam,P_{w_0}\right>_{\lambda} = 0 \;\;\Mod\; 2 $$
%
We now observe that $\val_{v_1}\left<\gam,P_{w_0}\right>_{\lam}$ must be odd. Indeed, otherwise Equation~\eqref{e:val} would hold for all $w \in M'$, and so there would exist a $Q \in A[2]$ such that $\gam' = \gam + \del^F(Q)$ is unramified, and hence trivial, at both $v_0$ and $v_1$.
It would then follow that $\gam'$ satisfies the Selmer condition of $\Sel_2(A)$ at all places except possibly $w_0$. Since the image of $\gam'$ in $V^F_T$ is orthogonal to the image of $\del(Q_{w_0})$ in $V_T$ to~\eqref{e:local-4} we may conclude that $\gam'$ satisfies the Selmer condition of $A$ at $w_0$ as well, i.e., $\gam' \in U \subseteq \Sel_2(A) \cap \Sel_2(A^F)$. But this would imply that $\Sel_2(A^F)$ is generated by $U$ and $\del^F(A[2])$, contradicting the above. We may hence conclude that $\val_{v_1} \left<\gam,P_{w_0}\right>_{\lam}$ is odd and so
$$ \val_{v_0} \left<\gam,P_{w}\right>_{\lam} + \val_{v_1}\left<\gam,P_{w}\right>_{\lam} = \left\{\begin{matrix} 0 \in \ZZ/2 & w \neq w_0 \\ 1 \in \ZZ/2 & w = w_0 \\ 
\end{matrix}\right. .$$
It follows that $\left<\gam,P_{w_0}\right>_{\lam}$ does not belong to the minimal field extension of $k_U$ splitting the classes $\{\left<\gam,P_{w}\right>_{\lam}\}_{w \neq w_0}$. Consequently, there must exist an element $\sig \in \Gam_k$ such that $\sig \in \Gam_{k_U} \subseteq \Gam_k$, such that
$\left<\ovl{\gam}(\sig),P_w\right>_{\lambda}$ is trivial for $w \neq w_0$ while $\left<\ovl{\gam}(\sig),P_{w_0}\right>_{\lambda}$ is non-trivial. In particular, $\ovl{\gam}(\sig) = Q_{w_0}$ and $\ovl{\bet'}(\sig) = 0$ for every $\bet' \in U$. 
\end{proof}

Now recall that we have an element $\bet \in U$ such that $\left<\bet,Q_{w_0}\right>_{\lam}$ and $\left<\alp,Q_{w_0}\right>_{\lam}$ are \textbf{two different non-trivial} classes in $H^1(k,\mu_2)$. It follows that for any two elements $\eps_{\alp},\eps_{\bet} \in \mu_2$ there exists an element $\tau \in \Gam_k$ such that $\left<\ovl{\alp}(\tau),Q_{w_0}\right>_{\lam} = \eps_{\alp}$ and $\left<\ovl{\bet}(\tau),Q_{w_0}\right>_{\lam} = \eps_{\bet}$. For our purposes let us set $\eps_{\alp} = (-1)^{\left<\alp,\gam\right>^{\CT}_{A^F}}$ and $\eps_{\bet} = (-1)^{1-\left<\bet,\gam\right>^{\CT}_{A^F}}$. Let $B \subseteq \Sel_2(A^F)$ be the subgroup generated by $U$ and $\gam$ and let $\rho_{\sig} \wedge \rho_{\tau}: B \times B \lrar \ZZ/2$ be the alternating form constructed above. Since $\rho_{\sig}(\bet') = \ovl{\bet'}(\sig) = 0$ for every $\bet' \in U$ it follows that $\rho_{\sig} \wedge \rho_{\tau}(\bet',\bet'') = 0$ for every $\bet',\bet'' \in U \subseteq B$. On the other hand, since $\rho_{\tau}(\gam) = \ovl{\gam}(\tau) = Q_{w_0}$ we have $\rho_{\sig} \wedge \rho_{\tau}(\bet',\gam) = \log_{(-1)}\left<\ovl{\bet'}(\tau),Q_{w_0}\right>_{\lam}$ for every $\bet' \in U$.
Applying Proposition~\ref{p:red-sel-2} with $B$ the subgroup generated by $U$ and $\gam$ and with the elements $\sig,\tau \in \Gam_k$ constructed above we obtain a quadratic twist $F' = k(\sqrt{a'})$ such that (with $a'' := a'a$ and $F'' := k(\sqrt{a''})$) we have
\begin{enumerate}[(1)]
\item
$\Sel_2(A^{F''})$ contains $U$ and $\gam$ and $\dim_2\Sel_2(A^{F''}) = \dim_2\Sel_2(A^{F})$.
\item
For every $\bet',\bet'' \in U$ we have $\left<\bet',\bet''\right>^{\CT}_{A^{F''}} = \left<\bet',\bet''\right>^{\CT}_{A^F}$.
\item
For every $\bet' \in U$ we have $\left<\bet',\gam\right>^{\CT}_{A^{F''}} = \left<\bet',\gam\right>^{\CT}_{A^{F}} + (\rho_{\sig} \wedge \rho_{\tau})(\bet,\bet')$. In particular $\left<\alp,\gam\right>^{\CT}_{A^{F''}} = 0$ and $\left<\bet,\gam\right>^{\CT}_{A^{F''}} \neq 0$. 
\end{enumerate}
Let $V \subseteq \Sel_2(A^{F''})$ be the subgroup consisting of those elements which are unramified over $M'$, so that we have a direct sum decomposition $\Sel_2(A^{F''}) = V \oplus \del^{F''}(A[2])$. Property (1) above implies that $V$ is generated by $U$ and $\gam \notin U$. Let $V^{\circ} = V \cap \Sel^{\circ}_2(A^{F''})$. Since all the elements of $V^{\circ}$ are in particular orthogonal to $\bet$ with respect to the Cassels-Tate pairing, Properties (2) and (3) above imply that $V^{\circ} \subseteq U$. Since all the elements of $V^{\circ}$ are also orthogonal to $\gam$ with respect to the Cassels-Tate pairing, Properties (2) and (3) further imply that $\bet \notin V^{\circ} \subseteq U$ while $\alp \in V^{\circ}$. This means in particular that Condition~\hyperref[c:B4]{(B4)} holds for $(A^{F''},\alp)$. By Lemma~\ref{l:adm} we have that $(A^{F''},\alp)$ is admissible, i.e., Condition~\hyperref[c:B1]{(B1)} holds as well. Finally, since $\Sel^{\circ}_2(A^{F''})$ is a direct sum of $V^{\circ}$ and the image of the $2$-torsion we may now conclude that $\dim_2\Sel^{\circ}_2(A^{F''}) < \dim_2\Sel^{\circ}(A)$. It follows that the quadratic extension $F''$ has the desired properties and so the proof is complete.

\end{proof}

\subsection{Proof of the main theorem}\label{ss:proof}

In this section we will complete the proof of Theorem~\ref{t:main}. Let $k$ be a number field and let $A_1,...,A_n$ be principally polarized simple abelian varieties over $k$, such that each $A_i$ has all its $2$-torsion defined over $k$. For each $i$, let $M_i \subseteq \Om_k$ be an extended $2$-structure for $A_i$ such that $A_j$ has good reduction over $M_j$ whenever $i \neq j$. Let $A = A_1 \times ... \times A_n$ and let $\alp \in H^1(k,A[2])$ be a non-degenerate element which is unramified over $M = \cup_i M_i$ but which has a non-trivial image in $H^1(k_w,A[2])$ for each $w \in M$. We may uniquely write $\alp = \sum_i \alp_i$ with $\alp_i \in H^1(k,A_i[2])$. Let $Y_i$ be the $2$-covering of $A_i$ determined by $\alp_i$ so that $Y = \prod_i Y_i$ is the $2$-covering of $A$ determined by $\alp$. Finally, let $X = \Kum(Y)$ be the associated Kummer surface.

\begin{proof}[Proof of Theorem~\ref{t:main}]
To prove that Conjecture~\ref{c:kummer} holds for $X$, let us assume that the $2$-primary Brauer-Manin obstruction to the Hasse principle is the only one for each $Y^F$, i.e., that $[Y^F] \in H^1(k,A)$ is not a non-trivial divisible element of $\Sha(A^F)$ for any $F$. Since $H^1(k,A) = \oplus_k H^1(k,A_i)$ and $\Sha(A) = \oplus_i \Sha(A_i)$ this is equivalent to saying that $[Y_i^F] \in H^1(k,A_i)$ is not a non-trivial divisible element of $\Sha(A_i^F)$ for any $F$.

In light of Lemma~\ref{l:adm} we may, by possibly replacing $A$ by a quadratic twist, assume that $(A,\alp)$ is admissible. Applying Proposition~\ref{p:fib} we may find a quadratic extension $F/k$, unramified over $M$, such that $(A^{F},\alp)$ satisfies Conditions~\hyperref[c:A1]{(A1)} and~\hyperref[c:A2]{(A2)} above. Replacing $A$ with $A^{F}$ we may assume that Conditions~\hyperref[c:A1]{(A1)} and~\hyperref[c:A2]{(A2)} already hold for $(A,\alp)$. By repeated applications of Proposition~\ref{p:red-sel} we may find a quadratic extension $F'/k$, unramified over $M$, such that $(A^{F'},\alp)$ satisfies Conditions~\hyperref[c:B1]{(B1)} and~\hyperref[c:B4]{(B4)} above. Replacing $A$ with $A^{F'}$ we may assume that Conditions~\hyperref[c:B1]{(B1)} and~\hyperref[c:B2]{(B2)} already hold for $(A,\alp)$. By repeated applications of Proposition~\ref{p:red-sel-3} we may find a quadratic extension $F''/k$, unramified over $M$, and such that the subgroup $\Sel^{\circ}_2(A^{F''}) \subseteq \Sel_2(A^{F''})$ consisting of those elements which are orthogonal to all of $\Sel_2(A^{F''})$ with respect to the Cassels-Tate pairing is generated by $\alp_1,...,\alp_n$ and the image of the $2$-torsion. It then follows that $\Sel^{\circ}_2(A_i^{F''})$ is generated by $\alp_i$ and the image of the $2$-torsion. Let $\Sha^{\circ}(A_i^{F''}) \subseteq \Sha(A_i^{F''})$ be the subgroup orthogonal to all of $\Sha(A_i^{F''})[2]$ with respect to the Cassels-Tate pairing. Then we may conclude that $\Sha^{\circ}(A_i^{F''})$ is generated by the image of $\alp_i$, i.e., by the class $[Y_i^{F''}]$ of $Y_i^{F''}$. Since we assumed that $[Y_i^{F''}]$ is not a non-trivial divisible element it now follows that $\Sha(A_i^{F''})\{2\}$ is finite. The Cassels-Tate pairing induces a non-degenerate self pairing of $\Sha(A_i^{F''})\{2\}$, which is alternating in our case by Remark~\ref{r:alternating}. This means, in particular, that if we write the abstract abelian group $\Sha(A_i^{F''})\{2\}$ as a direct sum $\oplus_i \ZZ/2^{n_i}$ of cyclic groups then for each $n$ it will have an even number of $\ZZ/2^n$ components. Now the multiplication by $2$ map induces an isomorphism $\Sha(A_i^{F''})[4]/\Sha(A_i^{F''})[2] \cong \Sha^{\circ}(A_i^{F''})$ and so by the above we may conclude that the $2$-rank of $\Sha^{\circ}(A_i^{F''})$ is even. Since it generated by a single element it must therefore vanish, implying that $[Y^{F''}] = 0$. This means that $Y^{F''}$ has a rational point and so $X$ has a rational points as well, as desired.
\end{proof}

\end{document}